\newtheorem{thm}{Theorem}[section]
\newtheorem{prop}[thm]{Proposition}
\newtheorem{lemma}[thm]{Lemma}
\newtheorem{cor}[thm]{Corollary}
\newtheorem{conj}[thm]{Conjecture}
\newtheorem{fact}[thm]{Fact}
\theoremstyle{definition}
\newtheorem{defi}[thm]{Definition}
\newcommand{\ma}{m^{(A)}}
\newcommand{\mb}{m^{(B)}}
\newcommand{\si}{\sigma}
\newcommand{\C}{C^{-1}}
\newcommand{\lb}{[\![}
\newcommand{\rb}{]\!]}
\title[Polynomials counting FPL configurations]{On some polynomials enumerating Fully Packed Loop configurations}
\author{Tiago Fonseca}
\address{LPTHE (CNRS, UMR 7589), Universit\'e Pierre et Marie Curie-- PARIS 6, 75252 PARIS CEDEX}
\email{fonseca@lpthe.jussieu.fr}
\author{Philippe Nadeau}
\address{Fakult\"at f\"ur Mathematik, Universit\"at Wien, Nordbergstra{\ss}e 15, A-1090 WIEN, AUSTRIA. }
\email{philippe.nadeau@univie.ac.at}
\begin{document}

\tikzstyle{arche} = [red, thick]
\tikzstyle{line} = [black, semithick]
\tikzstyle{dyck} = [black, thick]
\tikzstyle{young} = [black, semithick]

\begin{abstract}
We are interested in the enumeration of Fully Packed Loop configurations on a grid with a given noncrossing matching. By the recently proved Razumov--Stroganov conjecture, these quantities also appear as groundstate components in the Completely Packed Loop model.

 When considering matchings with $p$ nested arches, these numbers are known to be polynomials in $p$. In this article, we present several conjectures about these polynomials: in particular, we describe all real roots, certain values of these polynomials, and conjecture that the coefficients are positive. The conjectures, which are of a combinatorial nature, are supported by strong numerical evidence and the proofs of several special cases. We also give a version of the conjectures when an extra parameter $\tau$ is added to the equations defining the groundstate of the Completely Packed Loop model.
\end{abstract}

\maketitle


 \section*{Introduction}

The recently proved Razumov--Stroganov conjecture~\cite{RS-conj,ProofRS} is a correspondence between, on the one hand, combinatorially defined quantities called Fully Packed Loop (FPL) configurations, and on the other hand, components of the groundstate vector of the Hamiltonian in the Completely Packed Loop model. These quantities are indexed by noncrossing, perfect matchings $\pi$ of $2n$ points (cf. definition in Section~\ref{representations}).The number of FPL configurations with associated matching $\pi$ will be denoted $A_\pi$, while the corresponding components of the groundstate vector in the Completely Packed Loop model are written $\Psi_\pi$. The  Razumov--Stroganov conjecture states then that $A_\pi=\Psi_\pi$ for any $\pi$. 

The goal of this article is to exhibit some surprising properties of these numbers when one studies matchings with nested arches $(\pi)_p=(\cdots(\pi)\cdots)$, which means that there are $p$ nested arches above the matching $\pi$. It was conjectured in ~\cite{Zuber-conj}, and subsequently proved in~\cite{CKLN,artic47}, that the quantities $A_{(\pi)_p}$ and $ \Psi_{(\pi)_p}$ are polynomial in $p$. We define then the polynomial $A_\pi(t)$ such that $A_\pi(p)=A_{(\pi)_p}$ when $p$ is a nonnegative integer.
\medskip

This paper deals with certain conjectures about these polynomials. Let $\pi$ be a matching with $n$ arches: the main conjectures deal with the description of real roots of the polynomials (Conjecture~\ref{conj:realroots}), their values at negative integers between $1-n$ and $-1$ (Conjecture~\ref{conj:dec}), evaluations at $-n$ (Conjecture~\ref{conj:gpi})  and finally the positivity of the coefficients (Conjecture~\ref{conj:posX}). 
We gather some evidence for the conjectures, and prove some special cases (cf. Theorem ~\ref{th:subleading} and Theorem~\ref{th:firstroot}). In the Completely Packed Loop model, one can in fact define bivariate polynomials $\Psi(\tau,t)$ that coincide with $\Psi(t)$ at $\tau=1$; it turns out that most of our conjectures admit a natural generalization in this context also, which in some sense is more evidence for the original conjectures.

 We believe these conjectures can help us understand better the numbers $A_\pi$. Moreover, our work on these conjectures has some interesting byproducts: first, the conjectured root multiplicities of the polynomials $A_\pi(t)$ have nice combinatorial descriptions in terms of $\pi$ (see Section~\ref{sub:combdef}). Then, from the proof of Theorem ~\ref{th:subleading}, we deduce some nice formulas about products of hook lengths of partitions (Proposition~\ref{prop:newhookformulas}). Also, the proof of Theorem~\ref{th:firstroot} involves the introduction of a new multivariate integral.
 \medskip

Let us give a detailed outline of this article, where $\pi$ will refer to a matching with $n$ arches. In Section~\ref{sec:defi}, we define the quantities $A_\pi$ and $\Psi_\pi$, and formulate the Razumov--Stroganov conjecture. We introduce in Section~\ref{sec:polynomials} the central objects of our study, the polynomials $A_\pi(t)$. It is also recalled how to approach the computation of these polynomials.

The main conjectures about the $A_\pi(t)$ are gathered in Section~\ref{sec:conj}: they are Conjectures~\ref{conj:realroots}, ~\ref{conj:dec},~\ref{conj:gpi} and ~\ref{conj:posX}. We give also numerous evidence for these conjectures, the most important one being perhaps that they have been checked for all matchings with $n\leq 8$.

The next two sections address particular cases of some of the conjectures: in Section~\ref{sec:subleading}, we are concerned with the computation of the subleading term of the polynomials. The main result, Theorem~\ref{th:subleading}, shows that this is a positive number both for $A_\pi(t)$; it is thus a special case of Conjecture~\ref{conj:posX}. We give two proofs of this result, from which we derive some nice formulas mixing hook lengths and contents of partitions (Proposition \ref{prop:newhookformulas}). Section~\ref{sec:firstroot} is concerned with the proof that if $\{1,2n\}$ is not an arch in $\pi$, then  $A_\pi(-1)=0$; this is a special case of Conjecture~\ref{conj:realroots}. The proof relies on the multivariate polynomial extension of $\Psi_\pi$, the main properties of which are recalled briefly. 

 Section~\ref{sec:tau} deals with certain bivariate polynomials $\Psi_\pi(\tau,t)$ which specialize to $A_\pi(t)$ when $\tau=1$. It turns out that the conjectures of Section~\ref{sec:conj} generalize in a very satisfying way. We finally give two appendices: Appendix~\ref{app:equivab} gives a proof of the technical result in Theorem~\ref{th:equivab}, while Appendix~\ref{app:examples} lists some data on the polynomials $A_\pi(t)$.


\section{Definitions}
\label{sec:defi}

We first introduce matchings and different notions related to them. We then describe Fully Packed Loop configurations, as well as the Completely Packed Loop model.

\subsection{Matchings}
\label{representations}
 A matching\footnote{our matchings are usually called {\em perfect noncrossing matchings} in the literature, but this is the only kind of matchings we will encounter so there will be no possible confusion.} $\pi$ of size $n$ is defined as a set of $n$ disjoint pairs of integers $\{1,\ldots,2n\}$, which are {\em noncrossing} in the sense that if $\{i,j\}$ and $\{k,l\}$ are two pairs in $\pi$ with $i<j$ and $k<l$, then it is forbidden to have $i<k<j<l$ or $k<i<l<j$. We will represent matchings by sets of arches on $2n$ horizontally aligned points labeled from $1$ to $2n$. There are $\frac{1}{n+1}\binom{2n}{n}$ matchings with $n$ pairs, which is the famous $n$th Catalan number.
Matchings can be represented by other equivalent objects:

\begin{itemize}
 \item A well-formed sequence of parentheses, also called \emph{parenthesis word}. Given an arch in a matching, the point connected to the left (respectively to the right) is encoded by an opening parenthesis (resp. by a closing parenthesis);
\[
 \begin{tikzpicture}[scale=0.25]
  \draw[arche] (0,0) .. controls (0,.5) and (1,.5) .. (1,0);
  \draw[arche] (2,0) .. controls (2,1.5) and (5,1.5) .. (5,0); 
  \draw[arche] (3,0) .. controls (3,.5) and (4,.5) .. (4,0);
  \draw[line] (-.5,0) -- (5.5,0);
 \end{tikzpicture}
\Leftrightarrow ()(())
\]

 \item A Dyck Path, which is a path between $(0,0)$ and $(2n,0)$ with steps NE $(1,1)$ and SE $(1,-1)$ that never goes under the horizontal line $y=0$. An opening parenthesis corresponds to a NE step, and a closing one to a SE step;
\[
 ()(()) \Leftrightarrow
 \begin{tikzpicture}[scale=0.25, baseline=2pt]
  \draw[dyck] (0,0) -- (1,1) -- (2,0) -- (3,1) -- (4,2) -- (5,1) -- (6,0);
 \end{tikzpicture}
\]

 \item A Young diagram is a collection of boxes, arranged in left-justified rows, such that the size of the rows is weakly decreasing from top to bottom. Matchings with $n$ arches are in bijection with Young diagrams such that the $i$th row from the top has no more than $n-i$ boxes. The Young diagram can be constructed as the complement of a Dyck path, rotated $45^\circ$ counterclockwise;
\[
 \begin{tikzpicture}[scale=0.25, baseline=3pt]
   \draw[dyck] (0,0) -- (1,1) -- (2,0) -- (3,1) -- (4,2) -- (5,1) -- (6,0);
   \draw[young, dotted] (1,1) -- (3,3);
   \draw[young, dotted] (2,0) -- (4,2);
   \draw[young, dotted] (1,1) -- (2,0);
   \draw[young, dotted] (2,2) -- (3,1);
   \draw[young, dotted] (3,3) -- (4,2);
 \end{tikzpicture}
\Leftrightarrow
 \begin{tikzpicture}[scale=0.25, baseline=-10pt]
   \draw[young] (0,0) -- (0,-2);
   \draw[young] (1,0) -- (1,-2);
   \draw[young] (0,0) -- (1,0);
   \draw[young] (0,-1) -- (1,-1);
   \draw[young] (0,-2) -- (1,-2);
 \end{tikzpicture}
\]

\item A sequence $a=\{a_1,\ldots,a_n\}\subseteq\{1,\ldots,2n\}$, such that $a_{i-1}<a_i$ and $a_i\leq 2i-1$ for all $i$. Here $a_i$ is the position of the $i$th opening parenthesis.
\[
 ()(()) \Leftrightarrow \{1,3,4\}
\]
\end{itemize}

We will often identify matchings under those different representations, through the bijections explained above. We may need at times to stress a particular representation: thus we write $Y(\pi)$ for the Young diagram associated to $\pi$, and $a(\pi)$ for the increasing sequence associated to $\pi$, etc...

We will represent $p$ nested arches around a matching $\pi$  by ``$(\pi)_p$'', and $p$ consecutive small arches by ``$()^p$''; thus for instance 
\[
((((()()))))()()()=(()^2)_4()^3.
\]

We define a {\em partial order} on matchings as follows: $\si \leq \pi$ if the Young diagram of $\pi$ contains the Young diagram of $\si$, that is $Y(\si)\subseteq Y(\pi)$. In the Dyck path representation, this means that the path corresponding to $\si$ is always weakly above the path corresponding to $\pi$; in the sequence representation, if we write $a=a(\si)$ and $a'=a(\pi)$, then this is simply expressed by $a_i\leq a'_i$ for all $i$.

Given a matching $\pi$, we define $d(\pi)$ as the total number of boxes in the Young diagram $Y(\pi)$. We also let $\pi^*$ be the conjugate matching of $\pi$, defined by: $\{i,j\}$ is an arch in $\pi^*$ if and only if $\{2n+1-j,2n+1-i\}$ is an arch in $\pi$. This corresponds to a mirror symmetry of the parenthesis word, and a transposition in the Young diagram. We also define a natural {\em rotation} $r$ on matchings: $i,j$ are linked by an arch in $r(\pi)$ if and only if $i+1,j+1$ are linked in $\pi$ (where indices are taken modulo $2n$). These last two notions are illustrated on Figure~\ref{fig:matchings}.

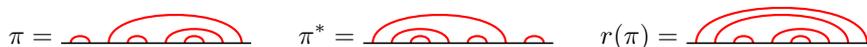
\begin{figure}[!ht]
\begin{align*}
\pi&=
 \begin{tikzpicture}[scale=0.25]
  \draw[arche] (0,0) .. controls (0,.5) and (1,.5) .. (1,0);
  \draw[arche] (2,0) .. controls (2,2) and (9,2) .. (9,0); 
  \draw[arche] (3,0) .. controls (3,.5) and (4,.5) .. (4,0);
  \draw[arche] (5,0) .. controls (5,1) and (8,1) .. (8,0);
  \draw[arche] (6,0) .. controls (6,.5) and (7,.5) .. (7,0);
  \draw[line] (-.5,0) -- (9.5,0);
 \end{tikzpicture}
 &
 \pi^*&=
 \begin{tikzpicture}[scale=0.25]
  \draw[arche] (9,0) .. controls (9,.5) and (8,.5) .. (8,0);
  \draw[arche] (7,0) .. controls (7,2) and (0,2) .. (0,0); 
  \draw[arche] (6,0) .. controls (6,.5) and (5,.5) .. (5,0);
  \draw[arche] (4,0) .. controls (4,1) and (1,1) .. (1,0);
  \draw[arche] (3,0) .. controls (3,.5) and (2,.5) .. (2,0);
  \draw[line] (-.5,0) -- (9.5,0);
 \end{tikzpicture}
 &
 r(\pi)&=
 \begin{tikzpicture}[scale=0.25]
  \draw[arche] (0,0) .. controls (0,2.5) and (9,2.5) .. (9,0);
  \draw[arche] (1,0) .. controls (1,2) and (8,2) .. (8,0); 
  \draw[arche] (2,0) .. controls (2,.5) and (3,.5) .. (3,0);
  \draw[arche] (4,0) .. controls (4,1) and (7,1) .. (7,0);
  \draw[arche] (5,0) .. controls (5,.5) and (6,.5) .. (6,0);
  \draw[line] (-.5,0) -- (9.5,0);
 \end{tikzpicture}
\end{align*}
\caption{A matching, its conjugate, and the rotated matching.\label{fig:matchings}}
\end{figure}

We need additional notions related to the Young diagram representation. So let $Y$ be a young diagram, and $u$ one of its boxes. The {\em hook length} $h(u)$ is the number of boxes below $u$ in the same column, or to its right in the same row (including the box $u$ itself). We note $H_Y$ the product of all hook lengths, i.e. $H_Y=\prod_{u\in Y} h(u)$. The {\em content} $c(u)$ is given by $y-x$ if $u$ is located in the $x$th row from the top and the $y$th column from the left; we write $u=(x,y)$ in this case. The {\em rim} of $Y$ consists of all boxes of $Y$ which are on its southeast boundary; removing the rim of a partition leaves another partition, and repeating this operation until the partition is empty gives us the {\em rim decomposition} of $Y$.

\subsection{Fully Packed Loops} 
\label{sub:FPLintro}

 A {\emph Fully Packed Loop configuration} (FPL) of size $n$ is a subgraph of the square grid with $n^2$ vertices, such that each vertex is connected to exactly two edges. We furthermore impose the following boundary conditions: the grid is assumed to have n external edges on each side, and we select alternatively every second of these edges to be part of our FPLs. By convention, we fix that the topmost external edge on the left boundary is part of the selected edges, which fixes thus the entire boundary of our FPLs. We number these external edges counterclockwise from $1$ to $2n$, see Figure~\ref{fig:fplexample}.
\begin{figure}[!ht]
 \begin{center}
\includegraphics[page=5,width=0.7\textwidth]{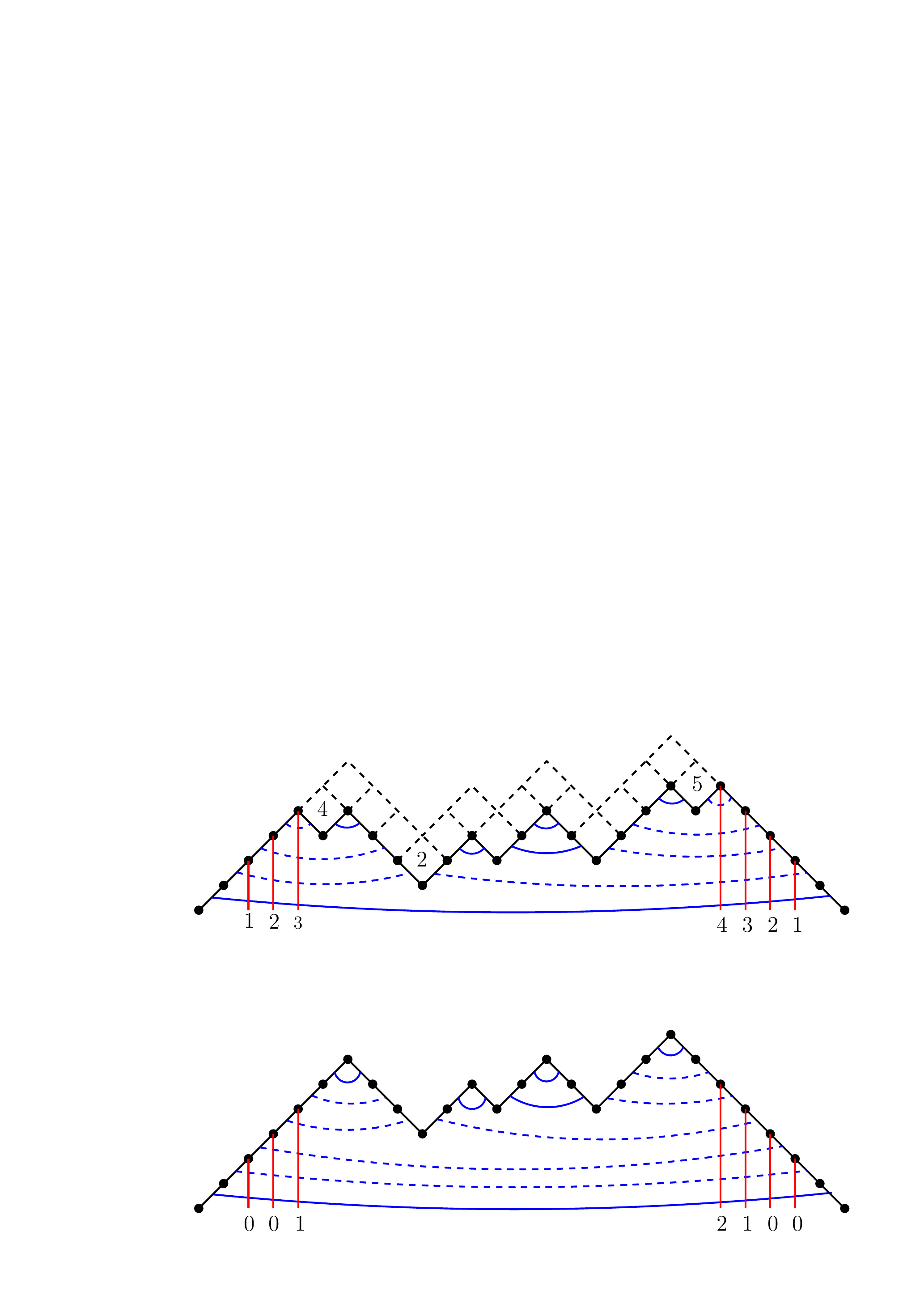}
\end{center}
\caption{FPL with its associated matching \label{fig:fplexample}}
\end{figure}

In each FPL configuration $F$ the chosen external edges are clearly linked by paths which  do not cross each other. We define $\pi(F)$ as the set of pairs $\{i,j\}$ of integers in $\{1,\ldots,2n\}$ such that the external edges labeled $i$ and $j$ are linked by a path in $F$. Then $\pi(F)$ is a matching in the sense of Section~\ref{representations}; an example is given on the right of  Figure~\ref{fig:fplexample}. 

\begin{defi}[$A_\pi$]
 For any matching $\pi$, we define $A_\pi$ as the number of FPLs $F$ such that $\pi(F)=\pi$.
\end{defi}

A result of Wieland~\cite{wieland} shows that a rotation on matchings leaves the numbers $A_\pi$ invariant, and it is then easily seen that conjugation of matchings also leaves them invariant:

\begin{thm}[\cite{wieland}]
\label{thm:invar_api}
For any matching $\pi$, we have $A_\pi=A_{r(\pi)}$ and $A_\pi=A_{\pi^*}$.
\end{thm}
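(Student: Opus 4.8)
The plan is to prove the rotation invariance $A_\pi = A_{r(\pi)}$ by exhibiting a bijection, due to Wieland, between FPL configurations $F$ with $\pi(F)=\pi$ and FPL configurations $F'$ with $\pi(F')=r(\pi)$; the conjugation statement $A_\pi=A_{\pi^*}$ will then follow by an easy symmetry argument. First I would set up the relevant combinatorial gadget, the \emph{gyration} operation on FPL configurations. Recall that an FPL of size $n$ lives on the $n\times n$ grid; one two-colours the unit faces of the grid like a checkerboard, so that the faces split into ``even'' and ``odd'' classes. Gyration acts locally on each face: around a given face, the four incident edges of the grid either form one of two complementary ``opposite-pair'' patterns, or some other pattern; when they form one of the two opposite-pair patterns the move toggles it to the other, and otherwise it does nothing. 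One first applies this toggle simultaneously to all even faces, then to all odd faces. The key structural fact to establish (or to quote from~\cite{wieland}) is that gyration sends FPL configurations to FPL configurations, i.e. the degree-two condition at every vertex is preserved, and that on the boundary it has the effect of a cyclic shift of the selected external edges.

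The heart of the argument is the claim that if $G$ denotes the gyration map then $\pi(G(F)) = r^{\pm 1}(\pi(F))$ for every FPL $F$. I would prove this by tracking how the connectivity of the external endpoints changes under a single face toggle and then composing. The clean way to see it is to use the well-known reformulation of FPL connectivity in terms of the link pattern read off by following the loops: each local toggle at a face either leaves all connections unchanged, or performs an elementary ``switch'' that can be checked case-by-case to be compatible with a global relabelling of the boundary points by a rotation. Here the noncrossing property of matchings is what guarantees that these local switches assemble into a single consistent cyclic shift rather than something more complicated. Once $\pi\circ G = r\circ\pi$ is known, bijectivity of $G$ (it is its own inverse up to swapping the roles of even and odd faces, hence certainly invertible) immediately yields $A_\pi = A_{r(\pi)}$, since $G$ restricts to a bijection from $\{F : \pi(F)=\pi\}$ onto $\{F' : \pi(F')=r(\pi)\}$.

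For the conjugation statement, I would argue as follows. Reflecting the whole $n\times n$ grid across a suitable axis (say the vertical one) is an involution on FPL configurations, and on the boundary it reverses the cyclic order of the external edges; one checks from the convention fixing edge $1$ that, possibly after composing with a rotation, this reflection induces exactly the map $\{i,j\}\mapsto\{2n+1-j,\,2n+1-i\}$ on matchings, i.e. $\pi\mapsto\pi^*$. Since reflection is a bijection on FPL configurations and rotation invariance is already established, this gives $A_\pi = A_{\pi^*}$.

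The main obstacle is the verification that gyration conjugates the matching map with rotation: this is the substantive content of Wieland's theorem and requires a careful case analysis of the local moves together with a global argument that the local boundary effects cohere into a single rotation. Since the statement is attributed to~\cite{wieland} in the excerpt, in the paper itself one may simply cite that reference for the rotation invariance and then spell out only the short reflection argument deducing $A_\pi=A_{\pi^*}$; a self-contained proof would, however, need to reproduce the gyration analysis sketched above.
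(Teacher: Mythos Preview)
Your proposal is correct and matches the paper's treatment: the paper does not give a proof at all, but simply cites Wieland for the rotation invariance and remarks that the conjugation invariance ``is then easily seen.'' Your reflection-plus-rotation deduction of $A_\pi=A_{\pi^*}$ is exactly the short argument the paper has in mind, and your final paragraph already anticipates that in context one would just cite~\cite{wieland}.

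Two minor technical points about your gyration sketch, in case you want it self-contained. First, the local toggle is slightly more general than swapping two ``opposite-pair'' patterns: the correct rule is that, fixing the edges \emph{outside} a given face, one toggles between the (at most two) completions of the four boundary edges consistent with the degree-$2$ condition at each corner. Second, the noncrossing property of the matching is not what makes the local moves cohere into a global rotation; noncrossingness is automatic for any FPL by planarity. The real mechanism is that the two-stage toggle (one colour class of faces, then the other) shifts the set of occupied external edges by one step, and a path-deformation argument then shows the induced map on endpoints is exactly this cyclic shift. These are refinements rather than gaps, and your caveat that the full verification needs care or citation is well placed.
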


 Now we let $A_n$ be the total number of FPLs of size $n$; by definition we have $A_n=\sum_\pi A_\pi$ where $\pi$ goes through all matchings with $n$ arches. We also define $A_{n}^V$ as the number of FPLs of size $n$ which are invariant with respect to vertical symmetry. It is easily seen that $A_{2n}^V=0$. We have the famous product expressions of these quantities:
\begin{align}
 A_n&=\prod_{k=0}^{n-1} \frac{(3k+1)!}{(n+k)!}; \\
A_{2n+1}^V&= \frac{1}{2^n}\prod_{k=1}^n\frac{(6k-2)!(2k-1)!}{(4k-1)!(4k-2)!}.
\end{align}

The original proofs can be found in~\cite{Zeil-ASM,Kup-ASM} for $A_n$, and~\cite{MR1954236} for $A_{n}^V$.

\subsection{Completely Packed Loop model}
\label{sub:O1}

In this subsection we explain briefly the Completely Packed Loop Model (CPL) with periodic boundary conditions; for more details see~\cite{artic47, hdr, dG-review}. Let $n$ be an integer, and define a {\em state} as a column vector indexed by matchings of size $n$.

Let $e_i$ be the operator on matchings which creates a new arch at $(i,i+1)$, and join the vertices formerly linked to $i$ and $i+1$, as shown in the following examples:
\begin{align*}
 e_3 
\begin{tikzpicture}[scale=0.25, baseline=-3pt]
 \draw[arche] (0,0) .. controls (0,.5) and (1,.5) .. (1,0);
 \draw[arche] (2,0) .. controls (2,1.5) and (5,1.5) .. (5,0); 
 \draw[arche] (3,0) .. controls (3,.5) and (4,.5) .. (4,0);
 \draw[line] (-.5,0) -- (5.5,0);
\end{tikzpicture} = 
\begin{tikzpicture}[scale=0.25, baseline=-3pt]
 \draw[arche] (0,0) .. controls (0,.5) and (1,.5) .. (1,0);
 \draw[arche] (2,0) .. controls (2,1.5) and (5,1.5) .. (5,0); 
 \draw[arche] (3,0) .. controls (3,.5) and (4,.5) .. (4,0);
 \draw[line] (-.5,0) -- (5.5,0);
 \draw[arche] (0,0) -- (0,-1);
 \draw[arche] (1,0) -- (1,-1);
 \draw[arche] (2,0) .. controls (2,-.5) and (3,-.5) .. (3,0); 
 \draw[arche] (2,-1) .. controls (2,-.5) and (3,-.5) .. (3,-1); 
 \draw[arche] (4,0) -- (4,-1);
 \draw[arche] (5,0) -- (5,-1);
 \draw[line] (-.5,-1) -- (5.5,-1);
\end{tikzpicture} &= 
\begin{tikzpicture}[scale=0.25, baseline=-3pt]
 \draw[arche] (0,0) .. controls (0,.5) and (1,.5) .. (1,0);
 \draw[arche] (2,0) .. controls (2,.5) and (3,.5) .. (3,0); 
 \draw[arche] (4,0) .. controls (4,.5) and (5,.5) .. (5,0);
 \draw[line] (-.5,0) -- (5.5,0);
\end{tikzpicture}\\
 e_4
\begin{tikzpicture}[scale=0.25, baseline=-3pt]
 \draw[arche] (0,0) .. controls (0,.5) and (1,.5) .. (1,0);
 \draw[arche] (2,0) .. controls (2,1.5) and (5,1.5) .. (5,0); 
 \draw[arche] (3,0) .. controls (3,.5) and (4,.5) .. (4,0);
 \draw[line] (-.5,0) -- (5.5,0);
\end{tikzpicture} = 
\begin{tikzpicture}[scale=0.25, baseline=-3pt]
 \draw[arche] (0,0) .. controls (0,.5) and (1,.5) .. (1,0);
 \draw[arche] (2,0) .. controls (2,1.5) and (5,1.5) .. (5,0); 
 \draw[arche] (3,0) .. controls (3,.5) and (4,.5) .. (4,0);
 \draw[line] (-.5,0) -- (5.5,0);
 \draw[arche] (0,0) -- (0,-1);
 \draw[arche] (1,0) -- (1,-1);
 \draw[arche] (3,0) .. controls (3,-.5) and (4,-.5) .. (4,0); 
 \draw[arche] (3,-1) .. controls (3,-.5) and (4,-.5) .. (4,-1); 
 \draw[arche] (2,0) -- (2,-1);
 \draw[arche] (5,0) -- (5,-1);
 \draw[line] (-.5,-1) -- (5.5,-1);
\end{tikzpicture} &=
\begin{tikzpicture}[scale=0.25, baseline=-3pt]
 \draw[arche] (0,0) .. controls (0,.5) and (1,.5) .. (1,0);
 \draw[arche] (2,0) .. controls (2,1.5) and (5,1.5) .. (5,0); 
 \draw[arche] (3,0) .. controls (3,.5) and (4,.5) .. (4,0);
 \draw[line] (-.5,0) -- (5.5,0);
\end{tikzpicture}
\end{align*}
The operator $e_0$ creates an arch linking the positions 1 and 2n. Attached to these operators is the {\em Hamiltonian}
\[
 \mathcal{H}_{2n}=\sum_{i=0}^{2n-1} (1-e_i),
\]
where $1$ is the identity.  $\mathcal{H}_{2n}$ acts naturally on states, and the groundstate $(\Psi_\pi)_{\pi:|\pi|=n}$ attached to $\mathcal{H}_{2n}$ is defined as follows:

\begin{defi}[$\Psi_\pi$]
\label{defi:psipi}
Let $n$ be a positive integer. We define the groundstate in the Completely Packed Loop model as the  vector $\Psi=(\Psi_\pi)_{\pi:|\pi|=n}$ which is the solution of $\mathcal{H}_{2n}\Psi=0$, normalized by $\Psi_{()_n}=1$.
\end{defi}

By the Perron-Frobenius theorem, this is well defined. We have then the followings properties:

\begin{thm}
\label{th:propPsipi}
 Let $n$ be a positive integer.
\begin{itemize}
\item For any $\pi$,  $\Psi_{r(\pi)}=\Psi_{\pi^*}=\Psi_{\pi}$.
\item The numbers $\Psi_\pi$ are positive integers.
\item $\sum_\pi \Psi_\pi = A_n$, where the sum is over matchings such that $|\pi|=n$.
\end{itemize}
\end{thm}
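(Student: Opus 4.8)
The plan is to obtain all three items from the defining relation $\mathcal{H}_{2n}\Psi=0$ together with the Perron--Frobenius theorem, reducing the last two statements to results already quoted.

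First I would set up the linear algebra underlying all three parts. Each $e_i$ sends every matching to a single matching, so in the basis of matchings it is a $0$--$1$ matrix whose columns sum to $1$; hence $\mathbf{1}^{\mathrm{T}}e_i=\mathbf{1}^{\mathrm{T}}$ and therefore $\mathbf{1}^{\mathrm{T}}\mathcal{H}_{2n}=0$. For $c$ large enough (say $c=2n$) the matrix $M:=cI-\mathcal{H}_{2n}$ has nonnegative entries, and it is irreducible since the digraph on matchings with an edge $\pi\to e_i\pi$ for each $i$ is strongly connected (a standard fact, which is precisely what makes Perron--Frobenius applicable in Definition~\ref{defi:psipi}). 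As $\mathbf{1}>0$ is a left eigenvector of $M$ with eigenvalue $c$, Perron--Frobenius forces $c=\rho(M)$ to be a simple eigenvalue whose right eigenspace is spanned by a strictly positive vector; that eigenspace is exactly $\ker\mathcal{H}_{2n}$. So $\ker\mathcal{H}_{2n}$ is a line and $\Psi$, the vector in it with $\Psi_{()_n}=1$, has all entries strictly positive.

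For the symmetry statement I would check, from the combinatorial definitions in Section~\ref{sub:O1}, how the permutation matrices $R$ and $S$ of the maps $r$ and $\pi\mapsto\pi^{*}$ act on the generators: conjugation by $R$ cyclically permutes the family $\{e_0,\dots,e_{2n-1}\}$, and conjugation by $S$ reverses it. Either way $\mathcal{H}_{2n}=\sum_{i}(1-e_i)$ is unchanged, so $R\Psi$ and $S\Psi$ again lie in the line $\ker\mathcal{H}_{2n}$; thus $R\Psi=\lambda\Psi$ and $S\Psi=\mu\Psi$ for scalars $\lambda,\mu$. Since $R$ and $S$ merely permute the positive coordinates of $\Psi$, we have $\lambda,\mu>0$, and $R^{2n}=S^{2}=I$ forces $\lambda=\mu=1$, which is exactly $\Psi_{r(\pi)}=\Psi_{\pi^{*}}=\Psi_{\pi}$.

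It remains to upgrade strict positivity to ``positive integer'' and to prove the sum rule, and here I would lean on the Razumov--Stroganov theorem from the introduction, which gives $\Psi_\pi=A_\pi$: the right-hand side is a nonnegative integer by definition, so with the positivity just established $\Psi_\pi=A_\pi\ge 1$; and $\sum_\pi\Psi_\pi=\sum_\pi A_\pi=A_n$ by the definition of $A_n$. (Alternatively, both facts can be proved purely on the CPL side, without Razumov--Stroganov and in fact predating its proof: integrality of the normalized groundstate from the polynomial solution of the quantum Knizhnik--Zamolodchikov equation at the relevant root of unity, and the sum rule by showing $\sum_\pi\Psi_\pi$ obeys the same recursion as $A_n$.) The main obstacle lies precisely in these last two inputs, which are not elementary and which I would quote rather than reprove; the symmetry and positivity parts, by contrast, are a routine application of Perron--Frobenius once the matrix $M$ is in hand.
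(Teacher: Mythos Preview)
Your proposal is correct and matches the paper's approach. The paper does not give a detailed proof of this theorem: it simply says the rotation and conjugation invariance are ``clear from the symmetry of the problem'' (your Perron--Frobenius argument makes this explicit), cites \cite{artic43} for integrality and \cite{artic31} for the sum rule, and then remarks---exactly as you do---that all three items are now corollaries of the Razumov--Stroganov theorem.
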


The stability by rotation and conjugation is clear from the symmetry of the problem. The integral property was proved in~\cite[Section 4.4]{artic43}, while the sum rule was proved in~\cite{artic31}. The computation of this groundstate has received a lot of interest, mainly because of the Razumov--Stroganov (ex-)conjecture.

\subsection{The Razumov--Stroganov conjecture}

A simple computation shows that
\begin{align*}
 \Psi_{
 \begin{tikzpicture}[scale=0.15]
   \draw[arche] (0,0) .. controls (0,.5) and (1,.5) .. (1,0);
   \draw[arche] (2,0) .. controls (2,.5) and (3,.5) .. (3,0); 
   \draw[arche] (4,0) .. controls (4,.5) and (5,.5) .. (5,0);
 \end{tikzpicture}}
&=2&
 \Psi_{
 \begin{tikzpicture}[scale=0.15]
   \draw[arche] (0,0) .. controls (0,1.5) and (5,1.5) .. (5,0);
   \draw[arche] (1,0) .. controls (1,.5) and (2,.5) .. (2,0); 
   \draw[arche] (3,0) .. controls (3,.5) and (4,.5) .. (4,0);
 \end{tikzpicture}}
&=2 &
 \Psi_{
 \begin{tikzpicture}[scale=0.15]
   \draw[arche] (0,0) .. controls (0,1) and (3,1) .. (3,0);
   \draw[arche] (1,0) .. controls (1,.5) and (2,.5) .. (2,0); 
   \draw[arche] (4,0) .. controls (4,.5) and (5,.5) .. (5,0);
 \end{tikzpicture}}
&=1\\ 
\Psi_{
 \begin{tikzpicture}[scale=0.15]
   \draw[arche] (0,0) .. controls (0,.5) and (1,.5) .. (1,0);
   \draw[arche] (2,0) .. controls (2,1) and (5,1) .. (5,0); 
   \draw[arche] (3,0) .. controls (3,.5) and (4,.5) .. (4,0);
 \end{tikzpicture}}
&=1 &
 \Psi_{
 \begin{tikzpicture}[scale=0.15]
   \draw[arche] (0,0) .. controls (0,1.5) and (5,1.5) .. (5,0);
   \draw[arche] (1,0) .. controls (1,1) and (4,1) .. (4,0); 
   \draw[arche] (2,0) .. controls (2,.5) and (3,.5) .. (3,0);
 \end{tikzpicture}}
&=1
\end{align*}
which are exactly the numbers that appear in the FPL counting:
\medskip

\begin{center}
\includegraphics[page=4,width=0.7\textwidth]{FPL_Tiago_fig}
\end{center}

\medskip
Razumov and Stroganov~\cite{RS-conj} noticed in 2001 that this seems to hold in general, and this was recently proved by Cantini and Sportiello~\cite{ProofRS}:

\begin{thm}[Razumov--Stroganov conjecture]
\label{conj:rs}
 The groundstate components of the Completely Packed Loop model count the number of FPL configurations: for any matching $\pi$,
\[
 \Psi_\pi=A_{\pi}.
\]
\end{thm}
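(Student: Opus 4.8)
The plan is to prove the theorem by showing that the vector $a=(A_\pi)_{|\pi|=n}$ lies in the kernel of $\mathcal{H}_{2n}$. Since this kernel is one-dimensional (Perron--Frobenius), we would then have $a=c\,\Psi$ for some scalar $c$, and summing over all $\pi$ and comparing the sum rules $\sum_\pi A_\pi = A_n = \sum_\pi \Psi_\pi$ (Theorem~\ref{th:propPsipi}) would force $c=1$, hence $A_\pi=\Psi_\pi$ for every $\pi$. So everything reduces to proving the linear system $\mathcal{H}_{2n}a=0$, which unfolds into one combinatorial identity for each matching $\pi$:
\[
2n\,A_\pi \;=\; \sum_{i=0}^{2n-1}\;\sum_{\sigma:\,e_i\sigma=\pi} A_\sigma \;=\; \sum_{i=0}^{2n-1}\#\{\,F:\;e_i(\pi(F))=\pi\,\}.
\]
Equivalently, $(A_\pi/A_n)_\pi$ is the stationary distribution of the Markov chain on matchings with transition matrix $\frac{1}{2n}\sum_{i=0}^{2n-1}e_i$, and the aim is to exhibit a measure-preserving combinatorial dynamics on the set of all FPLs of size $n$ that projects, via $F\mapsto\pi(F)$, onto this chain.

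The engine for this is Wieland's gyration, the bijection on FPLs of size $n$ that underlies Theorem~\ref{thm:invar_api}: it induces the rotation $r$ on the associated matchings, and it is assembled from elementary local moves performed at the faces of the grid, taken in two colour classes in turn. The idea is to isolate a single elementary move attached to a fixed pair of consecutive external edges $\{i,i+1\}$, a local modification $g_i$ of an FPL $F$ in a neighbourhood of those edges (possibly resolving an ambiguity that is then summed over), and to prove the key local lemma: if $\pi(F)=\sigma$, the configurations obtained from $F$ by $g_i$ have matchings lying in $\{\sigma,\,e_i\sigma\}$, and as $F$ ranges over all FPLs the multiset of outcomes reproduces exactly the action of the operator $e_i$ on the weighted measure $(A_\sigma)$. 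Summing this statement over $i=0,\dots,2n-1$ and over all $F$ then produces the displayed identities, and with them the theorem.

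The main obstacle is the proof of this local lemma. One must carry out a complete case analysis of the local edge patterns near the boundary that are compatible with the fully-packed condition, and verify in each case that the move realizes the Temperley--Lieb generator $e_i$. The delicate points are: (a) the configurations in which the move creates or destroys a small closed loop, which have to encode the idempotent relation $e_i^2=e_i$ and must be counted with weight one; (b) the interaction of the move with the frozen boundary edges and with the endpoints $i$ and $i+1$ themselves, where $e_i$ simultaneously creates the arch $\{i,i+1\}$ and reconnects the strands formerly attached to $i$ and to $i+1$; and (c) checking that the correspondence is a genuine bijection at the level of FPLs, so that nothing is over- or under-counted when one passes from the local statement to the global sum. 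Establishing the rotation and conjugation invariance of $A_\pi$ first (Theorem~\ref{thm:invar_api}) allows one to reduce the number of cases by symmetry; once the local lemma is secured, the reduction above converts it mechanically into the groundstate equations and the theorem follows.
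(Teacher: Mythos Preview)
The paper itself does not prove this theorem; it attributes the proof to Cantini and Sportiello~\cite{ProofRS} and summarises it in one line as ``verifying that the relations of Definition~\ref{defi:psipi} hold for the numbers $A_\pi$''. Your high-level reduction is exactly that strategy: show $\mathcal{H}_{2n}(A_\pi)_\pi=0$, invoke Perron--Frobenius for uniqueness, and normalise (for the last step $A_{()_n}=1=\Psi_{()_n}$ already suffices, so you need not appeal to the sum rule of Theorem~\ref{th:propPsipi}). So far this is correct and matches the cited proof.

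The gap is in the mechanism you propose for the combinatorial identity. You want to realise each Temperley--Lieb generator $e_i$ by a modification $g_i$ of the FPL that is \emph{local}, supported ``in a neighbourhood of'' the external edges $i$ and $i+1$. This cannot work as stated: $e_i$ acts non-locally on the matching, since it joins the \emph{other} endpoints of the two strands formerly attached to $i$ and $i+1$, and in an FPL those strands may traverse the whole grid before reaching their far ends. No move confined to a few boundary plaquettes can splice them. The case analysis you outline (small loops, boundary patterns, bijectivity) is therefore being run on an object that does not exist. What Cantini and Sportiello actually do is global: they perform an entire half of Wieland's gyration---all plaquettes of one colour class at once---and show that the resulting matching differs from the original by a rotation together with a single $e_j$, the index $j$ being determined by the configuration and by binary choices at certain trivially oriented plaquettes. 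Averaging over those choices and using $A_{r(\pi)}=A_\pi$ (Theorem~\ref{thm:invar_api}) then yields the groundstate equations. The correct picture is ``one global involution producing a random $e_j$'', not ``one local move per $e_i$''; your programme would have to be rebuilt around the full half-gyration before the case analysis becomes feasible.
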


The proof of Cantini and Sportiello consists in verifying that the relations of Definition~\ref{defi:psipi} hold for the numbers $A_\pi$. We note also that the results of Theorem~\ref{th:propPsipi} are now a corollary of the Razumov--Stroganov conjecture.


\section{Matchings with nested arches and polynomials}
\label{sec:polynomials}

\subsection{Definitions and results}

In~\cite{Zuber-conj}, Zuber computed some $\Psi_{(\pi)_p}$ for some small matchings $\pi$, and $p=0,1,2,...$. Among other things, he conjectured the following:
\begin{thm}[{\cite{CKLN,artic47}}]
\label{zuber}
For any matching $\pi$ and $p$ a nonnegative integer, the quantity $A_{(\pi)_p}$ can be written in the following form:
\[
 A_{(\pi)_p}=\frac{P_\pi (p)}{d(\pi)!},
\]
where $P_\pi (p)$ is a polynomial in $p$ of degree $d(\pi)$ with integer coefficients, and leading coefficient equal to $d(\pi)!/H_{\pi}$.
\end{thm}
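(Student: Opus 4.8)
The plan is to prove Theorem~\ref{zuber} by relating $A_{(\pi)_p}$ to counting of FPL configurations (or equivalently, via the Razumov--Stroganov correspondence, to the CPL groundstate) and exhibiting enough polynomial structure. I will work on the FPL side, where the key tool is the result of Caselli--Krattenthaler--Lass--Nadeau and Zuber: when one adds $p$ nested arches around a fixed matching $\pi$, the associated FPL count can be expressed, after applying Wieland's rotation (Theorem~\ref{thm:invar_api}) to bring the configuration into a convenient ``stable'' form, as a sum over certain tuples of lattice paths (or equivalently over semistandard-type fillings) whose number of terms is bounded independently of $p$, and in which $p$ enters only as a size parameter of a large rectangular region. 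Concretely, one shows $A_{(\pi)_p}$ equals a weighted enumeration of families of nonintersecting lattice paths in a region that grows linearly with $p$; by the Lindström--Gessel--Viennot lemma this enumeration is a determinant whose entries are binomial coefficients $\binom{p+a}{b}$ with $a,b$ depending only on $\pi$. Such a determinant is manifestly a polynomial in $p$.

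The next step is to pin down the degree and the leading coefficient. Each binomial entry $\binom{p+a}{b}$ is a polynomial in $p$ of degree $b$, and a careful bookkeeping of which rows/columns contribute shows the determinant has degree exactly $d(\pi)$, the number of boxes of $Y(\pi)$: intuitively, each box of the Young diagram corresponds to one ``unit'' of freedom scaling with $p$. For the leading term, I would extract the top-degree part of every matrix entry, reducing the determinant to a Vandermonde-like or a known evaluation; this is where the hook-content or hook-length data enters. The cleanest route is to recognize that the leading coefficient of $d(\pi)!\, A_{(\pi)_p}$ in $p$ is the number of standard Young tableaux of shape $Y(\pi)$ times an appropriate normalization, i.e. $d(\pi)!/H_\pi$ by the hook length formula; alternatively one identifies the top-degree determinant with the Weyl dimension formula / the principal specialization of a Schur function. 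Either way the answer is $d(\pi)!/H_\pi$, and since $P_\pi(p) = d(\pi)!\, A_{(\pi)_p}$ is a determinant of integer-valued polynomials taking integer values at all integers, multiplying through shows $P_\pi$ has the claimed form; integrality of the coefficients follows because $P_\pi(p)$ is $d(\pi)!$ times an integer-valued polynomial, but one must check that the $d(\pi)!$ clears exactly the denominators coming from the binomial coefficients --- this is automatic from the determinantal expression since the entries are already integers for integer $p$ and the degree is $d(\pi)$.

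I would structure the write-up as: (1) reduce to a stable FPL configuration via Wieland rotation so that the $p$ nested arches become $p$ extra concentric ``frame'' lines, leaving a fixed combinatorial core governed by $\pi$; (2) translate the enumeration of such FPLs into nonintersecting lattice paths and apply Lindström--Gessel--Viennot to get a determinant $D_\pi(p)$ of binomials; (3) deduce polynomiality and the bound $\deg \le d(\pi)$ from the degrees of the entries; (4) compute the leading coefficient by taking the top-degree part of the determinant and identifying it with $d(\pi)!/H_\pi$ via the hook length formula; (5) conclude integrality and that $P_\pi(p) := d(\pi)!\,A_{(\pi)_p}$ has integer coefficients.

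The main obstacle I anticipate is step (1)--(2): making precise \emph{why} adding $p$ nested arches only inflates a bounded combinatorial gadget rather than creating genuinely new structure, and getting the lattice-path model exactly right (the correct source/sink positions, the correct region, and the correct handling of the boundary conditions so that the path count is honestly $A_{(\pi)_p}$ and not merely proportional to it). This is precisely the technical heart of~\cite{CKLN, artic47}, and a fully rigorous treatment requires care with the FPL-to-paths dictionary near the nested arches. A secondary, more routine but still delicate obstacle is the leading-coefficient computation in step (4): one must correctly identify the top-degree determinant, which can pick up spurious factors if the degree bookkeeping in step (3) is done carelessly, so I would double-check it on small cases (e.g. $\pi = ()_n$, where $d(\pi)=0$, and $\pi$ a single nontrivial arch) before trusting the general argument.
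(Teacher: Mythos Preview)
The paper does not actually prove Theorem~\ref{zuber}; it is stated with attribution to~\cite{CKLN} (for $A_{(\pi)_p}$) and~\cite{artic47} (for $\Psi_{(\pi)_p}$), and the surrounding Sections~2.2--2.3 only record the resulting expressions~\eqref{eq:apiX} and~\eqref{eq:psipiX} without reproving polynomiality, degree, or the leading coefficient. So there is no in-paper proof to compare your proposal against.

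That said, your outline is a faithful high-level sketch of the strategy in~\cite{CKLN}: Wieland rotation to a stable region, translation to nonintersecting lattice paths, Lindstr\"om--Gessel--Viennot to obtain a determinant of binomial coefficients in $p$, then extraction of degree and leading coefficient via the hook length formula. Two small points where your write-up would need tightening: (i) the integrality argument should be stated cleanly as ``an integer-valued polynomial of degree $d$ is an integer combination of $\binom{t}{0},\ldots,\binom{t}{d}$, and $d!\binom{t}{k}\in\mathbb{Z}[t]$ for $k\le d$'', rather than the slightly circular phrasing you give; (ii) as you yourself note, step~(1)--(2) is genuinely the technical core, and in~\cite{CKLN} it is not a single LGV application but passes through an intermediate combinatorial decomposition (essentially what the present paper encodes in the coefficients $a_\sigma^\pi$ of~\eqref{eq:apiX}), so a fully rigorous version would need to reproduce that structure rather than a single determinant.
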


This was proved first by Caselli, Krattenthaler, Lass and Nadeau in~\cite{CKLN} for $A_{(\pi)_p}$, and by Fonseca and Zinn-Justin in~\cite{artic47} for $\Psi_{(\pi)_p}$. Because of this polynomiality property, we introduce the following notations :

\begin{defi}[$A_\pi(t)$ and $\Psi_\pi(t)$] 
We let $A_\pi(t)$ (respectively  $\Psi_\pi(t)$) be the polynomial in $t$ such that  $A_\pi(p)=A_{(\pi)_p}$ (resp. $\Psi_\pi(p)=\Psi_{(\pi)_p}$) for all integers $p\geq 0$.
\end{defi}

By the Razumov--Stroganov conjecture~\ref{conj:rs} one has clearly for all $\pi$:
\[
 A_\pi(t)=\Psi_\pi(t).
\]
We introduced two different notations so that the origin of the quantities involved becomes clearer; in most of this paper however we will only use the notation $A_\pi(t)$. It is the objective of this paper to investigate these polynomials, and give evidence that they possess very interesting properties, in particular when they are evaluated at negative integers. The following proposition sums up some properties of the polynomials.

\begin{prop}
\label{prop:polynomials}
The polynomial $A_\pi(t)$ has degree $d(\pi)$ and leading coefficient $1/H_\pi$. Furthermore, we have $A_\pi(t)=A_{\pi^*}(t)$, and $A_{(\pi)_\ell}(t)=A_{\pi}(t+\ell)$ for any nonnegative integer $\ell$.
\end{prop}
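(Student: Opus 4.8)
The plan is to deduce all three assertions from Theorem~\ref{zuber} and Theorem~\ref{thm:invar_api}, combined with two elementary identities about nested arches, using repeatedly that a polynomial is determined by its values on the nonnegative integers. I do not expect a genuine obstacle here: the substance lies entirely in the quoted theorems, and the remaining work amounts to checking the combinatorial identities $\bigl((\pi)_p\bigr)^{*}=(\pi^{*})_p$ and $\bigl((\pi)_\ell\bigr)_p=(\pi)_{\ell+p}$, together with the elementary remark that an equality of the integer sequences $\bigl(A_{(\pi)_p}\bigr)_{p\geq 0}$ lifts to an equality of the interpolating polynomials.

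The degree and leading coefficient are immediate. By Theorem~\ref{zuber}, $A_\pi(p)=P_\pi(p)/d(\pi)!$ for every integer $p\geq 0$, where $P_\pi$ has degree $d(\pi)$ and leading coefficient $d(\pi)!/H_\pi$. Since $A_\pi(t)$ and $P_\pi(t)/d(\pi)!$ are polynomials agreeing at all nonnegative integers, they coincide, so $A_\pi(t)$ has degree $d(\pi)$ and leading coefficient $1/H_\pi$.

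For $A_\pi(t)=A_{\pi^{*}}(t)$, I would first record that $\bigl((\pi)_p\bigr)^{*}=(\pi^{*})_p$ for every $p\geq 0$. This is transparent on parenthesis words: conjugation reverses the word and exchanges the two kinds of parenthesis, and applied to the word of $(\pi)_p$ -- namely the word of $\pi$ flanked by $p$ opening parentheses on the left and $p$ closing parentheses on the right -- it returns the word of $\pi^{*}$ flanked in exactly the same way, i.e.\ the word of $(\pi^{*})_p$. Theorem~\ref{thm:invar_api} then gives $A_{(\pi)_p}=A_{((\pi)_p)^{*}}=A_{(\pi^{*})_p}$, so $A_\pi(p)=A_{\pi^{*}}(p)$ for all $p\geq 0$, whence $A_\pi(t)=A_{\pi^{*}}(t)$.

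Finally, for $A_{(\pi)_\ell}(t)=A_\pi(t+\ell)$, the key observation is the trivial identity $\bigl((\pi)_\ell\bigr)_p=(\pi)_{\ell+p}$: wrapping $p$ nested arches around a matching already carrying $\ell$ nested arches yields $\ell+p$ nested arches. Hence, for every integer $p\geq 0$,
\[
A_{(\pi)_\ell}(p)=A_{((\pi)_\ell)_p}=A_{(\pi)_{\ell+p}}=A_\pi(\ell+p),
\]
and since the left-hand side is polynomial in $p$ by Theorem~\ref{zuber} applied to the matching $(\pi)_\ell$, the two interpolating polynomials must agree: $A_{(\pi)_\ell}(t)=A_\pi(t+\ell)$. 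This is consistent with the degree statement because nested arches do not change the shape of the Young diagram: on the increasing-sequence representation one has $a((\pi)_\ell)_i=i$ for $i\leq\ell$ and $a((\pi)_\ell)_i=a(\pi)_{i-\ell}+\ell$ for $i>\ell$, so $Y((\pi)_\ell)=Y(\pi)$, and in particular $d((\pi)_\ell)=d(\pi)$ and $H_{(\pi)_\ell}=H_\pi$.
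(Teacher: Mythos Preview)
Your proof is correct and follows exactly the route the paper sketches: the degree and leading coefficient are read off from Theorem~\ref{zuber}, while the remaining two identities are checked at all nonnegative integers (using Theorem~\ref{thm:invar_api} and the obvious combinatorial identities on nested arches) and then lifted to polynomial identities. The paper compresses this into a single sentence, and you have simply spelled out the details.
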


  The first part comes from Theorem~\ref{zuber}, while the rest is clear when $t$ is a nonnegative integer and thus holds true in general by polynomiality in $t$. 

In this section we will recall briefly certain expressions for these polynomials, and point to other works for the proofs.

\subsection{The FPL case}

If $\pi$ is a matching with $n$ arches, the polynomial $A_\pi(t)$ admits the following expression:
\begin{equation}
\label{eq:apiX}
A_\pi(t)=\sum_{\si\leq \pi}a_{\si}^\pi\cdot S_\si(t-n+1),
\end{equation}
in which $\si$ is a parenthesis word (cf. Section~\ref{representations}), the $a_{\si}^\pi$ are the nonnegative integers denoted by $a(\si,\pi,\mathbf{0}_n)$ in~\cite{Thapper}, and $S_\si(t-n+1)$ is the polynomial given by
\[ 
 S_\si(t-n+1)=\frac{1}{H_\si}\prod_{u\in Y(\si)}(t-n+1+c(u)),
\]
in which and $H_\si$, $c(u)$ being defined in Section~\ref{representations}. If $N$ denotes a nonnegative integer, $S_\si(N)$ enumerates semistandard Young tableaux of shape $Y(\si)$ with entries not larger than $N$: this is the {\em hook content formula}, cf.~\cite{StanleyEnum2} for instance.
\medskip

Equation~\eqref{eq:apiX} above can be derived from~\cite[Equation (4)]{Thapper}  (itself based on the work ~\cite{CKLN}) together with  Conjecture~3.4 in the same paper: this conjecture and the derivation are proved in~\cite{NadFPL1}.

\subsection{The CPL case} \label{sec:pol_qKZ}

In this subsection we briefly explain how to compute bivariate polynomials $\Psi_{\pi}(\tau,t)$, defined as the homogeneous limit of certain multivariate polynomials (see Section~\ref{sec:firstroot} for more details and references). We will be mostly interested in the case $\tau=1$, since we recover the groundstate $\Psi_{\pi}(t)=\Psi_{\pi}(1,t)$, as explained in~\cite{hdr}; we address the case of general $\tau$ in Section~\ref{sec:tau}.

So let $a=\{a_1,\ldots,a_n\}$ be a matching represented as an increasing sequence, and define the polynomial $\Phi_{a}(\tau)$ by:
\[
 \Phi_{a}(\tau) = \oint \ldots \oint \prod_i \frac{du_i}{2 \pi i u_i^{a_i}} \prod_{j>i} (u_j-u_i) (1+\tau u_j+u_i u_j). 
\]
We can then obtain the $\Psi_\pi(\tau)$ via a certain matrix $C(\tau)$ :
\begin{align} 
 \Phi_a (\tau)=&\sum_\pi C_{a,\pi}(\tau) \Psi_\pi (\tau);\label{eq:psiphi1}\\
 \Psi_\pi (\tau)=&\sum_a \C_{\pi,a}(\tau) \Phi_a (\tau).\label{eq:psiphi2}
\end{align}

The coefficients $C_{a,\pi}(\tau)$ are given explicitly in~\cite[Appendix A]{artic41}. We just need the following facts:

\begin{prop}[{\cite[Lemma 3]{artic47}}] 
\label{prop:Bases}
 Let $a$ and $\pi$ be two matchings. Then we have:
\[
 C_{a,\pi}(\tau)=\begin{cases}
             0 & \textrm{if } \pi \nleq a;\\
             1 & \textrm{if } \pi=a;\\
             P_{a,\pi} (\tau) & \textrm{if } \pi < a,
            \end{cases}
\]
where $P_{a,\pi}(\tau)$ is a polynomial in $\tau$ with degree $\leq d(a)-d(\pi)-2$.
\end{prop}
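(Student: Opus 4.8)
The plan is to extract the stated properties directly from the contour-integral definition of $\Phi_a(\tau)$ together with the change-of-basis formulas~\eqref{eq:psiphi1}--\eqref{eq:psiphi2}. First I would expand the product $\prod_{j>i}(u_j-u_i)(1+\tau u_j+u_iu_j)$ and read off, for each monomial $\prod_i u_i^{b_i}$ that survives the contour integration (i.e. those with $b_i=a_i-1$ after the $1/u_i^{a_i}$ factors), which matchings $b$ can occur. The key observation is a degree/monotonicity bound: each factor $(u_j-u_i)$ contributes at most one power of $u_j$ and the factor $(1+\tau u_j + u_iu_j)$ contributes at most two powers of $u_j$ (one of them coming with a $\tau$), while the $u_i$-powers are bounded in the opposite direction. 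Tracking the total degree in the $u_i$'s shows $\sum_i a_i$ is controlled, and a finer bookkeeping of which variables can receive powers shows that every matching $b$ appearing in the expansion of $\Phi_a(\tau)$ in the $\Phi$-basis — equivalently every $\pi$ with $C_{a,\pi}(\tau)\neq 0$ — satisfies $\pi\leq a$ in the order of Section~\ref{representations} (where $a_i\leq a'_i$ for all $i$ characterises $\pi\leq a$). This gives the first case, $C_{a,\pi}(\tau)=0$ when $\pi\nleq a$.

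Next, for the diagonal term I would isolate the contribution of the monomial obtained by taking the ``identity'' choice in every binomial factor — namely $u_j$ from each $(u_j-u_i)$ and $1$ from each $(1+\tau u_j+u_iu_j)$. A short computation shows this yields exactly $\prod_i u_i^{a_i-1}$ with coefficient $1$ (up to sign, which one checks is $+1$ with the stated orientation of the contours), and that no other choice of terms reproduces the same monomial while $a$ is the maximal matching in its own support. Hence $\Phi_a(\tau)=\Phi_a + \text{(lower terms)}$ in the $\Phi$-basis, and since the base change between the $\Phi$'s and the $\Psi$'s is triangular with $1$'s on the diagonal (this is precisely what~\eqref{eq:psiphi1}--\eqref{eq:psiphi2} with a unitriangular $C(\tau)$ encode), we get $C_{a,a}(\tau)=1$.

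For the degree bound when $\pi<a$, I would count powers of $\tau$: the variable $\tau$ appears only through the factors $(1+\tau u_j+u_iu_j)$, and picking the $\tau u_j$ term instead of $u_iu_j$ lowers the total $u$-degree by one (it replaces a $u_iu_j$ by a $u_j$) while raising the $\tau$-degree by one. Translating ``total $u$-degree decreased by $k$'' into the partial order via the bijection between monomials and matchings, one sees that producing a matching $\pi$ strictly below $a$ with many $\tau$'s forces $d(a)-d(\pi)$ to be large; more precisely each unit of $\tau$-degree costs at least one unit of $d(a)-d(\pi)$, and the two ``extra'' units (the $-2$ in the bound) come from the fact that $d(a)-d(\pi)\geq 1$ already when $\pi<a$ together with one more forced drop from the interaction of the two types of factors. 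Assembling these estimates gives $\deg_\tau P_{a,\pi}(\tau)\leq d(a)-d(\pi)-2$.

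The main obstacle I expect is the last, quantitative step: turning the loose ``each $\tau$ costs one box'' heuristic into a rigorous inequality with the exact constant $-2$. This requires a careful combinatorial analysis of exactly which monomials $\prod u_i^{b_i}$ arise from the expansion and how the map (monomial $\mapsto$ matching) interacts with both the $u$-degree and the $\tau$-degree; it is essentially a bookkeeping argument, but getting the off-by-two constant right is delicate and is the place where one must be most careful. The first two cases, by contrast, are robust and follow from the triangular structure already implicit in Proposition~\ref{prop:Bases}'s companion results in~\cite{artic47,artic41}.
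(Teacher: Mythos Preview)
The paper does not prove this proposition at all: it is quoted verbatim from \cite[Lemma~3]{artic47}, and the coefficients $C_{a,\pi}(\tau)$ themselves come from the explicit formulas in~\cite[Appendix~A]{artic41}. As the footnote in Section~\ref{sub:multi} makes clear, the \emph{actual} definition of $C_{a,\pi}(\tau)$ is via the multivariate polynomials $\Phi_a(z_1,\dots,z_{2n})$ and $\Psi_\pi(z_1,\dots,z_{2n})$; the properties in Proposition~\ref{prop:Bases} are then proved from that definition together with the specialization~\eqref{eq:C_1}, $\Phi_a(q^\epsilon)=\tau^{d(\epsilon)}C_{a,\epsilon}(\tau)$.

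Your plan has a basic conceptual gap that makes it unworkable as stated. You try to extract $C_{a,\pi}(\tau)$ from the \emph{homogeneous} contour integral for $\Phi_a(\tau)$ alone, by expanding the integrand in monomials $\prod_i u_i^{b_i}$ and tracking which ``matchings $b$'' occur. But the contour integral simply extracts the single coefficient of $\prod_i u_i^{a_i-1}$ from the integrand; what comes out is a polynomial in $\tau$, not a linear combination of objects indexed by matchings. In particular, the phrase ``expansion of $\Phi_a(\tau)$ in the $\Phi$-basis --- equivalently every $\pi$ with $C_{a,\pi}(\tau)\neq 0$'' conflates two unrelated things: there is no nontrivial $\Phi$-basis expansion of $\Phi_a(\tau)$ (it is just itself), and the coefficients $C_{a,\pi}(\tau)$ live in the change of basis to the $\Psi_\pi$, which your integral analysis never touches. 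Since the $\Psi_\pi(\tau)$ are determined by the eigenvector equation of Definition~\ref{defi:psipi}, not by a residue formula, nothing in your bookkeeping of $u$-degrees or $\tau$-degrees can see them. Your diagonal and triangularity claims then become circular: you invoke ``the triangular structure already implicit in Proposition~\ref{prop:Bases}'s companion results'' to prove Proposition~\ref{prop:Bases}.

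To make progress you would need to pass to the multivariate integral~\eqref{eq:qKZ_var} and use Lemma~\ref{lem:dual}: evaluating $\Phi_a(z_1,\dots,z_{2n})$ at the points $q^\epsilon$ singles out one $\Psi_\pi$ at a time and hands you $C_{a,\epsilon}(\tau)$ directly via~\eqref{eq:C_1}. The vanishing for $\pi\nleq a$, the value $1$ on the diagonal, and the degree bound then follow from a residue computation of $\Phi_a(q^\epsilon)$, which is the argument carried out in~\cite{artic47}. Your $\tau$-degree heuristic (``each $\tau$ costs one box'') is in the right spirit, but it only becomes meaningful once you are evaluating at $q^\epsilon$ rather than staring at the homogeneous integral.
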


Moreover, we have
\begin{equation}
\label{eq:capi-tau}
 C_{a,\pi}(\tau)=(-1)^{d(a)-d(\pi)} C_{a,\pi}(-\tau),
\end{equation}
since it is a product of polynomials $U_s$ in $\tau$ with degree of the form $d(a)-d(\pi)-2k$, $k\in \mathbb{N}$, and parity given by $d(a)-d(\pi)$: this is an easy consequence of~\cite[p.12 and Appendix C]{artic47}. 

By abuse of notation, we write $(a)_p$ to represent $\{1,\ldots,p,p+a_1,\ldots,p+a_n\}$, since this corresponds indeed to adding $p$ nested arches to $\pi(a)$ via the bijections of Section~\ref{sec:defi}. Then 
one easy but important lemma for us is the following:

\begin{lemma}[{\cite[Lemma 4]{artic47}}]
\label{lem:sta} 
The coefficients $C_{a,\pi}(\tau)$ are stable, that is:
  \[
   C_{(a)_p,(\pi)_p}(\tau)=C_{a,\pi}(\tau) \qquad \forall p\in \mathbb{N}.
 \]
\end{lemma}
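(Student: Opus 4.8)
The plan is to reduce the stability of the coefficients $C_{a,\pi}(\tau)$ to the stability of the underlying combinatorial/representation-theoretic data that defines the matrix $C(\tau)$, using the explicit description of Proposition~\ref{prop:Bases} together with the change of basis in~\eqref{eq:psiphi1}. The cleanest route is probably to work on the $\Phi$ side: the contour integral $\Phi_a(\tau)$ has a manifest combinatorial meaning (it is a skew Schur-type generating function in disguise, the $u_iu_j$ and $\tau u_j$ factors contributing the matching/weight structure), and adding $p$ nested arches, i.e. replacing $a=\{a_1,\dots,a_n\}$ by $(a)_p=\{1,\dots,p,p+a_1,\dots,p+a_n\}$, corresponds to prepending $p$ variables whose contribution to the integral factors off cleanly. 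First I would check that $\Phi_{(a)_p}(\tau)$ decomposes, via the expansion~\eqref{eq:psiphi1}, in a way that mirrors the decomposition of $\Phi_a(\tau)$ — concretely, that the $p$ new nested arches simply propagate through each term, sending $\pi\mapsto(\pi)_p$ without altering coefficients.

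The key steps, in order, would be: (1) recall from~\cite[Appendix A]{artic41} the product formula expressing $C_{a,\pi}(\tau)$ as a product of the polynomials $U_s(\tau)$ indexed by the rim decomposition / the combinatorial correspondence between $a$ and $\pi$ (the same product alluded to just before the statement, used to deduce~\eqref{eq:capi-tau}); (2) observe that the combinatorial object indexing this product is built from the relative position of $\pi$ inside $a$ in the partial order of Section~\ref{representations}, i.e. from the skew Young-diagram shape $Y(a)/Y(\pi)$ (and possibly a filling thereof); (3) verify that $Y((a)_p)/Y((\pi)_p)$ is the same skew shape as $Y(a)/Y(\pi)$ — adding $p$ nested arches translates, in the Young-diagram picture, into a shift that leaves the skew shape and all relevant contents/hook data unchanged — so that the indexing set and each factor $U_s(\tau)$ in the product for $C_{(a)_p,(\pi)_p}(\tau)$ coincides termwise with that for $C_{a,\pi}(\tau)$; (4) conclude equality. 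Throughout, the cases $\pi\nleq a$ and $\pi=a$ from Proposition~\ref{prop:Bases} are handled trivially, since $\pi\leq a\iff(\pi)_p\leq(a)_p$ and $\pi=a\iff(\pi)_p=(a)_p$, so only the generic case $\pi<a$ needs the product argument.

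Alternatively, if unpacking the explicit formula from~\cite{artic41} is too cumbersome, one can argue more structurally: the relations~\eqref{eq:psiphi1}--\eqref{eq:psiphi2} together with Proposition~\ref{prop:Bases} characterize $C(\tau)$ as the unique unitriangular (with respect to $\leq$) matrix intertwining the $\Phi$-basis and the $\Psi$-basis. One then shows directly that $\Phi_{(a)_p}(\tau)=\sum_\pi C_{a,\pi}(\tau)\,\Psi_{(\pi)_p}(\tau)$ — by factoring the $p$ added variables out of the contour integral and recognizing the remaining integral as $\Phi_a$ evaluated with a shifted set of exponents — and invokes uniqueness to identify $C_{(a)_p,(\pi)_p}(\tau)$ with $C_{a,\pi}(\tau)$.

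I expect the main obstacle to be step~(3): making precise that "adding $p$ nested arches" acts as a shift that genuinely preserves the skew shape and \emph{every} quantity entering the product formula for $C_{a,\pi}(\tau)$ — in particular checking that the degrees $d(a)-d(\pi)-2k$ and the parity data are visibly stable (as they must be, given $d((a)_p)-d((\pi)_p)=d(a)-d(\pi)$), and that no "boundary" factor tied to the absolute position of a box (rather than its relative position inside the skew shape) sneaks in. Pinning down the exact combinatorial index set from~\cite{artic41} and verifying its invariance under the shift is where the real work lies; everything else is bookkeeping on the partial order and the integral.
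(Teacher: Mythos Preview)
The paper does not supply its own proof of this lemma: it is quoted directly from \cite[Lemma~4]{artic47} with no argument given here, so there is no in-paper proof to compare your proposal against.

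For what it is worth, the concern you flag as the ``main obstacle'' (your step~(3)) largely dissolves once one observes that adding $p$ nested arches does not merely preserve the \emph{skew} shape $Y(a)/Y(\pi)$: it leaves each Young diagram individually unchanged. In the Dyck-path description of Section~\ref{representations}, the operation $\pi\mapsto(\pi)_p$ prepends $p$ NE steps and appends $p$ SE steps, which does not affect the complementary region that records $Y(\pi)$; hence $Y((a)_p)=Y(a)$ and $Y((\pi)_p)=Y(\pi)$, in line with $d((\pi)_p)=d(\pi)$ as already used in Proposition~\ref{prop:polynomials}. So the only genuine verification left in your first route is whether the explicit product formula of~\cite[Appendix~A]{artic41} depends solely on the pair of diagrams, or also on the ambient size $n$ --- that is the point that must be read off the cited reference. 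Your alternative, more structural route (factor the first $p$ variables out of the contour integral for $\Phi_{(a)_p}$ and invoke uniqueness of the unitriangular change of basis) is also reasonable; the integration step you describe is in fact exactly the computation the paper performs immediately after stating the lemma.
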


We remark that Proposition~\ref{prop:Bases}, Equation~\eqref{eq:capi-tau} and Lemma~\ref{lem:sta} also hold for the coefficients $\C_{a,\pi}(\tau)$ of the inverse matrix. Now
\begin{align*}
 \Phi_{(a)_p} (\tau) =& \oint\ldots\oint \prod_i^{n+p} \frac{du_i}{2\pi i u_i^{\hat{a}_i}} \prod_{j>i} (u_j-u_i)(1+\tau u_j+u_i u_j)\\
 =& \oint\ldots\oint \prod_i^n \frac{du_i}{2\pi i u_i^{a_i}} (1+\tau u_i)^p \prod_{j>i} (u_j-u_i)(1+\tau u_j+u_i u_j),
\end{align*}
where we integrated in the first $p$ variables and renamed the rest $u_{p+i}\mapsto u_i$. This is a polynomial in $p$, and we will naturally note $\Phi_{a} (\tau,t)$ the polynomial such that $\Phi_{a} (\tau,p)=\Phi_{(a)_p}(\tau)$.

Finally, from Equation~\eqref{eq:psiphi2} and Lemma~\ref{lem:sta} we obtain the fundamental equation
\begin{equation}
\label{eq:psitauphitau}
 \Psi_\pi (\tau,t) = \sum_a \C_{\pi,a}(\tau) \Phi_a (\tau,t).
\end{equation}

In the special case $\tau=1$, we write $C_{a,\pi}=C_{a,\pi}(1)$, $\Phi_a (t)=\Phi_a (1,t)$ and thus
\begin{equation}
\label{eq:psipiX}
 A_\pi(t)=\Psi_\pi (t) = \sum_a \C_{\pi,a} \Phi_a (t),
\end{equation}
thanks to the Razumov--Stroganov conjecture~\ref{conj:rs}. This gives us a second expression for $A_\pi (t)$, the first one being given by~\eqref{eq:apiX}.


\section{The main conjectures}
\label{sec:conj}

In this section we present several conjectures about the polynomials $A_{\pi}(t)$. For each of them, we will give strong supporting evidence. We will first give a combinatorial construction that is essential in the statement of the conjectures.

\subsection{Combinatorics}
\label{sub:combdef}
We give two rules which define certain integers attached to a matching $\pi$. It turns out that the two rules are equivalent, which is the content of Theorem~\ref{th:equivab}.

 Let $\pi$ be a link pattern, and $n=|\pi|$ its number of arches. We let $Y(\pi),d(\pi)$ be the Young diagram of $\pi$ and its number of boxes respectively, as defined in Section~\ref{representations}. We also use the notation $\widehat{x}=2n+1-x$ for $x\in \lb 1,2n \rb$.
\medskip

{\bf Rule $A$:} For $p$ between $1$ and $n-1$, we consider the set $\mathcal{A}_p^L(\pi)$ of arches $\{a_1,a_2\}$ such that $a_1\leq p$ and $p<a_2<\widehat{p}$, and the set $\mathcal{A}_p^R(\pi)$ of arches $\{a_1,a_2\}$ such that $p<a_1<\widehat{p}$ and  $\widehat{p}\leq a_2$. It is clear that $|\mathcal{A}_p^L(\pi)|+|\mathcal{A}_p^R(\pi)|$ is an even integer, and we can thus define the integer $\ma_p(\pi)$ by
\[
\ma_p(\pi):=\frac{|\mathcal{A}_p^L(\pi)|+|\mathcal{A}_p^R(\pi)|}{2}.
\] 

For instance, let $\pi_0$ be the matching with $8$ arches represented below on the left; we give an alternative representation on the right by folding the second half of the points above the first half, so that $\widehat{x}$ and $x$ are vertically aligned. For $p=4$, we get $|\mathcal{A}_p^L(\pi_0)|=3,|\mathcal{A}_p^R(\pi_0)|=1$, which count arches between the regions (O) and (I), and thus $\ma_4(\pi_0)=4/2=2$. The reader will check that \[\ma_p(\pi_0)=0,1,2,2,2,1,1\] for $p=1,\ldots,7$.

\begin{center}
\includegraphics[page=3,width=0.9\textwidth]{FPL_Tiago_fig}
\end{center}

{\bf Rule B:} Label the boxes of $Y(\pi)$ by associating $n+1-x-y$  to the box $(x,y)$. Then decompose $Y(\pi)$ in rims (cf. Section~\ref{representations}) and let $R_1,\ldots,R_t$ be the successive rims: using the example $\pi_0$ from rule A, we represented below the $Y(\pi_0)$ with its labeling and decomposition in (three) rims. For a given rim $R_\ell$, denote by $i$ and $j$ the labels appearing at the bottom left and top right of the rim, and by $k$ the minimal value appearing in the rim (so that $k\leq i,j$). We define the multiset $B_\ell$ as
 \[
\{k\}\cup\{i,i-1,\ldots,k+1\}\cup\{j,j-1,\ldots k+1\},
\]
 and let $B_\pi$ be the union of all multisets $B_\ell$. Finally, we define $\mb_i(\pi)$ be the multiplicity of the integer $i\in\{1,\ldots,n-1\}$ in $B_\pi$.

In the case of $\pi_0$, the rims give the multisets $\{2,4,3,3\}$, $\{4,5,5\}$ and $\{6,7\}$. Their union is $B_{\pi_0}=\{2,3^2,4^2,5^2,6,7\}$, so that \[\mb_p(\pi_0)=0,1,2,2,2,1,1\] for $p=1,\ldots,7$.

\begin{center}
\includegraphics[page=2,width=0.2\textwidth]{FPL_Tiago_fig}
\end{center}

We see here that $\ma_p(\pi_0)=\mb_p(\pi_0)$ for all $p$, which holds in general:

\begin{thm}
\label{th:equivab}
 For any matching $\pi$, and any integer $p$ such that $1\leq p\leq |\pi|-1$, we have $\ma_p(\pi)=\mb_p(\pi)$.
\end{thm}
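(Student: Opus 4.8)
The plan is to show that both $\ma_p(\pi)$ and $\mb_p(\pi)$ count the same geometric feature of the Young diagram $Y(\pi)$, namely a certain statistic on the boundary path, indexed by the diagonal $c(u) = n-1-p$ (equivalently, boxes with label $n+1-x-y = p$). Recall that $\pi$ corresponds to a Dyck path of semilength $n$, and $Y(\pi)$ is the region between that path and the staircase; the antidiagonals of $Y(\pi)$ are naturally indexed by the steps of the Dyck path, and the content $c(u)=y-x$ is constant along antidiagonals. First I would translate Rule $A$ into the Dyck path / sequence language: the condition ``$a_1 \le p$ and $p < a_2 < \widehat p$'' on an arch $\{a_1,a_2\}$, together with the mirror condition defining $\mathcal A_p^R(\pi)$, should be rephrased as a statement about how many arches ``cross'' the vertical line at position $p$ in the folded picture, which in turn becomes a statement about the height of the Dyck path at step $p$ and at step $\widehat p$, and about arches nested around that point. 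The key identity to extract here is that $\ma_p(\pi)$ equals the number of antidiagonal cells of $Y(\pi)$ lying on content diagonal $c = n-1-p$ that belong to a prescribed set — most plausibly, $\ma_p(\pi)$ counts the cells $u$ with $n+1-x-y$ taking a value related to $p$, matching exactly the "$\{k\}\cup\{i,\dots\}\cup\{j,\dots\}$" bookkeeping of Rule $B$.

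For Rule $B$, the main step is to understand what the multiset $B_\ell$ of a single rim $R_\ell$ contributes. A rim (border strip) of $Y(\pi)$ is a connected skew shape with no $2\times 2$ square; walking along it from its bottom-left cell to its top-right cell, the label $n+1-x-y$ changes by $\pm 1$ at each step (it increases by $1$ on an up-step, decreases by $1$ on a right-step of the boundary). So the sequence of labels along $R_\ell$ is a lattice path from $i$ to $j$ whose minimum value is $k$; the multiset $\{k\}\cup\{i,i-1,\dots,k+1\}\cup\{j,j-1,\dots,k+1\}$ is exactly the multiset of values that such a "valley-shaped" excursion must hit, counted with the multiplicity forced by having to descend from $i$ to $k$ and ascend from $k$ to $j$ (each intermediate level is crossed the minimal number of times). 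Thus $\mb_p(\pi)$ counts, summed over all rims, the number of times the level $p$ is visited by these excursions — and since the rims partition $Y(\pi)$, this is a global count on the whole diagram. I would then set up a bijection (or an explicit level-counting argument) showing that the total number of such rim-visits to level $p$ equals the number of antidiagonal cells on the content diagonal $c=n-1-p$ that are "active" in the precise sense coming from Rule $A$, perhaps most cleanly by processing the rim decomposition and showing each removed rim corresponds to a layer of arches in the folded matching.

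The cleanest route is probably to avoid a direct $A \leftrightarrow B$ bijection and instead compute a common closed form. For $\ma_p$: fix $p$, look at the folded picture; the quantity $|\mathcal A_p^L| + |\mathcal A_p^R|$ has an interpretation as twice the number of arches separating region (O) from region (I), which should equal $h_p + h_{\widehat p}$ suitably corrected, where $h_j$ is the Dyck path height, i.e. a simple function of the path. For $\mb_p$: using that removing a rim decreases every hook length appropriately and that the rim containing a given antidiagonal is determined by iterated border-strip removal, one shows $\mb_p(\pi) = \#\{u \in Y(\pi) : c(u) = n-1-p\} - (\text{correction for diagonal cells})$, again a function of the path heights. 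Matching the two closed forms finishes the proof. \textbf{The main obstacle} I anticipate is the rim-decomposition bookkeeping in Rule $B$: one must carefully verify that the "minimal crossing" multiset $\{k\}\cup\{i,\dots,k+1\}\cup\{j,\dots,k+1\}$ is genuinely what the label excursion along a border strip produces (in particular that levels below $k$ never occur and that no level between $k$ and $\min(i,j)$ is over- or under-counted when rims are stacked), and that summing over successive rims reconstitutes precisely the antidiagonal/arch count of Rule $A$ with no off-by-one error at the corners where consecutive rims meet. Given the computational flavor, I would expect the authors to relegate this verification to an appendix (indeed, Appendix~\ref{app:equivab} is referenced for exactly this), handling it by an explicit induction on the number of rims or on $d(\pi)$.
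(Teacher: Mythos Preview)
Your proposal is a survey of plausible strategies rather than a proof, and the concrete claims you do make contain a real gap. You correctly describe $B_\ell$ as the ``minimal-crossing'' multiset of a valley from $i$ down to $k$ and back up to $j$, but you then slide into saying that $\mb_p(\pi)$ ``counts, summed over all rims, the number of times the level $p$ is visited by these excursions'' and that ``since the rims partition $Y(\pi)$, this is a global count on the whole diagram''. This is where the argument breaks: the actual label sequence along a rim is \emph{not} valley-shaped in general (it can oscillate), so $B_\ell$ is \emph{not} the multiset of labels of cells in $R_\ell$, and therefore $\mb_p(\pi)$ is not the number of cells of $Y(\pi)$ with label $p$. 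Your subsequent plan to match this against ``the number of antidiagonal cells on content diagonal $c=n-1-p$ that are active in the sense of Rule~$A$'' never specifies what ``active'' means, and your closed-form route via Dyck heights $h_p+h_{\widehat p}$ is left as a heuristic. None of the three approaches you sketch is carried far enough to constitute a proof.

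The paper does exactly what you guess in your last sentence: an induction on $d(\pi)$, peeling off the outer rim at each step. But the mechanism is on the Rule~$A$ side, not the Rule~$B$ side. Removing the outer rim corresponds, in the parenthesis word, to flipping the leftmost closing parenthesis (at position $i+1$) to an opening one and the rightmost opening parenthesis (at position $\widehat{j+1}$) to a closing one; this replaces an explicit list of arches of $\pi$ by an explicit list of arches of $\pi'$. One then computes directly, for each $t$, the differences $|\mathcal{A}_t^L(\pi)|-|\mathcal{A}_t^L(\pi')|$ and $|\mathcal{A}_t^R(\pi)|-|\mathcal{A}_t^R(\pi')|$ by inspecting which of those arches cross the cut at $t$. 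The resulting difference $\ma_t(\pi)-\ma_t(\pi')$ is then seen to equal the multiplicity of $t$ in the multiset $\{k\}\cup\{k+1,\ldots,i\}\cup\{k+1,\ldots,j\}$ attached to the removed rim by Rule~$B$, which is precisely $\mb_t(\pi)-\mb_t(\pi')$. No geometric reinterpretation of $B_\ell$ in terms of cell labels is needed or used.
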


The proof of this theorem is a bit technical, but not difficult; it is given in Appendix~\ref{app:equivab}. 

\begin{defi}[$m_p(\pi)$]
 For any matching $\pi$ and any integer $p$, we let $m_p(\pi)$ be the common value of $\ma_p(\pi)$ and $\mb_p(\pi)$ if $1\leq p\leq |\pi|-1$, and be equal to $0$ otherwise.
\end{defi}

We have then the following result:

\begin{prop}
\label{prop:compatroots}
For any matching $\pi$, we have $\sum_pm_p(\pi)\leq d(\pi)$, and the difference $d(\pi)-\sum_pm_p(\pi)$ is an even integer.
\end{prop}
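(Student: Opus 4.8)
The plan is to work entirely with Rule $B$, since the quantity $\sum_p m_p(\pi) = |B_\pi|$ counts (with multiplicity) the total number of labels appearing in the multisets $B_\ell$, and we want to compare this total with $d(\pi) = \sum_\ell |R_\ell|$, the total number of boxes. Both sides decompose as a sum over the rims $R_1,\dots,R_t$ in the rim decomposition of $Y(\pi)$, so it suffices to prove, for each single rim $R_\ell$, that $|B_\ell| \leq |R_\ell|$ and that $|R_\ell| - |B_\ell|$ is even; summing these $t$ inequalities and parity statements then gives the proposition.

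So fix a rim $R$ (a connected skew shape that is a single "border strip"), let $i$ and $j$ be the labels at its bottom-left and top-right boxes, and let $k$ be the minimal label in $R$. First I would observe how the labeling $n+1-x-y$ behaves along a rim: moving one step up within the rim decreases $x$ by $1$ and keeps $y$ fixed, hence increases the label by $1$; moving one step to the right increases $y$ by $1$ and keeps $x$ fixed, hence decreases the label by $1$. Reading the rim from its bottom-left box to its top-right box along the border strip, each of the $|R|-1$ steps is either "up" ($+1$) or "right" ($-1$). The label starts at $i$, ends at $j$, and the minimum value attained is $k$. If the strip has $u$ up-steps and $r$ right-steps, then $u + r = |R| - 1$ and $u - r = j - i$. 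The multiset $B_\ell$ has size $1 + (i-k) + (j-k) = i + j - 2k + 1$, so I need $i + j - 2k + 1 \leq |R| = u + r + 1$, i.e. $i + j - 2k \leq u + r$, and $|R| - |B_\ell| = (u+r) - (i+j-2k)$ even.

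The parity is the easy half: $(u+r) - (i+j-2k) \equiv (u+r) - (u-r) = 2r \equiv 0 \pmod 2$ once we use $i + j = 2k + \big((i-k)+(j-k)\big)$ and $j - i = u - r$ — concretely, $(u+r)-(i+j-2k)$ and $(u-r)-(i+j-2k) = (u-r)-(j-i) = 0$ differ by $2r$, so the quantity is even. For the inequality, the key step — and the one I expect to be the main obstacle — is a discrete "descent/ascent" estimate for the lattice path of labels along the rim: starting at height $i$, ending at height $j$, with unit up and down steps and minimum height $k$, one has $(i - k) + (j - k) \leq (\text{number of steps})$. Intuitively, to come down from $i$ to the minimum $k$ takes at least $i-k$ down-steps, and to climb back up from $k$ to $j$ takes at least $j-k$ up-steps, and these two families of steps are disjoint (the down-steps used before the first time the minimum is reached versus the up-steps used after the last time it is reached); hence the total number of steps $u+r = |R|-1$ is at least $(i-k)+(j-k) = |B_\ell| - 1$, which is exactly what is needed. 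I would make this precise by fixing an index $m$ at which the running label equals $k$, noting there are at least $i-k$ down-steps among the first $m$ steps and at least $j-k$ up-steps among the remaining ones, and adding. This uses nothing about the global shape of $\pi$ beyond the fact that each $R_\ell$ really is a border strip, which is guaranteed by the definition of the rim decomposition in Section~\ref{representations}.
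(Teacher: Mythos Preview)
Your overall strategy matches the paper's exactly: reduce to a single rim $R_\ell$ and show $|B_\ell|\le |R_\ell|$ with even difference, then sum. Your inequality argument via the $\pm 1$ lattice path of labels is correct and is just a rephrasing of the paper's geometric bound (the paper bounds the minimum label by $k\ge i+j-n+1$, equivalent to your ``at least $i-k$ down-steps then at least $j-k$ up-steps'').

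There is, however, a slip in your parity computation. You write $(u-r)-(i+j-2k)=(u-r)-(j-i)=0$, but $i+j-2k\neq j-i$ in general; what is true is $(u-r)-(i+j-2k)=(j-i)-(i+j-2k)=2(k-i)$, which is even but not zero. With this correction your argument still works: $(u+r)-(i+j-2k)$ differs from $2(k-i)$ by $2r$, hence equals $2(r+k-i)$, which is even. The paper sidesteps this little computation by first observing that any rim (after peeling off the outer ones) has its bottom-left box in column~$1$ and its top-right box in row~$1$, which gives the closed form $|R_\ell|=2n-i-j-1$; then $|R_\ell|-|B_\ell|=2\bigl(k+n-1-(i+j)\bigr)$, so parity is immediate and nonnegativity reduces to the single bound $k\ge i+j-n+1$. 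Your route avoids that closed form at the cost of the extra parity bookkeeping.
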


\begin{proof}
 Rule B is more suited to prove this proposition. We will clearly get the result if we can prove that for each rim $R_t$, the number of boxes $r_t$ in $R_t$ is greater or equal than the cardinality $b_t$ of the multiset $B_t$, and the difference between the two quantities is even. Therefore we fix a rim $R_t$, and we use the notations $i,j,k$ from the definition of Rule B. We compute easily  $r_t=2n-i-j-1$ while $b_t=i+j-2k+1$. The difference is thus $\delta_t:=r_t-b_t=2(k+n-1-(i+j))$, which is obviously even. It is also nonnegative: indeed, if $c, c'$ are the extreme boxes with the labels $i,j$ respectively, then the minimal value of $k$ is obtained if the rim consists of the boxes to the right of $c$ together with the boxes below $c'$. At the intersection of these two sets of boxes, the value of $k$ is equal to $i+j-n+1$, which shows that $\delta_t$ is nonnegative and completes the proof.
\end{proof}

We will use this result in Section~\ref{sub:realroots}.

\subsection{The conjectures}
\label{sub:conjectures}

  The rest of this section will consist of the statement of Conjectures ~\ref{conj:realroots}, ~\ref{conj:dec},~\ref{conj:gpi} and ~\ref{conj:posX}, together with evidence in their support. The first three conjectures are related to values of the polynomials $A_\pi(t)$ when the argument $t$ is a negative integer; what these conjectures imply is that some mysterious combinatorics occur around these values $A_\pi(-p)$. The fourth conjecture states simply that the polynomials $A_\pi(t)$ have positive coefficients, and is thus slightly different in spirit than the other ones, though they are clearly related.

The principal evidence in support of the conjectures, as well as the source of their discovery, is the following result:

\begin{fact}
Conjectures ~\ref{conj:realroots}, ~\ref{conj:dec} and ~\ref{conj:posX} are true for all matchings $\pi$ such that $\pi\leq 8$. Conjecture~\ref{conj:gpi} is true for all $n\leq 8$. 
\end{fact}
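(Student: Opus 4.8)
This statement is a finite computer verification, and the substance of the claim is precisely that the verification \emph{is} finite. For a fixed matching $\pi$ with $n=|\pi|$, each of the four conjectures is a statement about the single polynomial $A_\pi(t)$: about its real roots and their multiplicities (Conjecture~\ref{conj:realroots}), its values at the integers $1-n,\ldots,-1$ (Conjecture~\ref{conj:dec}), its value at $-n$ (Conjecture~\ref{conj:gpi}), and the signs of its coefficients (Conjecture~\ref{conj:posX}). Every one of these can be decided exactly once $A_\pi(t)$ is known explicitly. So the plan is: (i) for each matching $\pi$ with $|\pi|\leq 8$, produce $A_\pi(t)\in\mathbb{Q}[t]$ exactly; (ii) compute, directly from $\pi$, the combinatorial data needed to state the conjectures, namely the integers $m_p(\pi)$ via Rule A or Rule B of Section~\ref{sub:combdef} and the conjectured value of $A_\pi(-n)$ appearing in Conjecture~\ref{conj:gpi}; (iii) run the four tests.

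For step~(i) I would \emph{not} compute $A_{(\pi)_p}$ for $p$ up to $d(\pi)$ (which reaches $28$ when $|\pi|=8$) by brute-force FPL enumeration or by solving $\mathcal{H}\Psi=0$ and interpolating, since that would require groundstates indexed by matchings of size up to $36$. Instead I would use one of the two structural formulas of Section~\ref{sec:polynomials}. On the FPL side, \eqref{eq:apiX} expresses $A_\pi(t)=\sum_{\si\leq\pi}a_{\si}^\pi\,S_{\si}(t-n+1)$ as a sum over the finitely many subdiagrams $Y(\si)\subseteq Y(\pi)$, where each $S_{\si}$ is an explicit hook-content product and each $a_{\si}^\pi$ a nonnegative integer computable by the rules of~\cite{Thapper,CKLN,NadFPL1}. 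On the CPL side, \eqref{eq:psipiX} expresses $A_\pi(t)=\sum_a \C_{\pi,a}\,\Phi_a(t)$, where only the finitely many $a\geq\pi$ of size $n$ contribute, $\C$ is obtained by inverting the explicit matrix $C(1)$ of~\cite{artic41}, and $\Phi_a(t)$ is read off as a polynomial in $t$ from the contour integral displayed in Section~\ref{sec:pol_qKZ} by computing residues in $u_1,\ldots,u_n$. Carrying out both computations and checking that they agree with each other, and with directly computed small-$p$ values of $A_{(\pi)_p}$, gives a strong consistency check on the implementation. By Proposition~\ref{prop:polynomials} one may also reduce to one representative per conjugation class and strip any leading or trailing nested arches using $A_{(\pi)_\ell}(t)=A_\pi(t+\ell)$, which cuts down the workload.

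For step~(iii): Conjecture~\ref{conj:realroots} is checked by factoring $A_\pi(t)$ over $\mathbb{Q}$, equivalently by testing the predicted negative integer roots and reading off multiplicities from successive derivatives, and comparing with the multiplicities predicted from the $m_p(\pi)$ (Proposition~\ref{prop:compatroots} already guarantees compatibility of the total root count with $d(\pi)$); Conjectures~\ref{conj:dec} and~\ref{conj:gpi} are checked by exact rational evaluation of $A_\pi$ at $1-n,\ldots,-1$ and at $-n$; and Conjecture~\ref{conj:posX} by expanding $A_\pi(t)$ in powers of $t$ — the coefficients have denominator dividing $d(\pi)!$, so this is exact — and inspecting signs. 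All arithmetic is done over $\mathbb{Q}$, so there are no rounding issues and the verification is rigorous.

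The obstacle is practical, not conceptual: one needs a correct implementation of either the Thapper coefficients $a_{\si}^\pi$ or of the matrix $C(1)$, its inverse, and the residue evaluation of the $\Phi_a(t)$, and the computation must stay feasible for $|\pi|=8$, where there are $C_8=1430$ matchings and individual sums in \eqref{eq:apiX} or \eqref{eq:psipiX} can involve hundreds of terms with large integer coefficients. Once that machinery is in place the check is entirely mechanical; its only real content is the observation that the structural formulas of Section~\ref{sec:polynomials} turn each conjecture — a priori a statement about infinitely many integers $p$, or about a polynomial of unbounded degree — into a single finite, exact computation.
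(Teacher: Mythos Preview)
Your proposal is correct and matches the paper's own approach: the authors computed the polynomials $A_\pi(t)$ exactly in Mathematica via the CPL formula~\eqref{eq:psipiX} (the reference to~\eqref{eq:capi-tau} in the paper appears to be a typo for the formulas of Section~\ref{sec:pol_qKZ}) and then checked each conjecture directly from these exact expressions for all $1430$ matchings with $|\pi|=8$. One small caveat: to fully verify Conjecture~\ref{conj:realroots} you must also certify that the residual factor $Q_\pi(t)$ has \emph{no} real roots (not merely that the predicted integer roots occur with the predicted multiplicities), which your ``equivalently'' slightly elides; this is of course decidable exactly, e.g.\ by Sturm sequences.
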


The corresponding polynomials $A_\pi(t)$ were indeed computed in Mathematica for these values of $\pi$ thanks to Formula~\ref{eq:capi-tau}, and each conjecture was then checked from these exact expressions; note that there are 1430 matchings $|\pi|$ such that $|\pi|=8$. In Appendix~\ref{app:examples} we list the polynomials $A_\pi(t)$ for $|\pi|=4$. 

\subsubsection{Real roots}
\label{sub:realroots}

The first conjecture gives a complete description of all real roots of the polynomials $A_{\pi}(t)$:

\begin{conj} 
\label{conj:realroots}
 All the real roots of the polynomials $A_{\pi}(t)$ are negative integers, and $-p$ appears with multiplicity $m_p(\pi)$. Equivalently, we have a factorization:
\[
 A_{\pi}(t) = \frac{1}{|d(\pi)|!} \cdot \left(\prod_{p=1}^{|\pi|-1} (t+p)^{m_p(\pi)}\right)\cdot Q_{\pi} (t),
\]
where $Q_{\pi} (t)$ is a polynomial with integer coefficients and no real roots. 
\end{conj}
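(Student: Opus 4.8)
The plan is to break the statement into two parts that can be handled by different techniques: \textbf{(A)} the divisibility $\prod_{p=1}^{|\pi|-1}(t+p)^{m_p(\pi)}\mid d(\pi)!\,A_\pi(t)$, i.e.\ that $-p$ is a zero of $A_\pi$ of multiplicity \emph{at least} $m_p(\pi)$; and \textbf{(B)} that the resulting cofactor $Q_\pi(t)$ has no real root at all. These dovetail nicely. Since $\prod(t+p)^{m_p(\pi)}$ is monic, (A) already gives the integrality of $Q_\pi$ for free by Gauss's lemma, and by Proposition~\ref{prop:compatroots} it forces $\deg Q_\pi=d(\pi)-\sum_pm_p(\pi)$ to be a nonnegative \emph{even} integer --- exactly what is needed for (B) to be possible. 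Conversely (B) upgrades the ``at least'' of (A) to the ``exactly'' demanded in the statement, since any extra zero at $-p$ would be a real root of $Q_\pi$. So proving (A) and (B) is equivalent to the conjecture.

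For (A), I would first settle the case $m_p(\pi)=1$, namely $A_\pi(-p)=0$ whenever level $p$ is crossed by an arch (i.e.\ $\mathcal A_p^L(\pi)\cup\mathcal A_p^R(\pi)\neq\emptyset$). This is the exact analogue of the $p=1$ statement of Theorem~\ref{th:firstroot}, and I expect the same mechanism to work: express $\Psi_\pi(t)$ as the homogeneous limit of the multivariate polynomial extension of $\Psi_\pi$ (the $q$KZ solution) and specialize the spectral parameters attached to the points flanking the wall that separates region~(O) from region~(I), so that the hypothesis $\mathcal A_p^L\cup\mathcal A_p^R\neq\emptyset$ makes the $\pi$-component vanish. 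It also helps to observe, directly from Rule~$A$, that $m_1((\pi)_1)=0$ and $m_p((\pi)_1)=m_{p-1}(\pi)$ for $p\geq2$; together with $A_{(\pi)_\ell}(t)=A_\pi(t+\ell)$ from Proposition~\ref{prop:polynomials} this shows the whole conjecture is invariant under adding nested arches, so one may assume $\{1,2n\}\notin\pi$, and then Theorem~\ref{th:firstroot} already furnishes the factor $(t+1)$. The genuinely new content is the factors $(t+p)^{m_p(\pi)}$ with $2\leq p\leq|\pi|-1$ and $m_p$ possibly $\geq2$; here Rule~$B$ is the right bookkeeping, since the rim decomposition of $Y(\pi)$ records, rim by rim, exactly which $(t+p)$ must occur. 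I would therefore peel off the outer rim, induct on the number of rims (or on $d(\pi)$), and match the multiset $\{k\}\cup\{i,\dots,k+1\}\cup\{j,\dots,k+1\}$ of a rim against the vanishing of a suitable \emph{iterated} specialization of the multivariate polynomial; making this correspondence precise --- especially tracking the shift of the Rule~$B$ labels produced by removing a rim --- is where most of the work in (A) lies.

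Part (B) --- root-freeness of $Q_\pi$ --- is what I expect to be the main obstacle, since it is a positivity statement and the algebraic tools above do not see signs. A first reduction: the leading coefficient $1/H_\pi$ is positive, and if the coefficient-positivity Conjecture~\ref{conj:posX} holds then $A_\pi(t)>0$ for all $t\geq0$, so there is no nonnegative real root and one is left to exclude real roots in $(-\infty,0)$ other than the integers $-1,\dots,-(|\pi|-1)$. I would treat the two ends of this range separately. For the far end --- no real root $\leq-(|\pi|-1)$ --- the natural input is Conjecture~\ref{conj:dec} and Conjecture~\ref{conj:gpi}, which fix the signs of $A_\pi$ at the negative integers near $-(|\pi|-1)$ and at $-|\pi|$; combined with the known degree and leading term (hence the behaviour as $t\to-\infty$), this should rule out further real roots, so this sub-case is really about \emph{deducing} root-freeness from those conjectured special values and is probably best attacked together with them. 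For the near end --- no \emph{non-integer} real root strictly between $-(|\pi|-1)$ and $0$ --- one would want an interlacing or log-concavity property of the family $\{A_\sigma(t)\}_{\sigma\leq\pi}$, propagated through one of the expansions \eqref{eq:apiX} or \eqref{eq:psipiX}; failing such structure, a complete proof of (B) in general seems to require a genuinely new ingredient, most plausibly an explicit enough description of $Q_\pi(t)$ (or of $A_\pi(-p)$ for $p$ beyond $|\pi|-1$) from which positivity is manifest. Until then, (A) together with Proposition~\ref{prop:compatroots} and the verification for $|\pi|\leq8$ make (B) very safe to believe.
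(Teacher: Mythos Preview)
The statement you are addressing is explicitly a \emph{conjecture} in the paper: there is no proof to compare against. The paper supports Conjecture~\ref{conj:realroots} only through numerical verification for $|\pi|\le 8$, the closed formulas for rectangles and near-rectangles, and Theorem~\ref{th:firstroot}, which establishes the single factor $(t+1)$ when $m_1(\pi)=1$ (and not even its exact multiplicity). In the ``Further questions'' section the authors describe precisely your proposed route for part~(A) --- extending the multivariate-integral argument of Theorem~\ref{th:firstroot} to general $p$ --- as ``work in progress'' whose expressions become ``fairly more complicated and intricate than in the case $p=-1$''. So your outline for (A) is the paper's own suggested line of attack, not a new idea, and the paper already flags the step you call ``where most of the work in (A) lies'' as unresolved.

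What you have written is an honest research plan, not a proof, and you say so yourself. The split into (A) and (B), the use of Gauss's lemma, and the invocation of Proposition~\ref{prop:compatroots} to force $\deg Q_\pi$ even are all correct. The genuine gaps are: for (A), no mechanism is given to produce multiplicities $m_p(\pi)\ge 2$ (the rim-induction heuristic is suggestive but nothing is proved beyond the $p=1$ case already in the paper); for (B), your argument is conditional on the other open Conjectures~\ref{conj:dec}, \ref{conj:gpi}, \ref{conj:posX}, and even granting those you would still need the interlacing/log-concavity ingredient you mention to exclude non-integer roots in $(-|\pi|+1,0)$. In short, your proposal accurately surveys the landscape and agrees with the paper's own assessment, but neither you nor the paper proves the conjecture.
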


We must verify first that the definition of the multiplicities is coherent with this conjecture. We know indeed by Theorem~\ref{zuber} that $A_\pi(t)$ has degree $d(\pi)$ in $t$; furthermore  the degree of $Q_{\pi}(t)$ is necessarily even, since it is a real polynomial with no real roots. This means that the sum of the $m_p(\pi)$ should not be larger than $d(\pi)$, and should be of the same parity: this is precisely the content of Proposition~\ref{prop:compatroots}.

 It is also immediately checked that the conjecture is compatible with the two stability properties from Proposition~\ref{prop:polynomials}, that is $A_\pi(t)=A_{\pi^*}(t)$ and $A_{(\pi)_\ell}(t)=A_{\pi}(t+\ell)$ for any nonnegative integer $\ell$. Indeed $m_p(\pi)=m_p(\pi^*)$ is immediately seen from either one of the rules, as is $m_{p+\ell}\left((\pi)_\ell\right)=m_p(\pi)$.
\medskip

As an example, the polynomial for the matching $\pi_0$ of Section~\ref{sub:combdef} is:
\begin{align*}
 A_{\pi_0}(t)=&\frac{(2 + t)(3 + t)^2(4 + t)^2(5 + t)^2(6 + t)(7 + t)}{145152000}\\
   &\times (9 t^6+284 t^5+4355 t^4+39660 t^3+225436 t^2+757456 t+123120) \label{ex_n8},
\end{align*}

In the articles~\cite{artic27} for the FPL case, and~\cite{artic38} for the CPL case, the following formula was established: 
\[
 A_{()_a()_b}(t)=\prod_{i=1}^a \prod_{j=1}^b \frac{t+i+j-1}{i+j-1}.
\]

This is exactly what Conjecture~\ref{conj:realroots} predicts in this case (the constant factor is given by Theorem~\ref{zuber}). This is perhaps easier to see with the definition of the $m_i(\pi)$ by rule B. Here the Young diagram is a rectangle, and it is easily seen that each box will correspond to a root of the polynomial, matching precisely the expression above.

 There is an extension of this ``rectangular'' case in the article~\cite{CasKrat}, the results of which can be reformulated as a computation of the polynomials $A_\pi(t)$ when the diagram $Y(\pi)$ is formed of a rectangle together with one more line consisting of one or two boxes, or two more lines with one box each. Then a simple rewriting of the formulas of Theorems~3.2 and ~4.2 in~\cite{CasKrat} shows that the polynomials have indeed\footnote{we did not actually prove that the polynomials $Q_\pi(t)$ only have complex roots when they are of degree 4, though we tested several values; when $Q_\pi(t)$ has degree 2, then from the explicit form in \cite[Theorem~3.2]{CasKrat} one checks that it has a negative discriminant.} the form predicted by Conjecture~\ref{conj:realroots}. 
\medskip  

In Section~\ref{sec:firstroot}, we will give another piece of evidence for the conjecture, by showing that $-1$ is a root of $A_\pi(t)$ as predicted, that is when there is no arch between $1$ and $2n$ in the matching $\pi$; note though that we will not prove that we have multiplicity $m_1(\pi)=1$ in this case.

\subsubsection{Values for some negative parameters}
We are now interested in the values of the polynomial $A_\pi(t)$ is, when the argument $t$ is specialized to a negative integer which is not a root. Note first that although $A_\pi(t)$ does not have integer coefficients, we have the following:
\begin{prop}
\label{prop:integervalues}
 Let $\pi$ be a matching, $p>0$ an integer; then $A_\pi(-p)$ is an integer.
\end{prop}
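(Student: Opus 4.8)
The plan is to use the two polynomial expressions for $A_\pi(t)$ recalled in Section~\ref{sec:polynomials}, and show that each building block takes integer values at negative integers. The cleanest route uses the FPL expression~\eqref{eq:apiX}: since the coefficients $a_\si^\pi$ are nonnegative integers, it suffices to prove that $S_\si(N)\in\mathbb{Z}$ for every negative integer argument $N$ (here $N=-p-n+1$, which ranges over negative integers as $p>0$). Recall that $S_\si(N)=\frac{1}{H_\si}\prod_{u\in Y(\si)}(N+c(u))$, and that for $N$ a \emph{nonnegative} integer this equals the number of semistandard Young tableaux of shape $Y(\si)$ with entries in $\{1,\ldots,N\}$, hence is an integer. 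So the real content is to pass from ``integer for all large positive $N$'' to ``integer for all integer $N$''.

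The key step is the following general fact: if a polynomial $S(t)\in\mathbb{Q}[t]$ takes integer values at all sufficiently large positive integers, then it takes integer values at \emph{all} integers. This is standard: write $S(t)$ in the binomial basis $S(t)=\sum_{k} c_k\binom{t}{k}$ with $c_k\in\mathbb{Q}$; the hypothesis that $S(m)\in\mathbb{Z}$ for all large $m$ forces all $c_k\in\mathbb{Z}$ (by the usual finite-difference argument: $c_k=\sum_{j}(-1)^{k-j}\binom{k}{j}S(m_0+j)$ for any base point $m_0$, so taking $m_0$ large gives $c_k\in\mathbb{Z}$), and then $S(n)=\sum_k c_k\binom{n}{k}\in\mathbb{Z}$ for every integer $n$ since $\binom{n}{k}\in\mathbb{Z}$ for all integers $n$. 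Applying this to $S=S_\si$, we get $S_\si(-p-n+1)\in\mathbb{Z}$, and then~\eqref{eq:apiX} expresses $A_\pi(-p)$ as a $\mathbb{Z}$-linear combination of integers, hence $A_\pi(-p)\in\mathbb{Z}$.

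I should double-check the numerology: in~\eqref{eq:apiX} the argument of $S_\si$ is $t-n+1$, so $A_\pi(-p)=\sum_{\si\le\pi}a_\si^\pi\, S_\si(-p-n+1)$, and $-p-n+1$ is a negative integer (indeed $\le -n$) for every $p>0$, so the hypothesis of the lemma is vacuously irrelevant to which integers we plug in — the lemma gives integrality at all integers at once. One subtlety worth a line: one must know $S_\si$ is integer-valued at infinitely many points, which is exactly the hook-content/SSYT interpretation cited in the text, valid for all $N\ge 1$. An alternative proof avoiding~\eqref{eq:apiX} entirely would use~\eqref{eq:psipiX}: $A_\pi(t)=\sum_a \C_{\pi,a}\Phi_a(t)$ with $\C_{\pi,a}\in\mathbb{Z}$, but then one still needs $\Phi_a(-p)\in\mathbb{Z}$, which is less transparent than the SSYT argument, so I would present the $S_\si$ route as the main proof.

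The only real obstacle is conceptual rather than computational: making sure the reader accepts that a polynomial integer-valued on a cofinite set of positive integers is integer-valued everywhere, i.e. citing or proving the binomial-basis lemma. Everything else — nonnegativity and integrality of $a_\si^\pi$, the SSYT interpretation of $S_\si(N)$ for $N\ge 1$ — is already recorded in the excerpt, so the proof is short. I would therefore structure it as: (i) state and prove (or cite) the integer-valued-polynomial lemma; (ii) apply it to each $S_\si$ using the hook-content formula; (iii) conclude via~\eqref{eq:apiX}.
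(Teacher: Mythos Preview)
Your proof is correct, but the paper takes a shorter and more direct route: it applies the integer-valued-polynomial lemma \emph{directly} to $A_\pi(t)$ rather than to each $S_\si$. Indeed, $A_\pi(p)=A_{(\pi)_p}$ is manifestly a nonnegative integer for every nonnegative integer $p$ (it counts FPL configurations), so one can expand $A_\pi(t)=\sum_{i=0}^{d}c_i\binom{t+d-i}{d}$ and read off $c_i\in\mathbb{Z}$ by plugging in $t=0,1,\ldots,d$; integrality at negative integers follows. This bypasses Equation~\eqref{eq:apiX} and the hook-content formula entirely. Your approach has the minor advantage of proving the stronger intermediate fact that each $S_\si$ is integer-valued on all of $\mathbb{Z}$, but at the cost of invoking the decomposition~\eqref{eq:apiX} (which itself relies on results from~\cite{Thapper,NadFPL1}) where the paper needs only the definition of $A_\pi(t)$.
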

\begin{proof}
This is standard: for $d=d(\pi)$, the polynomials $\binom{t+d-i}{d},i=0\ldots d$, form a basis of the space of complex polynomials in $t$ of degree $\leq d$. Since $A_\pi(t)$ has degree $d$, we can write 
\begin{equation}\label{eq:expansion}
A_\pi(t)=\sum_{i=0}^ dc_i \binom{t+d-i}{d}.                                                                                                                                                                                                                                                                       \end{equation}
Now $A_\pi(p)=A_{(\pi)_p}$ is an integer when $p$ is a nonnegative integer. Plugging in successively $t=0,1,2,\ldots,d$ in~\eqref{eq:expansion} shows then that $c_0,c_1,\ldots,c_d$ are in fact all integers, which in turn implies that for negative integers $-p$ we have also that $A_\pi(-p)$ is an integer. 
\end{proof}

So let $\pi$ be a matching, and $p\in\lb 0,|\pi|\rb$ be such that $m_p(\pi)=0$. By Rule A in Section~\ref{sub:combdef}, this means that there are no arches that separate the outer part of $\pi$ consisting of the first $p$ and the last $p$ points (denote it by $\alpha$) from the inner part (denote it by $\beta$), as shown in the picture:
\[
 \pi=
 \begin{tikzpicture}[scale=.25,baseline=0pt]
  \fill [blue!10!white] (4,0) ..  controls (4,4) and (-4,4) .. (-4,0) -- (-2,0) .. controls (-2,2) and (2,2) .. (2,0) -- cycle;
  \draw [green, snake=brace, mirror snake, segment amplitude=1pt] (-4,0) -- (-2,0);
  \draw [green, snake=brace, mirror snake, segment amplitude=1pt] (2,0) -- (4,0);
  \draw [black] (-3,-.5) node {\tiny $p$};
  \draw [black] (3,-.5) node {\tiny $p$};
  \draw [black] (0,0) node {$\beta$};
  \draw [black] (0,2) node {$\alpha$};
 \end{tikzpicture}
\]

Here $\alpha$ and $\beta$ can be naturally considered as matchings in their own right (when properly relabeled), and we introduce the notation $\pi=\alpha \circ \beta$ in this situation. It turns out that the following numbers play a special role in our second conjecture:

\begin{defi}[$G_\pi$] For any matching $\pi$ we define 
\[
G_\pi:=A_{\pi}(-|\pi|).
\]
\end{defi}

By Proposition~\ref{prop:integervalues} above, the $G_\pi$ are actually integers. 
The next conjecture says that these numbers seem to appear naturally when evaluating our polynomials at certain negative integers:

\begin{conj} 
\label{conj:dec}
 Let $\pi$ be a matching and $p$ be an integer between $1$ and $|\pi|-1$ such that $m_p(\pi)=0$, and write $\pi=\alpha \circ \beta$ with $|\alpha|=p$. We have then the following factorization:
 \[
  A_{\pi}(-p)=  G_\alpha A_{\beta}.
 \]
\end{conj}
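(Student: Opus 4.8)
The plan is to work with the CPL expression~\eqref{eq:psipiX}, $A_\pi(t)=\sum_a \C_{\pi,a}\,\Phi_a(t)$, and to understand the specialization $t=-p$ when $\pi=\alpha\circ\beta$ with $|\alpha|=p$. The starting observation is that when $m_p(\pi)=0$ the matching $\pi$ splits cleanly: the first $p$ and last $p$ points are matched among themselves according to the pattern $\alpha$ (read as a matching with $p$ arches, the outer block), and the middle $2|\beta|$ points carry $\beta$. In the increasing-sequence representation, the matchings $a$ with $\C_{\pi,a}\neq 0$ satisfy $\pi\le a$, and I expect that for such $a$ the outer/inner splitting of $\pi$ forces a compatible splitting of $a$: every $a$ in the support decomposes as (some matching of the $p$ outer points on each side) $\circ$ (some matching $b\ge\beta$ of the inner points). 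One should then show that $\C_{\pi,a}$ factors as $\C_{\alpha,a^{\mathrm{out}}}\cdot\C_{\beta,b}$ over this decomposition; this kind of multiplicativity of the change-of-basis matrix across a ``separating'' cut is exactly the sort of statement made plausible by the stability Lemma~\ref{lem:sta} and the triangularity of Proposition~\ref{prop:Bases}, and it may already be extractable from~\cite{artic47,artic41}.

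Granting such a factorization, the next step is the key analytic input: evaluating $\Phi_a(-p)$ when $a=a^{\mathrm{out}}\circ b$. Recall from Section~\ref{sec:pol_qKZ} that $\Phi_a(t)$ is obtained from the contour integral $\Phi_{(a)_t}$ by analytic continuation in the number of added nested arches, and that adding $\ell$ nested arches to $\beta$ multiplies the integrand by $\prod_i(1+u_i)^\ell$. The mechanism I would exploit is that at $t=-p$ the ``$p$ nested arches'' being formally removed are precisely the ones that, in $\pi=\alpha\circ\beta$, constitute the innermost $p$ layers of the outer block $\alpha$ — so specializing $t=-p$ should collapse the outer integral to the \emph{constant} $\Phi_{\alpha\text{-part}}(-p)$ evaluated at its own top, i.e. to $A_\alpha(-p-?\,)$, and in fact (after matching the normalizations $1/d(\pi)!$ versus $1/d(\beta)!$ coming from Theorem~\ref{zuber}) to $G_\alpha=A_\alpha(-|\alpha|)$, while the inner integral survives untouched and reassembles, via $\sum_b\C_{\beta,b}\Phi_b(0)=\Psi_\beta(0)=A_\beta$, into the factor $A_\beta$. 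Thus $A_\pi(-p)=G_\alpha\,A_\beta$. As a sanity check one verifies the known rectangular case $\pi=()_a()_b$, where $\alpha=()_a$, $\beta=()_{b}$ shifted, $G_{()_a}=A_{()_a}(-a)$, and the product formula $A_{()_a()_b}(t)=\prod_{i,j}(t+i+j-1)/(i+j-1)$ indeed factors as predicted at $t=-a$.

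The main obstacle will be making the factorization $\C_{\pi,\alpha\circ\beta\text{-type}}=\C_{\alpha,\cdot}\,\C_{\beta,\cdot}$ precise and proving that the support of $\C_{\pi,\cdot}$ really is confined to matchings respecting the cut. The condition $m_p(\pi)=0$ is exactly the combinatorial hypothesis that should make this work — Rule A says no arch of $\pi$ crosses the cut at level $p$ — but transferring ``no crossing in $\pi$'' to ``no crossing in any $a\ge\pi$ occurring with nonzero coefficient'' requires a genuine argument about the matrix $C(\tau)$ (or its inverse), presumably using the explicit form in~\cite[Appendix A]{artic41} together with Proposition~\ref{prop:Bases}. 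A secondary technical point is bookkeeping the factorials: one must track how $d(\pi)=d(\alpha)+d(\beta)+$ (cross terms, which vanish when $m_p=0$) distributes, so that the normalizing constant on the left reproduces $G_\alpha\cdot A_\beta$ exactly rather than up to a rational factor. I would expect the integral manipulation, once the combinatorial factorization is in hand, to be comparatively routine.
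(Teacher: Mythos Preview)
The statement you are addressing is labeled a \emph{conjecture} in the paper, and the paper does not supply a proof: it records numerical verification for $|\pi|\le 8$, checks sign compatibility with Conjecture~\ref{conj:realroots}, and in Section~7 explicitly lists it among the open problems, suggesting that an extension of the multivariate-integral method of Theorem~\ref{th:firstroot} might eventually work but that ``the expressions obtained are fairly more complicated and intricate than in the case $p=-1$.'' So there is no paper proof to compare your proposal against.

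Your outline is a reasonable line of attack, but it has genuine gaps beyond those you already flag. First, your parenthetical claim that $d(\pi)=d(\alpha)+d(\beta)+\text{(cross terms vanishing when $m_p=0$)}$ is false: already for $\pi=()()()$ with $p=2$ one has $\alpha=()()$, $\beta=()$, $d(\pi)=3$, but $d(\alpha)+d(\beta)=1$. The Young diagram of $\alpha\circ\beta$ is not a disjoint union of $Y(\alpha)$ and $Y(\beta)$, so the ``bookkeeping of factorials'' is not a secondary issue. Second, the support restriction you hope for is not true in the naive form: matchings $a\ge\pi$ can and do have arches crossing the cut at level $p$, so if a factorization $\C_{\pi,a}=\C_{\alpha,\cdot}\,\C_{\beta,\cdot}$ holds at all, it cannot be because the support of $\C_{\pi,\cdot}$ is confined to split matchings. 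Third, the heuristic that ``setting $t=-p$ collapses the outer integral to $G_\alpha$'' is not an argument: $\Phi_a(t)$ at a negative integer does not obviously localize in this way, and the paper's own proof of the much weaker single case $p=1$ (Theorem~\ref{th:firstroot}) already required introducing a new multivariate integral~\eqref{eq:minusone} and a nontrivial comparison in the space $\mathcal{V}_{n-1}$, rather than any direct manipulation of $\Phi_a(t)$.

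In short: the conjecture is open, your strategy is plausible in spirit, but the crucial step---a factorization of $\C$ across the cut, or an equivalent identity for $\Phi_a(-p)$---is neither proved nor obviously true as stated.
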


Here we need to verify a certain sign compatibility with Conjecture~\ref{conj:realroots}, which predicts that $A_{\pi}(-p)$ has sign $(-1)^{M_p}$ where $M_p=\sum_{i\leq p}m_i(\pi)$. Now for this range of $i$ we have obviously $m_i(\pi)=m_i(\alpha)$ by rule A, so that $A_{\pi}(-p)$ has sign $(-1)^{d(\alpha)}$ by Proposition~\ref{prop:compatroots}; but this is then (conjecturally) the sign of $G_\alpha$ (cf. Proposition~\ref{prop:propgpi} below), which is coherent with the signs in Conjecture~\ref{conj:dec}.

\subsubsection{Properties of the $G_\pi$}
\label{sub:gpi}
Conjecture~\ref{conj:dec} shows that the numbers $G_\pi$ seem to play a special role in the values of $A_\pi(t)$ at negative integers. 

\begin{prop}
\label{prop:propgpi}
 For any matching $\pi$, $G_\pi = G_{(\pi)}$ and $G_\pi=G_{\pi^*}$. Moreover, Conjecture~\ref{conj:realroots} implies that $sign(G_\pi)=(-1)^{d(\pi)}$.
\end{prop}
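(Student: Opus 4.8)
The plan is to establish the two symmetry statements first, since they are essentially immediate, and then to explain the sign statement as a consequence of Conjecture~\ref{conj:realroots}. For the invariance $G_\pi=G_{(\pi)}$, recall from Proposition~\ref{prop:polynomials} that $A_{(\pi)_\ell}(t)=A_\pi(t+\ell)$ for all nonnegative integers $\ell$, and this identity persists for all $t$ by polynomiality. Taking $\ell=1$ and evaluating at $t=-|\pi|-1=-|(\pi)|$ gives
\[
G_{(\pi)}=A_{(\pi)}(-|(\pi)|)=A_\pi\bigl(-|(\pi)|+1\bigr)=A_\pi(-|\pi|)=G_\pi.
\]
For the invariance under conjugation, Proposition~\ref{prop:polynomials} also gives $A_\pi(t)=A_{\pi^*}(t)$ as polynomials; since $|\pi|=|\pi^*|$, evaluating both sides at $t=-|\pi|$ yields $G_\pi=G_{\pi^*}$ directly.

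For the sign statement, assume Conjecture~\ref{conj:realroots}, which gives the factorization
\[
A_\pi(t)=\frac{1}{d(\pi)!}\Bigl(\prod_{p=1}^{|\pi|-1}(t+p)^{m_p(\pi)}\Bigr)Q_\pi(t),
\]
with $Q_\pi$ having no real roots. I would evaluate this at $t=-|\pi|=-n$. Each factor $(t+p)$ with $1\le p\le n-1$ becomes $p-n<0$, so the product $\prod_p (t+p)^{m_p(\pi)}$ contributes a sign $(-1)^{\sum_p m_p(\pi)}$. Since $Q_\pi$ is a real polynomial with no real roots, it has constant sign on all of $\mathbb R$; its leading coefficient is positive because the leading coefficient of $A_\pi(t)$ is $1/H_\pi>0$ and $d(\pi)!>0$, so $Q_\pi(t)>0$ for all real $t$, in particular $Q_\pi(-n)>0$. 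Hence $\operatorname{sign}(G_\pi)=(-1)^{\sum_p m_p(\pi)}$. It remains to identify this exponent's parity with that of $d(\pi)$: by Proposition~\ref{prop:compatroots}, $d(\pi)-\sum_p m_p(\pi)$ is even, so $(-1)^{\sum_p m_p(\pi)}=(-1)^{d(\pi)}$, giving $\operatorname{sign}(G_\pi)=(-1)^{d(\pi)}$ as claimed.

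The only subtle point worth double-checking is that $-n=-|\pi|$ is indeed \emph{not} one of the roots $-p$, $1\le p\le n-1$, so that $G_\pi\neq 0$ and "sign" is meaningful; this holds because $n$ lies outside the range $\{1,\ldots,n-1\}$. Thus there is no genuine obstacle here: the symmetry parts are formal consequences of Proposition~\ref{prop:polynomials}, and the sign part is a bookkeeping argument combining the conjectured factorization with the parity statement of Proposition~\ref{prop:compatroots}. The mildest care needed is in pinning down the sign of $Q_\pi$ via the positivity of the leading coefficient $1/H_\pi$ of $A_\pi(t)$.
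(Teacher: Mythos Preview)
Your proof is correct and follows essentially the same approach as the paper: the two symmetry statements are derived directly from Proposition~\ref{prop:polynomials}, and the sign statement from Conjecture~\ref{conj:realroots} together with the positivity of the leading coefficient $1/H_\pi$. The only cosmetic difference is that the paper argues the sign more directly---since all real roots lie in $[1-|\pi|,-1]$, the sign of $A_\pi(-|\pi|)$ agrees with the sign of $A_\pi(t)$ as $t\to-\infty$, namely $(-1)^{d(\pi)}$---thereby bypassing the explicit factorization and the parity statement of Proposition~\ref{prop:compatroots}.
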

\begin{proof}
 The first two properties are immediately derived from the polynomial identities $A_\pi(t+1)=A_{(\pi)}(t)$ and $A_\pi(t)=A_{\pi^*}(t)$ respectively, given in Proposition~\ref{prop:polynomials}. Then, if all real roots of $A_\pi(t)$ are between $-1$ and $1-|\pi|$ as predicted by Conjecture~\ref{conj:realroots}, the sign of $G_\pi$ must be equal to the sign of $(-1)^{d(\pi)}$, since $A_\pi(t)$ has leading term $t^{d(\pi)}/H_{\pi}$ by Theorem~\ref{zuber}.
\end{proof}
 

 We can compute some special cases, corresponding to $Y(\pi)$ being a rectangle, or a rectangle plus an extra row with just one box:

\begin{prop}
 We have $G_{()_a()_b}=(-1)^{ab}$, while $G_{(()())_{a-2}()_b}=(-1)^{ab+1}(a+1)$. 
\end{prop}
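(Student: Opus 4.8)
The plan is to use the explicit product formulas that are already quoted in the excerpt. For the first formula, recall that $A_{()_a()_b}(t)=\prod_{i=1}^a\prod_{j=1}^b\frac{t+i+j-1}{i+j-1}$, so I would simply substitute $t=-|\pi|$. Since $\pi=()_a()_b$ has $|\pi|=a+b$ arches, this yields $G_{()_a()_b}=\prod_{i=1}^a\prod_{j=1}^b\frac{i+j-1-a-b}{i+j-1}$. The key observation is that the numerator factors $i+j-1-a-b$ range, as $i$ runs from $1$ to $a$ and $j$ from $1$ to $b$, over exactly the negatives of the numbers $a+b+1-i-j$, i.e.\ over $-(i'+j'-1)$ for $i'=a+1-i\in\{1,\dots,a\}$, $j'=b+1-j\in\{1,\dots,b\}$. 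Hence the numerator product equals $(-1)^{ab}\prod_{i'=1}^a\prod_{j'=1}^b (i'+j'-1)$, which cancels the denominator, leaving $G_{()_a()_b}=(-1)^{ab}$.

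For the second formula I would use the reformulation of \cite{CasKrat} mentioned just before the statement: when $Y(\pi)$ is a rectangle plus one extra row of a single box, the polynomial $A_\pi(t)$ is given explicitly by (a rewriting of) Theorem~3.2 of \cite{CasKrat}. The matching $(()())_{a-2}()_b$ has $Y(\pi)$ equal to an $(a-1)\times(\cdot)$-type shape; I would first pin down precisely which rectangle-plus-one-box shape it is and what $|\pi|$ equals (here $|\pi|=a+b$, matching the first case when the extra box is absent), then evaluate the explicit polynomial at $t=-(a+b)$. The structure of that formula is a product of the rectangular factor (which by the computation above contributes $(-1)^{ab}$ or a close variant) times a correction factor of the form $\bigl(1+\frac{\text{something linear in }t}{\text{constant}}\bigr)$ coming from the extra box; evaluating the linear correction at $t=-(a+b)$ should produce precisely the factor $-(a+1)$.

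Concretely, the main steps are: (i) state the two product/closed formulas being invoked, with correct indices for the specific $\pi$; (ii) carry out the sign-reversal symmetry $i\mapsto a+1-i$, $j\mapsto b+1-j$ in the rectangular product to get $(-1)^{ab}$; (iii) for the second case, substitute into the \cite{CasKrat} formula and simplify, tracking the extra linear factor; (iv) check the sign against Proposition~\ref{prop:propgpi}, namely that $(-1)^{ab}=(-1)^{d(\pi)}$ with $d(\pi)=ab$ in the first case, and $(-1)^{ab+1}=(-1)^{d(\pi)}$ with $d(\pi)=ab+1$ in the second, confirming consistency. One should also double-check the degenerate small cases ($a$ or $b$ equal to $1$, or $a=2$) so that the shape "$(()())_{a-2}$" is legitimate.

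The main obstacle I expect is purely bookkeeping: correctly identifying the Young diagram $Y\bigl((()())_{a-2}()_b\bigr)$ and its number of boxes, and then matching the indices in the \cite{CasKrat} formula (which is stated there for a general rectangle-plus-a-box and needs to be specialized), so that the evaluation at $t=-|\pi|$ lines up cleanly. Once the formula is correctly transcribed, the evaluation itself is a short telescoping/symmetry computation of the kind done in step (ii), so no genuinely hard estimate or new idea is needed — the difficulty is entirely in getting the combinatorial dictionary between $\pi$, its diagram, and the cited formula exactly right.
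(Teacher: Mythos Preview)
Your approach is exactly the one the paper indicates: its entire proof is the sentence ``This is easily proved by using the explicit formulas for such $\pi$ which were mentioned in Section~\ref{sub:realroots},'' namely the product formula for $A_{()_a()_b}(t)$ and the Caselli--Krattenthaler formulas for a rectangle plus one extra box. Your detailed computation for the rectangular case is correct, and your plan for the second case (identify the shape, specialize the cited formula, evaluate at $t=-(a+b)$) is precisely what is intended; the only thing left is the bookkeeping you already flag, which the paper does not spell out either.
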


This is easily proved by using the explicit formulas for such $\pi$ which were mentioned in Section~\ref{sub:realroots}. Finally, the most striking features about these numbers are conjectural:

\begin{conj}
\label{conj:gpi}
 For any positive integer $n$, we have
   \begin{align}
   \sum_{\pi : |\pi|=n} |G_\pi|&= A_n\quad\text{and}\quad\sum_{\pi : |\pi|=n} G_\pi = (-1)^{\frac{n(n-1)}{2}}\left(A_n^V\right)^2\label{eq:mysterious}\\
     G_{()^n}&=\begin{cases}
               (-1)^{\frac{n(n-1)}{2}}\left(A_{n+1}^V\right)^2\quad \text{if $n$ is even}; \\
               (-1)^{\frac{n(n-1)}{2}}\left(A_{n}^VA_{n+2}^V\right)\quad\text{if $n$ is odd}.  
              \end{cases}\label{eq:mysterious2}
 \end{align}
\end{conj}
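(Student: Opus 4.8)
The natural strategy is to push everything into the world of the bivariate polynomials $\Psi_\pi(\tau,t)$ of Section~\ref{sec:pol_qKZ}. Granting the sign part of Conjecture~\ref{conj:realroots} (equivalently Proposition~\ref{prop:propgpi}, so that $\mathrm{sign}(G_\pi)=(-1)^{d(\pi)}$), we have $|G_\pi|=(-1)^{d(\pi)}\Psi_\pi(1,-n)$. The substitution $u_i\mapsto -u_i$ in the contour integrals defining $\Phi_a$ gives $\Phi_a(-\tau,t)=(-1)^{d(a)}\Phi_a(\tau,t)$, and combined with the parity relation~\eqref{eq:capi-tau} for the entries of $C^{-1}(\tau)$ this yields $\Psi_\pi(-\tau,t)=(-1)^{d(\pi)}\Psi_\pi(\tau,t)$; hence $|G_\pi|=\Psi_\pi(-1,-n)$. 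Setting $\mathcal{S}_n(\tau,t):=\sum_{|\pi|=n}\Psi_\pi(\tau,t)$, the first two assertions become the two specializations
\[
 \mathcal{S}_n(-1,-n)=A_n, \qquad \mathcal{S}_n(1,-n)=(-1)^{\frac{n(n-1)}{2}}\bigl(A_n^V\bigr)^2 ,
\]
and the plan is to prove both at once by obtaining a closed form for $\mathcal{S}_n(\tau,t)$ and then setting $\tau=\pm1$, $t=-n$.

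\textbf{The sum rules.} For $t=p\geq0$ one has $\mathcal{S}_n(\tau,p)=\sum_{|\pi|=n}\Psi_{(\pi)_p}(\tau)$, the homogeneous limit of a sum of $q$KZ--solution components over all matchings of size $n+p$ that carry $p$ prescribed nested arches. Such a \emph{partial} sum rule should be extractable from the full multi-parameter sum rule of Di~Francesco and Zinn-Justin -- which expresses $\sum_\pi\Psi_\pi(z_1,\dots,z_{2N})$ through a Schur polynomial of the staircase $\delta_{N-1}$ -- by sending the $2p$ spectral variables attached to the nested arches to their frozen values; this should collapse that Schur polynomial to a hook-content / Schur-type expression in $(\tau,p)$ alone, which one then extends by polynomiality to $t=-n$. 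The remaining step is an evaluation: the appearance of the \emph{square} $(A_n^V)^2$ at $\tau=1,\ t=-n$ strongly suggests that there the expression factors into two copies of (the homogeneous limit of) Kuperberg's partition function for vertically symmetric ASMs, while at $\tau=-1,\ t=-n$ it should collapse to the ordinary ASM number $A_n$. Producing this factorization and this collapse is the technical heart of this half.

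\textbf{The value $G_{()^n}$.} Here $\pi=()^n$ is the maximal matching, so $Y(\pi)=\delta_{n-1}$ and $a(\pi)=(1,3,\dots,2n-1)$. One first needs a closed form for $A_{()^n}(t)=\Psi_{()^n}(t)$, obtainable either from the FPL expansion $\sum_\sigma c_\sigma S_\sigma(t-n+1)$ in hook-content polynomials (Equation~\eqref{eq:apiX}) or from the CPL contour-integral representations; by Conjecture~\ref{conj:realroots} this polynomial should have the shape $\tfrac{1}{\binom n2!}(t+1)\,Q_{()^n}(t)$ with $Q_{()^n}$ of degree $\binom n2-1$. Evaluating at $t=-n$, and using the reciprocity $S_\lambda(-N)=(-1)^{|\lambda|}S_{\lambda'}(N)$ for hook-content polynomials to rewrite the result as a signed sum, over sub-shapes of $\delta_{n-1}$, of products of hook-content numbers $S_{\sigma^*}(2n-1)$, one is left with a finite sum; after a standard antisymmetrization I would rewrite it as a determinant and evaluate it, the parity of $n$ dictating whether the determinant splits into two blocks of equal size or of sizes differing by one. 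This split is precisely the mechanism producing $\pm(A_{n+1}^V)^2$ for $n$ even and $\pm A_n^V A_{n+2}^V$ for $n$ odd; the overall sign is then matched against Proposition~\ref{prop:propgpi}.

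\textbf{Main obstacle.} In every part the crux is the same: the final closed-form evaluation, turning an explicit Schur / determinantal / contour-integral expression specialized at $t=-n$ into the stated VSASM and ASM products. These are determinant evaluations of Mills--Robbins--Rumsey / Kuperberg type, which are delicate, and the even/odd dichotomy -- together with the vanishing of $\sum_\pi G_\pi$ when $n$ is even, which is forced by $A_{2m}^V=0$ -- must come out of the combinatorics rather than being put in by hand. Two further difficulties: I am not aware of the partial sum rule $\mathcal{S}_n(\tau,t)$ having been recorded in closed form for general $p$, so that step must essentially be done from scratch; and in this approach the identity $\sum_\pi|G_\pi|=A_n$ is only conditional on the sign part of Conjecture~\ref{conj:realroots}, so an unconditional treatment of that piece would have to be coupled with progress on the sign conjecture.
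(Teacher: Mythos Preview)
This statement is a \emph{conjecture} in the paper and is not proved there. What the paper offers, in the remark immediately following the conjecture, is only a partial conditional reduction: using the known identity $A_{()^n}(t)=\sum_{|\pi|=n}A_\pi(t-1)$ (Equation~\eqref{eq:polsumrule}) evaluated at $t=1-n$, together with Conjectures~\ref{conj:realroots} and~\ref{conj:dec}, one obtains $\sum_\pi G_\pi=0$ when $n$ is even and $\sum_\pi G_\pi=G_{()^{n-1}}$ when $n$ is odd. This shows that the second equality in~\eqref{eq:mysterious} follows from the even case of~\eqref{eq:mysterious2}, conditionally on those two earlier conjectures. No argument whatsoever is offered for $\sum_\pi|G_\pi|=A_n$ or for~\eqref{eq:mysterious2} itself.

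Your proposal is therefore not being compared against a proof but against this modest reduction, and it is far more ambitious. Your parity computation $\Psi_\pi(-\tau,t)=(-1)^{d(\pi)}\Psi_\pi(\tau,t)$ is correct (the paper records it in Section~\ref{sub:tauconj}), and your reformulation of the two sums as $\mathcal{S}_n(\pm1,-n)$ is sound; note in particular that~\eqref{eq:polsumrule} is exactly the statement $\mathcal{S}_n(1,t)=A_{()^n}(t+1)$, i.e.\ the $\tau=1$ case of the partial sum rule you seek is already known. But, as you yourself acknowledge in your ``Main obstacle'' paragraph, none of the decisive steps are carried out: you do not produce a closed form for $\mathcal{S}_n(\tau,t)$, you do not perform the specialization and factorization at $t=-n$, and you do not evaluate $A_{()^n}(-n)$. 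Even with the $\tau=1$ sum rule in hand, the paper cannot evaluate $A_{()^n}(1-n)$ or $A_{()^n}(-n)$ in closed form---that is precisely why~\eqref{eq:mysterious2} is stated as a conjecture. So what you have written is a reasonable strategic outline, but it is not a proof, and the gaps you identify are exactly the ones that keep the statement conjectural in the paper.
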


The first equality in~\eqref{eq:mysterious} is particularly interesting: it implies that the unsigned integers $|G_\pi|$, when $\pi$ runs through all matchings of size $n$, sum up to $A_n$, the total number of FPL of size $n$. Of course the $A_\pi$  verify exactly this also, but the properties of $G_\pi$ we have just seen show that the sets of numbers have different behaviors. For instance, the stability property $G_\pi = G_{(\pi)}$ fails for $A_\pi$ obviously, while in general $G_{r(\pi)}\neq G_\pi$. Furthermore, $A_{(()())_{a-2}()_b}=a+b-1$ while $G_{(()())_{a-2}()_b}=(-1)^{ab+1}(a+1)$. 
This raises the problem of finding a partition of FPLs of size $n$ --or any other combinatorial object enumerated by $A_n$-- whose blocks $\{\mathcal{G}_\pi\}_{\pi:|\pi|=n}$ verify $|\mathcal{G}_\pi|=|G_{\pi}|$.
\medskip

\noindent {\em Remark:} In fact, part of the conjecture is a consequence of Conjectures~\ref{conj:realroots} and~\ref{conj:dec}. Indeed, it was proved in~\cite{artic47} that, as polynomials, we have:
\begin{equation}
\label{eq:polsumrule}
 A_{{()^n}}(t)=\sum_{\pi:|\pi|=n}A_\pi(t-1)
\end{equation}
 
 If one evaluates this for $t=1-n$, then two cases occur:
\begin{itemize} 
\item \textit{if $n$ is even}, then we have that $1-n$ is a root of $A_{()^n}(t)$ by Conjecture~\ref{conj:realroots}, and we get from~\eqref{eq:polsumrule} that
\[
 \sum_{\pi : |\pi|=n} G_\pi =0;
\]
\item \textit{if $n$ is odd}, then we are in the conditions of Conjecture~\ref{conj:dec}, which tells us that $A_{()^n}(1-n)=G_{()^{n-1}}A_{()}=G_{()^{n-1}}$, and from~\eqref{eq:polsumrule} we have
\[
 \sum_{\pi : |\pi|=n} G_\pi =G_{()^{n-1}}.
\]

\end{itemize}
 
This then proves that the second equality in~\eqref{eq:mysterious} can be deduced from the first case in~\eqref{eq:mysterious2}.

\subsubsection{Positivity of the coefficients}

Our last conjecture is a bit different from the other three ones, in that it does not deal with values of the polynomials, but their coefficients:
\begin{conj}
\label{conj:posX}
 For any $\pi$, the coefficients of $A_\pi(t)$ are nonnegative.
\end{conj}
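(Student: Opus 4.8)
The plan is to argue from the ``FPL expansion'' \eqref{eq:apiX}, which (with $n=|\pi|$) reads
\[
A_\pi(t)=\sum_{\si\le\pi}a_\si^\pi\,S_\si(t-n+1),\qquad
S_\si(t-n+1)=\frac1{H_\si}\prod_{u\in Y(\si)}\bigl(t-n+1+c(u)\bigr),
\]
with all $a_\si^\pi\in\mathbb Z_{\ge0}$. The essential obstacle is that the summands themselves do \emph{not} have nonnegative coefficients: since $Y(\si)\subseteq Y(\pi)$ lies in the $(n-1)$-staircase, every content satisfies $c(u)\le n-2$, so each factor is $t-\rho$ with $\rho=n-1-c(u)\ge1$, whence $S_\si(t-n+1)$ has coefficients of strictly alternating sign. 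Conjecture~\ref{conj:posX} is therefore a pure cancellation statement; in particular it is strictly stronger than Conjecture~\ref{conj:realroots} together with the trivial positivity of the values $A_\pi(p)$ ($p\ge0$), as the polynomial $(t+1)(t^2-3t+4)=t^3-2t^2+t+4$ shows --- its only real root is $-1$, it is positive at every nonnegative integer, yet it has a negative coefficient.

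The first concrete attempt I would make is an induction on $j$ proving positivity of the coefficient of $t^{d(\pi)-j}$ in $A_\pi(t)$, generalising the argument behind Theorem~\ref{th:subleading}; together with Theorem~\ref{zuber} this already handles $j\le1$, and $j=d(\pi)$ is the trivially positive constant term $A_\pi(0)=A_\pi$. Write $e_k(\si)$ for the $k$-th elementary symmetric function of the positive integers $\{\,n-1-c(u):u\in Y(\si)\,\}$. Extracting coefficients in the display above, the coefficient of $t^{d(\pi)-j}$ equals
\[
\sum_{\substack{\si\le\pi\\ d(\si)\ge d(\pi)-j}}\frac{(-1)^{k_\si}\,a_\si^\pi}{H_\si}\,e_{k_\si}(\si),
\qquad k_\si:=j-\bigl(d(\pi)-d(\si)\bigr).
\]
The negative contributions are exactly those with $k_\si$ odd --- foremost $\si=\pi$, which contributes $(-1)^je_j(\pi)/H_\pi$ --- while the minimal shapes $\si$ with $d(\si)=d(\pi)-j$ contribute the manifestly nonnegative $a_\si^\pi/H_\si$. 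What is then needed is a lower bound on the positive part strong enough to beat these explicit negative terms, i.e.\ a $j$-fold analogue of the hook-length/content identities of Proposition~\ref{prop:newhookformulas} together with enough control on the numbers $a_\si^\pi$ for $\si$ near $\pi$ in the order. I expect this to be the main difficulty: the $a_\si^\pi$ (the Thapper numbers) have no closed form, and the cancellations are delicate, so such a coefficient-by-coefficient induction seems to require a genuinely new combinatorial identity at every level.

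For this reason I would in parallel pursue two softer routes. The first is to seek an honestly positive combinatorial model: a family $\mathcal D_\pi$ of ``decorated'' objects --- presumably a refinement of the configurations enumerated in \cite{CKLN} --- carrying a nonnegative statistic $\mathrm{st}$ with
\[
A_{(\pi)_p}=\#\bigl\{(D,f)\ :\ D\in\mathcal D_\pi,\ f\colon\{1,\dots,\mathrm{st}(D)\}\to\{1,\dots,p\}\bigr\},
\]
which would force $A_\pi(t)=\sum_{D\in\mathcal D_\pi}t^{\mathrm{st}(D)}$ and prove the conjecture outright; the catch is that the bijective proof of polynomiality in \cite{CKLN} naturally produces the polynomial in a \emph{binomial} basis rather than the monomial one, so a new grouping of the configurations is required. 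The second route is to attack instead the stronger bivariate statement of Section~\ref{sec:tau}, that $\Psi_\pi(\tau,t)\in\mathbb Z_{\ge0}[\tau,t]$, via the contour-integral formula \eqref{eq:psitauphitau}; here the obstruction is the Vandermonde factor $\prod_{j>i}(u_j-u_i)$ in $\Phi_a(\tau,t)$, which visibly introduces signs, so one would first have to rewrite the integral --- or the action of the matrix $\C(\tau)$ --- in a manifestly positivity-preserving form.

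Finally, independently of the general case, the conjecture can be confirmed for larger families of shapes by direct computation. Rectangles \cite{artic27,artic38} and rectangle-plus-a-row shapes \cite{CasKrat} are already known; I would next do the hook shapes and the two-row shapes, for which \eqref{eq:apiX} involves few enough $\si$ that the alternating sum above can be resummed explicitly. Combined with an extension of Theorem~\ref{th:subleading} to the coefficient of $t^{d(\pi)-2}$, this would at least establish positivity at both ends of every $A_\pi(t)$.
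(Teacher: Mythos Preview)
There is no proof in the paper to compare your proposal against: the statement is Conjecture~\ref{conj:posX}, and the paper leaves it open. The only case actually established is the subleading coefficient (Theorem~\ref{th:subleading}), which you already cite as the base of your proposed induction.

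Your write-up is accordingly not a proof either, but a research plan. You correctly identify the central obstruction in the expansion~\eqref{eq:apiX}: each $S_\si(t-n+1)$ has strictly alternating coefficients, so positivity of $A_\pi(t)$ is a genuine cancellation statement, and your example $(t+1)(t^2-3t+4)$ rightly shows it does not follow from Conjecture~\ref{conj:realroots} together with positivity of the values at nonnegative integers. The coefficient-by-coefficient induction you propose is the natural continuation of the argument for Theorem~\ref{th:subleading}, but as you yourself note, it would require at each level a new identity of the type of Proposition~\ref{prop:newhookformulas} together with control on the numbers $a_\si^\pi$ for $\si$ close to $\pi$; the paper supplies neither, and in its concluding section explicitly points to the $a_\si^\pi$ as objects whose structure is not yet understood. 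The alternative routes you sketch --- a manifestly positive combinatorial model for $A_{(\pi)_p}$, or a sign-free rewriting on the $\tau$-side via~\eqref{eq:psitauphitau} --- are sensible directions, but they are likewise not carried out (or resolved) in the paper.

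In short: your analysis of the difficulties is accurate and well aligned with what the paper says about this conjecture, but you have not supplied a proof, and neither does the paper.
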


It seems in fact to be true that the polynomials $Q_\pi(t)$  --whose existence is predicted by Conjecture~\ref{conj:realroots}-- also only have nonnegative coefficients.
\medskip

By Theorem~\ref{zuber}, we know already that $A_\pi(t)$ is of degree $d(\pi)$ with a positive leading coefficient, so we will be interested in the {\em subleading} coefficient, that is, the coefficient of $t^{d(\pi)-1}$. We managed to compute this coefficient and prove that it is indeed positive: this is Theorem~\ref{th:subleading} in the next section. 


\section{The subleading term of the polynomials}
\label{sec:subleading}

In this section we will prove the following result:

\begin{thm}
\label{th:subleading}
 Given a matching $\pi$ of size $n$, $\pi\neq ()_n$, the coefficient of $t^{d(\pi)-1}$ in $A_\pi(t)$ is positive.
\end{thm}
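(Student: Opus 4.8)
The plan is to use the expansion~\eqref{eq:apiX} and extract the coefficient of $t^{d(\pi)-1}$ explicitly. Write $d=d(\pi)$. Since $S_\si(t-n+1)=\frac{1}{H_\si}\prod_{u\in Y(\si)}(t-n+1+c(u))$ has degree $d(\si)$, only the terms with $d(\si)\in\{d,d-1\}$ in~\eqref{eq:apiX} contribute to the coefficient of $t^{d-1}$. There is exactly one $\si$ with $d(\si)=d$, namely $\si=\pi$ itself, and its coefficient $a_\pi^\pi$ equals $1$; among the $\si<\pi$, the ones with $d(\si)=d-1$ are precisely those whose Young diagram is obtained from $Y(\pi)$ by removing a single corner box. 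So I would first record that the coefficient of $t^{d-1}$ in $A_\pi(t)$ equals
\[
\frac{1}{H_\pi}\Bigl(-(n-1)d+\sum_{u\in Y(\pi)}c(u)\Bigr)\;+\;\sum_{\si:\,d(\si)=d-1,\ \si<\pi}\frac{a_\si^\pi}{H_\si},
\]
using $\sum_{u\in Y(\si)}(t-n+1+c(u))$-expansion for the $d(\si)=d$ term. Since all $a_\si^\pi\ge 0$ and all $H_\si>0$, the second sum is nonnegative, so it suffices to prove that the first term is positive, i.e. that
\[
\sum_{u\in Y(\pi)}c(u)\;\ge\;(n-1)\,d(\pi),
\]
with the further care that when the first term is zero the second sum must be strictly positive (which happens because $\pi\neq()_n$ forces $Y(\pi)\neq\varnothing$, hence there is at least one $\si<\pi$ with $d(\si)=d-1$ and $a_\si^\pi\ge 1$).

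The heart of the matter is therefore the content inequality $\sum_{u\in Y(\pi)}c(u)\ge (n-1)d(\pi)$ for Young diagrams $Y(\pi)$ satisfying the staircase constraint (the $i$th row has at most $n-i$ boxes). I would prove this by a direct combinatorial/inductive argument on the boxes: group the boxes of $Y(\pi)$ by rows, so that row $x$ contributes $\sum_{y=1}^{\lambda_x}(y-x)$ where $\lambda_x\le n-x$ is the length of row $x$; the constraint $\lambda_x\le n-x$ is exactly what is needed to bound the negative contributions $-x$ from below. An equivalent and perhaps cleaner route is to use the fact that for the \emph{maximal} diagram (the staircase $\delta_{n-1}=(n-1,n-2,\dots,1)$) equality is essentially forced, and that removing a box from $Y(\pi)$ in the region where $c(u)<n-1$ only helps — so I would check that every corner box $u$ of a diagram satisfying the constraint that can legitimately be removed has content $c(u)\le n-1$, giving monotonicity of $\sum c(u)-(n-1)d$ under box removal and reducing to the empty diagram or to verifying the base cases by hand. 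This inequality, together with the explicit coefficient formula, also yields the hook-length/content identities promised as Proposition~\ref{prop:newhookformulas}.

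The main obstacle is establishing the content inequality cleanly while correctly tracking the strictness: I need not just $\sum c(u)\ge (n-1)d$ but a precise understanding of \emph{when} equality holds, and in those (few) cases I must fall back on the positivity of $\sum_{\si}a_\si^\pi/H_\si$, which in turn requires knowing that at least one of the relevant $a_\si^\pi$ is nonzero — this uses the combinatorial meaning of the $a_\si^\pi=a(\si,\pi,\mathbf 0_n)$ from~\cite{Thapper,NadFPL1}, specifically that $a_\pi^\pi=1$ and more generally that the "obvious" single-box-removal terms occur with positive coefficient. A secondary technical point is justifying that no $\si<\pi$ with $d(\si)=d-1$ other than single-corner-removals of $Y(\pi)$ can appear — but this is immediate since $\si\le\pi$ means $Y(\si)\subseteq Y(\pi)$, and $|Y(\si)|=|Y(\pi)|-1$ with containment forces $Y(\si)$ to be $Y(\pi)$ minus one corner box. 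I expect the write-up to spend most of its length on the content inequality and its equality analysis, with everything else being bookkeeping.
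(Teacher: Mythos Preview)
Your proposal has a genuine gap: the content inequality you set out to prove,
\[
\sum_{u\in Y(\pi)}c(u)\;\ge\;(n-1)\,d(\pi),
\]
is simply false. Every box $u=(x,y)$ of $Y(\pi)$ has $x\ge 1$ and $y\le n-1$, so $c(u)=y-x\le n-2$; summing gives $\sum_u c(u)\le (n-2)\,d(\pi)<(n-1)\,d(\pi)$. In fact for any self-conjugate shape (e.g.\ a single box, or $Y(\pi)=(2,1)$) the content sum is zero, while $(n-1)d(\pi)>0$. So the ``first term'' in your decomposition is \emph{always negative}, and positivity must come entirely from the corner sum $\sum_{\si}a_\si^\pi/H_\si$. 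Knowing only that these $a_\si^\pi$ are nonnegative integers is not enough to beat the negative term.

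What the paper actually does is use the exact value $a_{\pi-(x,y)}^\pi=2n-1-y$ for corner removals (from~\cite{NadFPL1}). Writing $2n-1-y=(n-1)+(n-y)$ and invoking the hook-length formula $\frac{d(\pi)}{H_\pi}=\sum_{(x,y)\in Cor(\pi)}\frac{1}{H_{\pi-(x,y)}}$ kills the $(1-n)d(\pi)/H_\pi$ piece exactly, leaving
\[
[t^{d-1}]A_\pi(t)=\frac{1}{H_\pi}\sum_{u}c(u)+\sum_{(x,y)\in Cor(\pi)}\frac{n-y}{H_{\pi-(x,y)}}.
\]
The content sum here is still sign-indefinite; the paper removes it by using the symmetry $A_\pi(t)=A_{\pi^*}(t)$ (which flips the sign of all contents) and averaging the two expressions. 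Your outline misses both of these ingredients---the precise value of the corner coefficients and the conjugation trick---and without them the argument cannot close.
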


This is a special case of Conjecture~\ref{conj:posX}. We will give two proofs of this theorem, one starting from the expression~\eqref{eq:apiX}, the other based on the expression~\eqref{eq:psipiX}. As a byproduct of these proofs, we will deduce two formulas concerning products of hook lengths (Proposition~\ref{prop:newhookformulas}).

\subsection{First proof}

We use first the expression of $A_\pi(t)$ given by the sum in Equation~\eqref{eq:apiX}:
\[
 A_\pi(t)=\sum_{\si\leq \pi}a_{\si}^\pi\cdot S_\si(t+1-n)
\]

 We need to gather the terms contributing to the coefficient of $t^{d(\pi)-1}$: they are of two kinds, depending on whether $S_\si(t+1-n)$ has degree $d(\si)$ equal to $d(\pi)$ or $d(\pi)-1$. Since $\si\leq\pi$, the first case occurs only for $\si=\pi$, while the second case occurs when $Y(\si)$ is obtained from the diagram $Y(\pi)$ by removing a {\em corner} from this diagram, i.e. a box of $Y(\pi)$ which has no box below it and no box to its right. We denote by $Cor(\pi)$ the set of corners of $Y(\pi)$, and we get:
\[
 [t^{d(\pi)-1}]A_\pi(t)=\frac{a_{\pi}^{\pi}}{H_\pi}\sum_{u\in Y(\pi)}(1-n+c(u)) + 
\sum_{(x,y)\in Cor(\pi)}  \frac{a_{\pi-(x,y)}^{\pi}}{H_{\pi-(x,y)}}.
\]
 It is proved in~\cite{CKLN} that $a_{\pi}^{\pi}=1$, and in~\cite{NadFPL1} that $a_{\pi-(x,y)}^{\pi}=2n-1-y$ when $(x,y)$ belongs to $Cor(\pi)$. We can then rewrite the previous expression as follows:
\[
 \frac{d(\pi)(1-n)}{H_\pi} +\frac{1}{H_\pi}\sum_{u\in Y(\pi)}c(u)+  \sum_{(x,y)\in Cor(\pi)}\frac{(n-1)}{H_{\pi-(x,y)}}+ 
\sum_{(x,y)\in Cor(\pi)} \frac{(n-y)}{H_{\pi-(x,y)}}.
\]
Now the first and third terms cancel each other because of the {\em hook length formula} (see~\cite{StanleyEnum2} for instance), which is equivalent to 
\[
 \frac{d(\pi)}{H_\pi}=\sum_{(x,y)\in Cor(\pi)}\frac{1}{H_{\pi-(x,y)}}. 
\]

Therefore we are left with

\begin{equation}
\label{eq:FPL_sdt}
 [t^{d(\pi)-1}]A_\pi(t)=\frac{1}{H_\pi}\sum_{u\in Y(\pi)}c(u)+\sum_{(x,y)\in Cor(\pi)} \frac{(n-y)}{H_{\pi-(x,y)}}.
\end{equation}

We now wish to prove that this is positive, which is not clear since the first term can be negative. The idea is to remember that $A_\pi(t)=A_{\pi^*}(t)$ by Proposition~\ref{prop:polynomials}. Now when  $\pi\mapsto\pi^*$, the box $(x,y)$ is sent to $(y,x)$, all contents change signs,  $Cor(\pi)$ is sent to $Cor(\pi^*)$, and hook lengths are preserved. From these observations we get the alternative  expression:
\begin{equation}
\label{eq:FPL_sdt2}
 [t^{d(\pi)-1}]A_\pi(t)=-\frac{1}{H_\pi}\sum_{u\in Y(\pi)}c(u)+\sum_{(x,y)\in Cor(\pi)} \frac{(n-x)}{H_{\pi-(x,y)}}.
\end{equation}

 Clearly in both \eqref{eq:FPL_sdt} and \eqref{eq:FPL_sdt2} the second term is positive, since $y<n$ for all boxes $(x,y)$ in $Y(\pi)$ (there is at least one such box because $\pi \neq ()_n$). Adding~\eqref{eq:FPL_sdt} and~\eqref{eq:FPL_sdt2}, and dividing by $2$, we obtain that the coefficient $[t^{d(\pi)-1}]A_\pi(t)$ is positive:
\begin{equation}
\label{eq:coeff_pos_expression}
 [t^{d(\pi)-1}]A_\pi(t)=\sum_{(x,y)\in Cor(\pi)} \frac{(2n-x-y)}{H_{\pi-(x,y)}}.
\end{equation}

\subsection{Second proof}
\label{sub:subleadingO1}
Here we use the results of Section~\ref{sec:pol_qKZ}, with $\tau=1$. Equation~\eqref{eq:psipiX} says that
\[
\Phi_a(t) = A_\pi(t) + \sum_{\sigma<\pi} C_{\pi,\sigma} A_\sigma (t),
\]
where $a=a(\pi)$. By Theorem~\ref{zuber}, we know that $A_\pi(t)$ has degree $d(\pi)$. Furthermore, since $C_{\pi,\sigma}$ has degree $\leq d(\pi)-d(\sigma) -2$ if $\sigma<\pi$, we conclude that the coefficient of $t^{d(\pi)-1}$ in $A_\pi(t)$ and $\Phi_{a(\pi)}(t)$ is the same, so:
\[
 [t^{d(\pi)-1}]A_{\pi}(t)= [t^{d(\pi)-1}]\oint\ldots\oint \prod_{i=1}^{|a|} \frac{du_i}{2 \pi i u_i^{a_i}}
 (1+u_i)^t \prod_{j>i} (u_j-u_i) (1+u_j+u_i u_j).
\]

If we consider $(1+u_j+u_i u_j)=(1+u_j)+u_i u_j$, we notice that each time we pick the term $u_i u_j$, we decrease $a_i$ and $a_j$ by $1$ and thus the integral corresponds formally to a diagram with two boxes less, so the degree in $t$ decreases by $2$ also; these terms can thus be ignored, which gives:
\begin{align*}
 [t^{d(\pi)-1}]A_{\pi}(t)=& [t^{d(\pi)-1}]\oint \ldots \oint \prod_i \frac{du_i}{2 \pi i u_i^{a_i}} (1+u_i)^{t+i-1} \prod_{j>i} (u_j-u_i)\\
=&[t^{d(\pi)-1}]\sum_{\sigma \in S_{|\pi|}} (-1)^\sigma \oint \ldots \oint \prod_i \frac{du_i}{2 \pi i a_i+1-\sigma_i} (1+u_i)^{t+i-1}\\
=&[t^{d(\pi)-1}]\sum_\sigma (-1)^{\sigma} \prod_i \binom{t+i-1}{a_i-\sigma_i}\\
=&[t^{d(\pi)-1}]\det \left| \binom{t+i-1}{a_i-j} \right|.
\end{align*}

Expanding the binomial up to the second order, we get:
\[
 \binom{t+i-1}{a_i-j} = t^{a_i-j}\frac{1+\frac{(a_i-j)(2i+j-a_i-1)}{t}}{(a_i-j)!}+\text{terms of lower degree}.
\]

If we compute the subleading term of the determinant we get:
\begin{align}
\label{eq:expdet}
 [t^{d(\pi)-1}]A_{\pi}(t)&=[t^{-1}]\det\left| \frac{1+\frac{(a_i-j)(2i+j-a_i-1)}{t}}{(a_i-j)!} \right|\notag\\
 & = \sum_{k=0}^{n-1} \det\left| \frac{1}{(a_i-j)!} \times
  \begin{cases}
    1& \textrm{if }i\neq k\\
    (a_i-j)(2i+j-a_i-1)/2& \textrm{if }i= k
  \end{cases}
 \right|.
\end{align}

We want to show that this expression is equal to the r.h.s. of~\eqref{eq:FPL_sdt2}. First of all, we need to express the quantities involving hooks and contents in terms of the sequence $a$. Notice that the integer $a_i$ is naturally associated to the $(n+1-i)th$ row from the top in $Y(a)$, the length of this row being given by $(a_i-i)$. 

\begin{itemize}
\item It is well known (see for instance~\cite[p.132]{Sagan_symmgroup}) that 
\begin{equation}
\label{eq:hookdet}
  \frac{1}{H_{Y(a)}} = \det \left| \frac{1}{(a_i-j)!} \right|;
\end{equation}
\item The contents in the row indexed by $a_i$ are given by $i-n,i-n+1,\ldots,i-n+(a_i-i-1)$, which sum up to $\frac{1}{2} (a_i-i)(2n-a_i-i+1)$, and therefore we get
\[
 \sum_{u \in Y(a)} c(u) = \sum_{i=1}^n\frac{1}{2} (a_i-i)(2n-a_i-i+1);
\]
  \item Noticing that $a_i\mapsto a_i-1$ removes a box in $(n+1-i)th$ row, we have:
\begin{equation}
\label{eq:pouet}
 \sum_{(x,y)\in Cor(\pi)} \frac{n-x}{H_{\pi-(x,y)}} = \sum_{k=1}^n \det \left| \frac{1}{(a_i-j)!} 
    \begin{cases}
      1 & \textrm{if } i \neq k \\
      (a_i-j)(i-1) & \textrm{if } i=k
    \end{cases} \right|.
\end{equation}
Here we can sum over all $k$, i.e. all rows, because the determinants corresponding to rows without a corner in $Y(a)$ have two equal rows and thus vanish.
\end{itemize}

Looking back at Equation~\eqref{eq:expdet}, we write  
\[(a_i-j)(2i+j-a_i-1)/2=-(a_i-j)(a_i-j-1)/2+(a_i-j)(i-1),\]
and splitting each determinant in two thanks to  linearity in the $k$th row. Then the expression obtained by summing the determinants  correponding to the second term is precisely~\eqref{eq:pouet}; therefore all that remains to prove is the following lemma:
 
\begin{lemma}
\label{lem:endproof}
\begin{multline}\label{eq:o1_det}
\sum_{k=1}^{n} \det\left| \frac{1}{(a_i-j)!} \times
  \begin{cases}
    1& \textrm{if }i\neq k\\
    (a_i-j)(a_i-j-1)& \textrm{if }i= k
  \end{cases}\right|\\
 = \left(\sum_{k=1}^{n}(a_k-k)(a_k-2n+k-1)\right)\times \det \left| \frac{1}{(a_i-j)!} \right|
\end{multline}
\end{lemma}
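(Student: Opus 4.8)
The plan is to evaluate every determinant appearing on the left of~\eqref{eq:o1_det} in closed form, thereby reducing the identity to an elementary symmetric-function computation.

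First I would observe that multiplying the $(k,j)$-entry $\tfrac1{(a_k-j)!}$ of the $k$-th matrix by $(a_k-j)(a_k-j-1)$ simply produces $\tfrac1{(a_k-j-2)!}$ (with the convention $1/m!=0$ for $m<0$). So, writing $M_b:=\bigl(\tfrac1{(b_i-j)!}\bigr)_{1\le i,j\le n}$ for an integer vector $b$, the $k$-th summand on the left is exactly $\det M_{a-2e_k}$, where $e_k$ is the $k$-th unit vector. Next I would invoke the well-known evaluation $\det M_b=\prod_{i<j}(b_j-b_i)\big/\prod_i(b_i-1)!$ (a refinement of~\eqref{eq:hookdet}; it follows by pulling $1/(b_i-1)!$ out of row $i$ and recognizing a Vandermonde in the falling factorials $(b_i-1)^{\underline{j-1}}$), still with $1/m!=0$ for $m<0$. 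Comparing $b=a-2e_k$ with $b=a$, only the factors with index $k$ change, and using $a_j-a_k+2=-(a_k-a_j-2)$ to recombine them gives
\[
 \det M_{a-2e_k}=(a_k-1)(a_k-2)\,\frac{\prod_{i\ne k}(a_k-a_i-2)}{\prod_{i\ne k}(a_k-a_i)}\,\det M_a,
\]
both sides vanishing when $a_k\in\{1,2\}$. Dividing~\eqref{eq:o1_det} through by $\det M_a$, the lemma becomes the rational identity $\sum_k (a_k-1)(a_k-2)\tfrac{\prod_{i\ne k}(a_k-a_i-2)}{\prod_{i\ne k}(a_k-a_i)}=\sum_k(a_k-k)(a_k-2n+k-1)$.

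To prove that, I would set $V(x)=\prod_i(x-a_i)$, so that $V'(a_k)=\prod_{i\ne k}(a_k-a_i)$ and $\prod_{i\ne k}(a_k-a_i-2)=-\tfrac12 V(a_k-2)$ (the omitted factor being $-2$). The left side is then $-\tfrac12\sum_k F(a_k)/V'(a_k)$ with $F(x)=(x-1)(x-2)V(x-2)$, a polynomial of degree $n+2$. Now I would use the Lagrange/divided-difference identity that $\sum_k a_k^m/V'(a_k)$ equals the complete homogeneous symmetric polynomial $h_{m-n+1}(a_1,\dots,a_n)$, i.e.\ that $\sum_k F(a_k)/V'(a_k)$ is the coefficient of $x^{-1}$ in the expansion of $F(x)/V(x)$ at $x=\infty$; only the top four coefficients of $F$ contribute. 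Writing $F(x)/V(x)=(x^2-3x+2)\prod_i\bigl(1-\tfrac2{x-a_i}\bigr)$ and expanding the product to order $x^{-3}$ in terms of $p_r=\sum_i a_i^r$, namely $\prod_i(1-\tfrac2{x-a_i})=1-\tfrac{2n}{x}+\tfrac{2n(n-1)-2p_1}{x^2}+\tfrac{-8\binom n3+4(n-1)p_1-2p_2}{x^3}+O(x^{-4})$, a short computation gives $-\tfrac12\sum_k F(a_k)/V'(a_k)=p_2-(2n+1)p_1+\tfrac{n(n+1)(2n+1)}{3}$. Finally I would expand $(a_k-k)(a_k-2n+k-1)=a_k^2-(2n+1)a_k+(2n+1)k-k^2$, sum over $k$, and use $\sum_k k=\binom{n+1}2$ and $\sum_k k^2=\tfrac{n(n+1)(2n+1)}{6}$ to obtain the same expression, which finishes the proof.

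I do not expect a genuine obstacle here: the content is all bookkeeping. The two delicate points are (i) being careful with the convention $1/m!=0$ for $m<0$ when applying the closed form to $b=a-2e_k$, so that a row with nonpositive shifted content is correctly zero (in accordance with the vanishing of the prefactor $(a_k-1)(a_k-2)$), and (ii) the final numerical match, which ultimately rests on the elementary identity $\tfrac{2n(n-1)(n-2)}{3}+3n^2-n=\tfrac{n(n+1)(2n+1)}{3}$. As a consistency check on the method, running the same argument with $a-e_k$ in place of $a-2e_k$ reproduces $\sum_k\det M_{a-e_k}=d(\pi)\,\det M_a$, which is exactly the hook-length formula invoked earlier in this section.
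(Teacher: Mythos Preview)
Your proof is correct and complete; the bookkeeping checks out, including the delicate edge cases $a_k\in\{1,2\}$ and the final numerical match $\tfrac{2n(n-1)(n-2)}{3}+3n^2-n=\tfrac{n(n+1)(2n+1)}{3}$.

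The route you take is genuinely different from the paper's. The paper never evaluates any determinant in closed form: instead it rewrites the right-hand side using $(a_k-k)(a_k-2n+k-1)=a_k(a_k-2n-1)+k(2n-k+1)$ as a sum of row-modified plus column-modified determinants, invokes the row/column symmetry
\[
\sum_{k}\det\Bigl|a_{ij}\begin{cases}1&i\ne k\\ b_{ij}&i=k\end{cases}\Bigr|
=\sum_{k}\det\Bigl|a_{ij}\begin{cases}1&j\ne k\\ b_{ij}&j=k\end{cases}\Bigr|
\]
(both equal $[t^{-1}]\det|a_{ij}(1+b_{ij}/t)|$), and reduces the difference of the two sides to a sum of determinants each having two proportional columns (or a zero column), hence vanishing. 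Your approach, by contrast, kills the determinants at the outset via the Vandermonde-type evaluation $\det M_b=\prod_{i<j}(b_j-b_i)/\prod_i(b_i-1)!$, turning the lemma into a purely symmetric-function identity that you then dispatch by residues/Lagrange interpolation. The paper's argument is slicker and stays conceptual, but it relies on spotting the right splitting and the row/column duality trick; your argument is more computational but entirely mechanical once the closed form is in hand, and it has the pleasant side effect that the same machinery, applied to $a-e_k$ instead of $a-2e_k$, re-derives the hook-length formula used earlier in the section.
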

 
\begin{proof}
We write $(a_k-k)(a_k-2n+k-1)=a_k(a_k-2n-1)+k(2n-k+1)$ and use linearity of the determinant with respect to line (and column) $k$ to write the r.h.s. of~\eqref{eq:o1_det} as
\begin{multline} \label{eq:cu_det}
 \sum_{k=1}^n \det \left|\frac{1}{(a_i-j)!} 
  \begin{cases} 
   1 & \textrm{if } i \neq k\\
   a_i(a_i-2n-1) & \textrm{if } i=k
  \end{cases}  \right|\\
 +
 \sum_{k=1}^n \det \left|\frac{1}{(a_i-j)!} 
  \begin{cases} 
   1 & \textrm{if } j \neq k\\
   j(2n-j+1) & \textrm{if } j=k
  \end{cases}  \right|.
\end{multline}

Now we notice that we have the general identity for any variables $a_{ij},b_{ij}$:
\[
\sum_{k=1}^n \det \left|a_{ij}
  \begin{cases}
   1 & \textrm{if } i \neq k\\
   b_{ij} & \textrm{if } i=k
  \end{cases}  \right|
=\sum_{k=1}^n \det \left|a_{ij}
  \begin{cases}
   1 & \textrm{if } j \neq k\\
   b_{ij} & \textrm{if } j=k
  \end{cases}  \right|.
\]
Indeed, both correspond to the coefficient of $t^{-1}$ in $\det\left|a_{ij}+a_{ij}b_{ij}/t\right|$, which can be expanded using multilinearity according either to rows or to columns. We use this in the first term of~\eqref{eq:cu_det} and in the l.h.s. in the lemma; putting things together, the r.h.s. of~\eqref{eq:o1_det} minus the l.h.s is equal to:

\[
 \sum_{k=1}^n \det \left|\frac{1}{(a_i-j)!} 
  \begin{cases} 
   1 & \textrm{if } j \neq k\\
   2(n-j)(a_i-j) & \textrm{if } j=k
  \end{cases}  \right|.
\]

For all $k<n$ the determinants have two proportional columns ($k$ and $k+1$), while for $k=n$ the $n$th column of the determinant is zero. So all these determinants are zero and therefore so is their sum, which achieves the proof of the lemma. \end{proof}

This completes the second proof of Theorem~\ref{th:subleading}.

\subsection{Application to hook length products}

It turns out that some of the computations made to prove Theorem~\ref{th:subleading} have nice applications to certain {\em hook identities}. If $Y$ is a Young diagram, let $Cor(Y)$ be its corners, and $HD(Y)$ (respectively $VD(Y)$) be the horizontal (resp. vertical) dominos which can be removed from $Y$, defined as two boxes which can be removed in the same row (resp. the same column). Then we have the following identities:
\begin{prop}
\label{prop:newhookformulas}
 For any Young diagram $Y$ we have:
 \[
 \frac{2\sum_{u\in Y}c(u)}{H_Y}=\sum_{(x,y)\in Cor(Y)} \frac{(y-x)}{H_{Y-(x,y)}}
\]
and
\[
 \frac{2\sum_{u\in Y}c(u) }{H_Y}= \sum_{hd\in HD(Y)} \frac{1}{H_{(Y-hd)}}-\sum_{vd\in VD(Y)} \frac{1}{H_{(Y-vd)}}.
\]
\end{prop}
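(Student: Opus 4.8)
The plan is to extract both identities from the determinantal computations already carried out in the second proof of Theorem~\ref{th:subleading}, reading them backwards rather than forwards. The key point is that equation~\eqref{eq:hookdet}, the Jacobi–Trudi-type expansion $1/H_{Y(a)}=\det\left|1/(a_i-j)!\right|$, together with the multilinearity trick used in Lemma~\ref{lem:endproof}, already packages the quantity $\sum_{u\in Y}c(u)/H_Y$ as a sum of determinants obtained by replacing one row of $\det\left|1/(a_i-j)!\right|$ by a suitably weighted row. So the first step is to observe that for the first formula I do not even need the $\tau$-deformation: I can work purely with $a=a(\pi)$ and the explicit content sum $\sum_{u\in Y(a)}c(u)=\sum_i \tfrac12(a_i-i)(2n-a_i-i+1)$ from the itemized list in Section~\ref{sub:subleadingO1}. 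Writing $(a_i-i)(2n-a_i-i+1)=(a_i-i)(n-i)+(a_i-i)(n-a_i+1)$ and expanding $\det\left|1/(a_i-j)!\right|$ multilinearly in each row (as in~\eqref{eq:pouet}) identifies $\sum_u c(u)/H_Y$ with a signed combination of the $1/H_{Y-(x,y)}$ over corners — one contribution giving the $\sum (n-x)/H_{Y-(x,y)}$ term and the mirror contribution (using $Y\mapsto Y^t$, under which contents flip sign and hooks are preserved, exactly the symmetry exploited between~\eqref{eq:FPL_sdt} and~\eqref{eq:FPL_sdt2}) giving $\sum (n-y)/H_{Y-(x,y)}$. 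Subtracting the two and dividing by two leaves $2\sum_u c(u)/H_Y=\sum_{(x,y)\in Cor(Y)}(y-x)/H_{Y-(x,y)}$, which is the first identity. Concretely this is just~\eqref{eq:FPL_sdt} minus~\eqref{eq:FPL_sdt2}: their sum gave~\eqref{eq:coeff_pos_expression}, and their difference gives the first formula directly.

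For the second identity the plan is to relate domino removals to the same determinants. Removing a horizontal domino from row $n+1-i$ of $Y(a)$ corresponds to $a_i\mapsto a_i-2$, i.e. to the column operation sending $j\mapsto j$ but evaluating $1/(a_i-2-j)! = \binom{?}{?}$-type shifts; more usefully, $\det\left|1/(a_i-j)!\right|$ with one row replaced by $(a_i-j)(a_i-j-1)/(a_i-j)!=1/(a_i-j-2)!$ is, up to the vanishing of determinants for rows without a removable horizontal domino (two equal rows, exactly as noted after~\eqref{eq:pouet}), equal to $\sum_{hd\in HD(Y)}1/H_{Y-hd}$. Similarly, replacing a *column* $j$ by the weight $j(j-1)$ (equivalently using the transpose and the column-version of the multilinearity identity proved inside Lemma~\ref{lem:endproof}) produces $\sum_{vd\in VD(Y)}1/H_{Y-vd}$. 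But the left-hand side of~\eqref{eq:o1_det} is precisely $\sum_k \det\left|\tfrac{1}{(a_i-j)!}\times\{(a_i-j)(a_i-j-1)\text{ in row }k\}\right|$, and the lemma evaluates it as $\left(\sum_k(a_k-k)(a_k-2n+k-1)\right)/H_Y = -2\sum_u c(u)/H_Y$ (using again the content-sum formula). So $\sum_{hd}1/H_{Y-hd}+ (\text{sign})\sum_{vd}1/H_{Y-vd}$ collapses to a multiple of $\sum_u c(u)/H_Y$; tracking the sign carefully — the vertical-domino sum comes in with the opposite sign because on the transpose side contents change sign — yields $2\sum_u c(u)/H_Y=\sum_{hd}1/H_{Y-hd}-\sum_{vd}1/H_{Y-vd}$.

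I expect the main obstacle to be purely bookkeeping: keeping the normalizations straight between "remove a corner box" (first identity) and "remove a domino" (second identity), and in particular getting the factor of $2$ and the relative sign between the $HD$ and $VD$ sums right. The cleanest route is probably to state one master identity — for any $n\times n$ matrix entries $a_{ij}$ and weights, $\sum_k \det(a_{ij}+a_{ij}b_{ij}^{(k)}/t)$ read off at $[t^{-1}]$ can be expanded by rows or by columns and the two expansions agree — and then specialize $a_{ij}=1/(a_i-j)!$ with $b_{ij}=(a_i-j)(a_i-j-1)$ for the row expansion and the transposed choice for the column expansion. Matching the row expansion to $\sum_{hd}1/H_{Y-hd}$ and the column expansion to $\sum_{vd}1/H_{Y-vd}$, while also matching the scalar to $-2\sum_u c(u)/H_Y$ via Lemma~\ref{lem:endproof}, then forces the stated formula. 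Apart from this sign-and-normalization care, every ingredient — the hook determinant~\eqref{eq:hookdet}, the content sum, the multilinearity identity, and the vanishing of determinants with repeated rows/columns — is already in hand from the proof of Theorem~\ref{th:subleading}, so no genuinely new computation is needed.
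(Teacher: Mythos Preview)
Your treatment of the first identity is correct and matches the paper exactly: equating~\eqref{eq:FPL_sdt} and~\eqref{eq:FPL_sdt2} (both compute the same subleading coefficient) and rearranging gives $\frac{2}{H_Y}\sum_u c(u)=\sum_{(x,y)\in Cor(Y)}\frac{y-x}{H_{Y-(x,y)}}$ directly. No further work is needed there.

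For the second identity, however, there is a genuine gap. You correctly observe that the $k$th determinant on the left of~\eqref{eq:o1_det} is the hook determinant~\eqref{eq:hookdet} for the sequence $a^{(k)}$ obtained from $a$ by $a_k\mapsto a_k-2$. But your case analysis is incomplete: you claim that these determinants are either $1/H_{Y-hd}$ for a removable horizontal domino, or zero (``two equal rows''). That misses a third case. When $a_k=a_{k-1}+1$ and $a_{k-2}<a_k-2$, the sequence $a^{(k)}$ has distinct entries but is \emph{not} increasing; sorting it requires one transposition, so the determinant equals $-1/H_{Y'}$ for the diagram $Y'$ encoded by the sorted sequence. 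A direct check shows that these $Y'$ are precisely the diagrams $Y-vd$ obtained by removing a vertical domino. Thus the left-hand side of~\eqref{eq:o1_det} is \emph{already}
\[
\sum_{hd\in HD(Y)}\frac{1}{H_{Y-hd}}-\sum_{vd\in VD(Y)}\frac{1}{H_{Y-vd}},
\]
and equating it with the right-hand side $\bigl(\sum_k(a_k-k)(a_k-2n+k-1)\bigr)/H_Y=2\sum_u c(u)/H_Y$ finishes the proof. Your proposed mechanism --- getting the $HD$ sum from a row expansion and the $VD$ sum from a separate column/transpose expansion --- does not work: the column weight $j(j-1)$ you suggest does not produce the shift $1/(a_i-j)!\mapsto 1/(a_i-j+2)!$, and applying the whole argument to $Y^t$ merely reproduces the same identity with both sides negated. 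The minus sign on the $VD$ sum is not a content-flip artifact; it is the signature of the row swap needed to reorder $a^{(k)}$.
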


\begin{proof}
 We consider $a$, a sequence such that $Y(a)=Y$. The first formula consists simply in equating the expressions in \eqref{eq:FPL_sdt} and \eqref{eq:FPL_sdt2}. 

We will see that the second formula is a reformulation of Lemma~\ref{lem:endproof}. We already identified $\frac{2}{H_Y}\sum_{u\in Y}c(u)$ as the r.h.s. of the lemma, so we want identify the sums on dominos with the l.h.s. in Lemma~\ref{lem:endproof}. We note first that the $k$th determinant in ~\eqref{eq:o1_det} is of the form \eqref{eq:hookdet} for the sequence $a^{(k)}$ which coincides with $a$ except $a^{(k)}_k=a_k-2$. There are three different cases to consider: firstly, if $a^{(k)}$ has two equal terms, the corresponding determinant vanishes. Then, if $a^{(k)}$ is increasing, we obtain one of the terms in the sum over $HD(Y)$. Finally, for $a^{(k)}$ to have distinct terms when it's not increasing, it is necessary and sufficient that $a_k=a_{k-1}+1$ and $a_{k-2}<a_k-2$. The sequence obtained by switching $a_k-2$ and $a_{k-1}$ is then strictly increasing; if we exchange the rows in the determinant, we will get a negative sign. It is then easy to verify that such sequences are those obtained by removing a vertical domino from $Y$, which achieves the proof.
\end{proof}

As pointed out to the second author by V. F\'eray~\cite{ferayperso}, both formulas can in fact be deduced from the representation theory of the symmetric group, using the properties of Jucys-Murphy elements~\cite{jucys2,murphy}.


\section{The first root}
\label{sec:firstroot}

In this section we will prove the following theorem

 \begin{thm}
\label{th:firstroot}

 For any matching $\pi$ we have
\[
 \Psi_\pi(\tau,-1) = 
  \begin{cases}
   \Psi_{\pi'}(\tau)& \textrm{if }\pi=(\pi');\\
   0 & \textrm{otherwise.}
  \end{cases}
\]
\end{thm}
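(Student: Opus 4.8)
The natural framework is the multivariate polynomial extension $\Psi_\pi(z_1,\ldots,z_{2n})$ whose homogeneous limit (all $z_i$ equal) produces $\Psi_\pi(\tau,t)$ after adding $p$ nested arches; the integral formula and Equations~\eqref{eq:psiphi2}--\eqref{eq:psitauphitau} are the concrete avatar of this. I would work directly with the expression
\[
 \Psi_\pi(\tau,t)=\sum_a \C_{\pi,a}(\tau)\,\Phi_a(\tau,t),
\]
and specialize $t=-1$. The key observation is what the integral $\Phi_a(\tau,t)$ does at $t=-1$: from
\[
 \Phi_a(\tau,t)=\oint\ldots\oint\prod_i\frac{du_i}{2\pi i\,u_i^{a_i}}(1+\tau u_i)^t\prod_{j>i}(u_j-u_i)(1+\tau u_j+u_iu_j),
\]
the factor $(1+\tau u_i)^{-1}=\sum_{k\ge0}(-\tau)^k u_i^k$ shifts the effective exponent $a_i$ to $a_i-k$. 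So $\Phi_a(\tau,-1)$ can be rewritten as a sum over "lowered" sequences, and one expects massive cancellation unless the smallest index $a_1=1$, i.e. unless there is a small arch $\{1,2\}$ (equivalently, after the stability bookkeeping, unless $1$ and $2n$ form an arch in $\pi$, so $\pi=(\pi')$). The plan is therefore: (i) establish a clean reduction formula for $\Phi_a(\tau,-1)$ in terms of $\Phi_{a'}(\tau)$ for matchings $a'$ of size $n-1$; (ii) feed this into the sum $\sum_a\C_{\pi,a}(\tau)\Phi_a(\tau,-1)$; (iii) use the stability of the $\C_{\pi,a}(\tau)$ (the analogue of Lemma~\ref{lem:sta}, valid for the inverse matrix as remarked after it) together with Proposition~\ref{prop:Bases} to recognize the result as $\Psi_{\pi'}(\tau)$ when $\pi=(\pi')$, and as $0$ otherwise.

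\textbf{Carrying it out.} First I would make precise the antisymmetrization: since $\prod_{j>i}(u_j-u_i)$ is a Vandermonde, $\Phi_a(\tau,t)$ is an alternating sum over $S_n$ of products of one-variable contour integrals, and at $t=-1$ each one-variable integral $\oint \frac{du}{2\pi i\,u^{b}}(1+\tau u)^{-1}\prod(\text{local factors})$ becomes an explicit polynomial in $\tau$ that vanishes when the relevant exponent $b\le 0$. Tracking which terms survive, the condition for a nonzero contribution forces the role of the variable carrying the lowest exponent to be pinned, and the residual integral is exactly $\Phi_{a'}(\tau)$ where $a'$ is $a$ with its first entry (which must equal $1$) deleted and the rest shifted down by one — i.e. $a=(a')_1$. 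This is the technical heart and I expect it to be where the real work lies: one has to handle the interaction of the $(1+\tau u_i)^{-1}$ expansion with the quadratic factors $(1+\tau u_j+u_iu_j)$ and verify that no "accidental" surviving terms appear from other permutations. Having $\Phi_{(a')_1}(\tau,-1)=\Phi_{a'}(\tau)$ and $\Phi_a(\tau,-1)=0$ for $a$ not of the form $(a')_1$, the rest is bookkeeping.

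\textbf{Assembling.} Assume $\pi=(\pi')$, so $\pi=(\pi)'$-type sequences: then in $\sum_a\C_{\pi,a}(\tau)\Phi_a(\tau,-1)$ only $a=(a')_1$ survive, giving $\sum_{a'}\C_{(\pi'),(a')_1}(\tau)\Phi_{a'}(\tau)=\sum_{a'}\C_{\pi',a'}(\tau)\Phi_{a'}(\tau)=\Psi_{\pi'}(\tau)$, using stability of $\C$. If instead $\{1,2n\}$ is not an arch of $\pi$, then $\pi$ itself is not of the form $(\pi')$; by Proposition~\ref{prop:Bases} applied to the inverse matrix, $\C_{\pi,a}(\tau)=0$ unless $a\ge\pi$, and one checks combinatorially that $\pi\le(a')_1$ forces $\pi$ to begin with a small arch $\{1,2\}$, i.e. $\pi=(\pi')$ — a contradiction. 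Hence every surviving $a=(a')_1$ has $\C_{\pi,a}(\tau)=0$, and the whole sum is $0$. This gives the dichotomy in the statement. The main obstacle, to repeat, is step (i): proving the exact reduction $\Phi_a(\tau,-1)=\Phi_{a'}(\tau)\cdot[\,a=(a')_1\,]$, including showing the vanishing in all other cases; the assembly via Proposition~\ref{prop:Bases}, Equation~\eqref{eq:capi-tau} and the stability lemma is then routine.
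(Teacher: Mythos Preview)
Your overall architecture --- express $\Psi_\pi(\tau,-1)=\sum_a\C_{\pi,a}(\tau)\Phi_a(\tau,-1)$, simplify each $\Phi_a(\tau,-1)$, then use triangularity and stability of $\C$ --- is exactly how the paper proceeds. The gap is in your step~(i): the reduction $\Phi_a(\tau,-1)=[\,a=(a')_1\,]\,\Phi_{a'}(\tau)$ is \emph{false}. Take $n=3$ and $a=\{1,3,4\}$ (the matching $()(())$). This $a$ is not of the form $(a')_1$ since $a_2=3\neq 2$; yet a direct residue computation (set $u_1=0$, take the coefficient of $u_2$ in $(u_3-u_2)(1+\tau u_3+u_2u_3)$, then the coefficient of $u_3^{-1}$) gives $\Phi_{\{1,3,4\}}(\tau,-1)=1\neq 0$. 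The theorem still holds here because $\C_{\{1,3,4\},\{1,2,3\}}=-1$ produces the cancellation $(-1)\cdot\Phi_{\{1,2,3\}}(\tau,-1)+1\cdot\Phi_{\{1,3,4\}}(\tau,-1)=-1+1=0$; the point is that the vanishing of $\Psi_\pi(\tau,-1)$ when $\pi\neq(\pi')$ is genuinely a cancellation \emph{across} different $a$'s, not a termwise vanishing of $\Phi_a(\tau,-1)$.

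What the paper does instead is prove the sharper identity
\[
\Phi_a(\tau,-1)=\sum_{\epsilon:|\epsilon|=n-1} C_{a,(\epsilon)}(\tau)\,\Psi_\epsilon(\tau),
\]
valid for \emph{all} $a$. This cannot be obtained by manipulating the homogeneous integral alone: the paper introduces a multivariate deformation $\Phi_a(z_1,\ldots,z_{2n}\mid -1)$ which specializes to $\Phi_a(\tau,-1)$ at $z_i=1$, integrates out $w_1$ to land in the space $\mathcal{V}_{n-1}$, and then identifies the expansion coefficients $\widehat C_{a,\epsilon}$ with $C_{a,(\epsilon)}$ by evaluating both sides at the characterizing points $q^\epsilon$ (Lemma~\ref{lem:dual}). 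Once this identity is in hand, your assembling step goes through verbatim: $\sum_a\C_{\pi,a}C_{a,(\epsilon)}=\delta_{\pi,(\epsilon)}$, and the dichotomy follows. So the missing ingredient in your sketch is precisely this passage through the inhomogeneous model; the ``massive cancellation'' you hoped for inside a single $\Phi_a$ does not occur.
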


This is a special case of Conjecture~\ref{conj:realroots} by setting $\tau=1$:

\begin{cor}
 If $m_1(\pi)=1$, then $(t+1)$ divides the polynomial $A_\pi(t)$.
\end{cor}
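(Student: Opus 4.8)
The strategy is to work with the multivariate/bivariate polynomial extension $\Psi_\pi(\tau,t)$ rather than $A_\pi(t)$ directly, using the integral representation from Section~\ref{sec:pol_qKZ}. Recall from~\eqref{eq:psitauphitau} that $\Psi_\pi(\tau,t)=\sum_a \C_{\pi,a}(\tau)\,\Phi_a(\tau,t)$, where
\[
 \Phi_a(\tau,t)=\oint\ldots\oint \prod_{i=1}^n \frac{du_i}{2\pi i\, u_i^{a_i}}\,(1+\tau u_i)^t \prod_{j>i}(u_j-u_i)(1+\tau u_j+u_iu_j).
\]
The first step is to specialize $t=-1$ and understand $\Phi_a(\tau,-1)$. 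The factor $(1+\tau u_i)^{-1}$ is now a genuine rational function; expanding it as a geometric series $\sum_{k\ge0}(-\tau u_i)^k$ inside each contour integral is legitimate near $u_i=0$, and each term shifts the exponent $a_i$. The key point I expect is that $\Phi_a(\tau,-1)$ can be re-expressed as (a signed combination of) $\Phi_{a'}(\tau)$ for matchings $a'$ of size $n$ obtained by decrementing the $a_i$'s — in other words the $t=-1$ specialization maps back into the span of the $\Phi$'s at one size lower in the nesting, which is exactly where the case split $\pi=(\pi')$ versus not should originate.

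The second step is to transport this back to the $\Psi$ side using the change-of-basis matrices $\C_{\pi,a}(\tau)$ and the stability Lemma~\ref{lem:sta}. One would like to show directly that the linear functional ``evaluate at $t=-1$'' sends $\Psi_\pi(\tau,t)$ to $\Psi_{\pi'}(\tau)$ when $\pi$ has a single outermost arch $\{1,2n\}$ (i.e. $\pi=(\pi')$ and $a_1=1$), and to $0$ otherwise. The combinatorial content is precisely rule A for $m_1(\pi)$: $m_1(\pi)=0$ exactly when $\{1,2n\}$ is an arch, i.e. $a_1=1$; and $m_1(\pi)=1$ otherwise, since then exactly one arch separates the outer point pair $\{1,2n\}$ from the rest. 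I would set up the argument so that the vanishing in the second case is visible from the integral: when $a_1>1$, after the geometric-series manipulation the lowest relevant term still has a removable zero, forcing the whole evaluation to vanish. The inductive/structural input here is that $A_{(\pi)_\ell}(t)=A_\pi(t+\ell)$ (Proposition~\ref{prop:polynomials}), which is what makes the ``$\pi=(\pi')$'' branch consistent: $\Psi_{(\pi')}(\tau,-1)$ should equal $\Psi_{\pi'}(\tau,0)=\Psi_{\pi'}(\tau)$.

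The main obstacle, as I see it, is controlling the interaction between the decrement operations coming from $(1+\tau u_i)^{-1}$ and the Vandermonde-type factor $\prod_{j>i}(u_j-u_i)(1+\tau u_j+u_iu_j)$ — naive term-by-term expansion produces many contributions, and one must show they reorganize cleanly into $\Phi_{a'}(\tau)$'s with the right multiplicities, with all but the ``expected'' ones cancelling. A natural device is to introduce a new multivariate integral (the excerpt itself flags that the proof of Theorem~\ref{th:firstroot} ``involves the introduction of a new multivariate integral'') in which the $t$-dependence is carried by an auxiliary variable, perform the specialization at the level of that integral where cancellations are structurally transparent (e.g. via antisymmetrization over $S_n$ and a pole/degree count), and only then project back via the $\C_{\pi,a}(\tau)$. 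Once Theorem~\ref{th:firstroot} is established, the Corollary is immediate: setting $\tau=1$ gives $A_\pi(-1)=\Psi_\pi(1,-1)=0$ whenever $\pi\ne(\pi')$, i.e. whenever $m_1(\pi)=1$, so $(t+1)\mid A_\pi(t)$.
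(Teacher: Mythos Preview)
Your final paragraph — deducing the Corollary from Theorem~\ref{th:firstroot} by setting $\tau=1$ and noting that $m_1(\pi)=1$ exactly when $\pi$ has no arch $\{1,2n\}$, i.e.\ when $\pi\neq(\pi')$ — is correct and is precisely the paper's argument.

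The gap is in your sketch of Theorem~\ref{th:firstroot} itself. Expanding $(1+\tau u_i)^{-1}$ as a geometric series in the homogeneous integral for $\Phi_a(\tau,-1)$ does not lead to a tractable reorganization into $\Phi_{a'}(\tau)$'s; the cancellations you anticipate are not visible at that level, and ``an auxiliary variable'' is not enough. What the paper actually does is work in the full $2n$-variable spectral-parameter setting $\Phi_a(z_1,\ldots,z_{2n})$ of Section~\ref{sub:multi}: it defines a new integral $\Phi_a(z_1,\ldots,z_{2n}\mid -1)$ that specializes to $\Phi_a(\tau,-1)$ when all $z_i=1$, integrates out $w_1$ (which separates the poles at $z_1$ from the others — impossible in the homogeneous limit), and then observes that the result lies in the wheel-condition space $\mathcal{V}_{n-1}$ in the variables $z_2,\ldots,z_{2n-1}$. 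The identification $\widehat C_{a,\epsilon}=C_{a,(\epsilon)}$ is then pinned down by evaluating at the dual-basis points $q^\epsilon$ (Lemma~\ref{lem:dual}) and comparing with the specialization $z_1=q^{-2}$, $z_{2n}=q^2$ of $\Phi_a$. None of the essential ingredients — the $2n$ spectral parameters, the space $\mathcal{V}_n$, the dual basis at $q^\epsilon$, or the $q^{\pm 2}$ specialization — appears in your plan; without them the argument does not close.
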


Indeed $m_1(\pi)=1$ precisely when there is no arch between $1$ and $2n$ in $\pi$ (cf. Rule A in Section~\ref{sub:combdef}), which means that $\pi$ cannot be written as $(\pi')$. For the same reason, Theorem~\ref{th:firstroot} is in general a special case of Conjecture~\ref{conj:taurealroot}. 

To prove this theorem, we use the multiparameter version of the quantities $\Psi_\pi$.

\subsection{Multiparameter setting} \label{sub:multi}

We recall the principal properties of the multiparameter setting as presented in~\cite{artic47,hdr,artic43}. Note that in fact, it is this setting that was used originally to prove the results of Section~\ref{sec:pol_qKZ}; we presented things backwards because this was not needed outside of this section. 

There exist polynomials in $2n$ variables $\Psi_\pi(z_1,\ldots,z_{2n})$ with coefficients in $\mathbb{C}(q)$, indexed by matchings of size $n$, which are defined as solutions of a certain equation~\cite[Formulas 4.2 and 4.3]{hdr} (related to the qKZ equation introduced by Frenkel and Reshetikhin in~\cite{FR-qkz}), which is a generalization of the eigenvector equation defining the $\Psi_\pi$ (cf. Section~\ref{sub:O1}). Here $q$ and $\tau$ are related by $\tau=-q-q^{-1}$, so that $q=\pm e^{2i\pi/3}$ will give $\tau=1$. One can show that these polynomials form a basis of the following vector space $\mathcal{V}_n$:

\begin{defi}[$\mathcal{V}_n$]
 We define $\mathcal{V}_n$ as the vector space of all homogeneous polynomials in $2n$ variables, with total degree $\delta=n(n-1)$ and partial degree $\delta_i=n-1$ in each variable, which obey to the \emph{wheel condition}:
\[
 \left.P(z_1,\ldots,z_{2n})\right|_{z_k=q^2 z_j=q^4 z_i}=0\qquad \forall k>j>i.
\]
\end{defi}

This vector space has dimension $\frac{(2n)!}{n!(n+1)!}$, the number of matchings of size $|\pi|=n$. The polynomials $\Psi_\pi(z_1,\ldots,z_{2n})$ verify the following important lemma:

\begin{lemma}[\cite{artic41}]\label{lem:dual}
Let $q^\epsilon=\{q^{\epsilon_1},\ldots,q^{\epsilon_{2n}}\}$, where $\epsilon_i=\pm 1$ are such that changing $q^{-1}$ in $($ and changing $q$ in $)$ gives a valid parenthesis word $\pi(\epsilon)$. Then 
\[
 \Psi_\pi(q^\epsilon) = \tau^{d(\pi)} \delta_{\pi,\epsilon},
\]
where $\delta_{\pi,\epsilon}=1$ when we have $\pi(\epsilon)=\pi$.
\end{lemma}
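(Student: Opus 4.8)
The plan is to prove the evaluation by induction on $n$, exploiting the interplay between the combinatorics of the point $q^\epsilon$ and the recursive structure of the family $\Psi_\pi(z_1,\ldots,z_{2n})$ coming from the defining qKZ equation. The key elementary observation is that at $q^\epsilon$ the coordinates of an adjacent pair satisfy $z_{i+1}/z_i=q^{\epsilon_{i+1}-\epsilon_i}$, which equals $q^2$ exactly when $\epsilon_i=-1$ and $\epsilon_{i+1}=+1$, that is, precisely when the parenthesis word $\sigma:=\pi(\epsilon)$ has a little arch $()$ at positions $(i,i+1)$. Since any nonempty valid parenthesis word contains at least one little arch, I can always find such an index $i$ on which to recurse.

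First I would record the reduction of $\Psi_\pi$ under the specialization $z_{i+1}=q^2z_i$. The wheel condition forces vanishing on the loci $z_j=q^{-2}z_i$ for $j<i$ and $z_j=q^{4}z_i$ for $j>i+1$ (each obtained by completing $(z_i,z_{i+1})$ to a wheel triple), so the $2n-2$ corresponding linear factors must divide $\Psi_\pi|_{z_{i+1}=q^2z_i}$; a degree count in $\mathcal{V}_n$ then shows that the quotient is an element of $\mathcal{V}_{n-1}$ in the variables $\{z_j\}_{j\ne i,i+1}$. The input from the qKZ equation is the identification of this quotient: when $(i,i+1)$ is a little arch of $\pi$ it equals, up to an explicit prefactor and normalization, the polynomial $\Psi_{\hat\pi}$, where $\hat\pi$ is the size-$(n-1)$ matching obtained by deleting that arch, and when $(i,i+1)$ is not a little arch of $\pi$ the quotient vanishes identically. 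I would cite \cite{artic41,hdr} for this reduction.

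With this in hand the induction closes by a three-way case analysis of $\Psi_\pi(q^\epsilon)$, after choosing a little arch $(i,i+1)$ of $\sigma$. If $\pi=\sigma$, both have the arch; the reduction gives a prefactor times $\Psi_{\hat\pi}(q^{\epsilon'})$, where $\epsilon'$ is $\epsilon$ with the two entries removed, so $\pi(\epsilon')=\hat\sigma=\hat\pi$ and the inductive hypothesis yields $\tau^{d(\hat\pi)}$; one then checks that the prefactor evaluated at $q^\epsilon$ supplies exactly the missing factor $\tau^{d(\pi)-d(\hat\pi)}$, giving $\tau^{d(\pi)}$. If $\pi\ne\sigma$ but $\pi$ still has a little arch at $(i,i+1)$, the same reduction produces $\Psi_{\hat\pi}(q^{\epsilon'})$ with $\hat\pi\ne\hat\sigma$ (arch deletion is injective on matchings sharing that arch), so the inductive hypothesis gives $0$. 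Finally, if $\pi$ has no little arch at $(i,i+1)$, the quotient vanishes, hence $\Psi_\pi|_{z_{i+1}=q^2z_i}\equiv 0$ and in particular $\Psi_\pi(q^\epsilon)=0$. The base case $n=1$ is the single constant polynomial, normalized to $\tau^{0}=1$.

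The main obstacle is the bookkeeping of the prefactor in the diagonal case. A crude count shows that the bare product of the wheel factors evaluated at $q^\epsilon$ is a position-independent constant of the form $\bigl((q-1)(q^{-1}-1)\bigr)^{n-1}$, whereas the required exponent $d(\pi)-d(\hat\pi)=n-h$ (with $h$ the height of the little arch, as one computes directly from the opening-position sequence $a(\pi)$) genuinely depends on the arch chosen. Reconciling these demands the precise normalization of the basis $\Psi_\pi$ and the exact prefactor furnished by the qKZ reduction, rather than the wheel condition alone. Establishing that this normalized prefactor evaluates to $\tau^{n-h}$, together with the vanishing of the quotient in the no-arch case, are the two genuinely qKZ-dependent points; once they are granted, the induction above is routine.
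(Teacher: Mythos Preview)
The paper does not prove this lemma at all: it is simply quoted from~\cite{artic41} and then used as a black box (to identify a polynomial in $\mathcal{V}_n$ by its values at the points $q^\epsilon$, and to derive Formulas~\eqref{eq:C_1} and~\eqref{eq:C_2}). So there is no ``paper's own proof'' to compare your attempt with.

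That said, your sketch is essentially the standard argument from the cited literature: recurse on the size using the specialization $z_{i+1}=q^2 z_i$ at a little arch of $\pi(\epsilon)$, invoke the wheel condition for the divisibility, and use the qKZ exchange relation to identify the quotient (zero when $\pi$ lacks that arch, $\Psi_{\hat\pi}$ up to an explicit prefactor when it does). You are right to flag the prefactor bookkeeping as the genuine content. One small correction: your ``crude count'' that the bare product of wheel factors at $q^\epsilon$ is a position-independent $((q-1)(q^{-1}-1))^{n-1}$ is not quite accurate --- the individual factors $q^{\epsilon_j}-q^{-3}$ and $q^{\epsilon_j}-q^{3}$ depend on the signs $\epsilon_j$, so the product depends on the profile of openings and closings to the left and right of the chosen arch, not merely on $n$. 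This does not invalidate your plan, but it means the reconciliation with $\tau^{d(\pi)-d(\hat\pi)}$ already begins at the level of evaluating these linear factors, not only at the level of normalization. Once the precise recursion formula from~\cite{artic41,hdr} (with its exact prefactor) is written down, the induction you outline goes through; the vanishing in the ``no-arch'' case is indeed a consequence of the exchange relation degenerating to the Temperley--Lieb projector $e_i$ at $z_{i+1}=q^2 z_i$.
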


Since the $\Psi_\pi(z_1,\ldots,z_{2n})$ form a basis of $\mathcal{V}_n$, the lemma shows that a polynomial in this space is determined by its value on these points $q^\epsilon$. There is a small variation of this lemma, for the cases with a big arch $(1,2n)$, cf.~\cite[Formula 4.15]{hdr}\footnote{We do not use the same normalization as in~\cite{hdr}.}:
\[
 \Psi_{\pi}(q^{-2},q^{\epsilon},q^2)=\left(\frac{q-1}{q-q^{-1}}\right)^{2(n-1)} \tau^{d(\pi)} q^{-(n-1)} \delta_{(\epsilon),\pi}.
\]

\subsection*{Another basis}We now define another set of polynomials $\Phi_a(z_1,\ldots,z_{2n})$ (indexed by the increasing sequences defined in Section~\ref{representations}),  by the integral formula:
\begin{multline}\label{eq:qKZ_var}
 \Phi_a(z_1,\ldots,z_{2n})= c_n
 \prod_{1\le i<j\le 2n} (qz_i-q^{-1}z_j)\\ \times \oint\ldots\oint \prod_{i=1}^n \frac{dw_i}{2\pi i} \frac{\prod_{1\le i<j\le n}(w_j-w_i)(qw_i-q^{-1}w_j)}{\prod_{1\le k\leq a_i}(w_i-z_k)\prod_{a_i<k\le 2n}(qw_i-q^{-1}z_k)},
\end{multline}
where the integral is performed around the $z_i$ but not around $q^{-2}z_i$, and $c_n=(q-q^{-1})^{-n(n-1)}$. In the limit $z_i=1$ for all $i$ we simply obtain the equations for $\Phi_a(\tau)$ given in Section~\ref{sec:pol_qKZ}, by the change of variables $u_i=\frac{w_i-1}{q w_i - q^{-1}}$. In fact, these polynomials actually also live in $\mathcal{V}_n$ and we have
\begin{align*}
 \Phi_a(z_1,\ldots,z_{2n})=\sum_{\pi} C_{a,\pi}(\tau) \Psi_\pi (z_1,\ldots,z_{2n}),
\end{align*}
where the $C_{a,\pi}(\tau)$ are precisely the coefficients that appear in Section~\ref{sec:pol_qKZ}\footnote{In fact, this is the true definition of these coefficients, and the properties listed in Section~\ref{sec:pol_qKZ} are proved from this definition and~\eqref{eq:C_1}.}. Then
\begin{equation} \label{eq:C_1}
 \Phi_a(q^\epsilon) = \tau^{d(\epsilon)} C_{a,\epsilon}(\tau),
\end{equation}
which is an immediate application of Lemma~\ref{lem:dual}. Using the lemma's variation, we also have: 
\begin{equation} \label{eq:C_2}
 \Phi_a(q^{-2},q^\epsilon,q^2) = \tau^{d(\epsilon)} q^{-(n-1)} \left(\frac{q-1}{q-q^{-1}}\right)^{2(n-1)} C_{a,(\epsilon)}.
\end{equation}

\subsection{The proof}
By Lemma~\ref{lem:sta},
\[
 \Psi_{\pi}(-1)=\sum_a \C_{\pi,a} \Phi_{a} (-1). 
\]

We now introduce the following multiple integral, inspired by Formula~\eqref{eq:qKZ_var}:
\begin{multline}
\label{eq:minusone}
 \Phi_a (z_1,\ldots,z_{2n}|-1):=c_n\frac{z_1 (q-q^{-1})}{qz_1-q^{-1}z_{2n}} \prod_{1 \leq i<j\leq2n} (q z_i -q^{-1} z_j) \\
 \times \oint \ldots \oint \prod_i \frac{dw_i}{2i\pi} \frac{\prod_{i<j}(w_j-w_i)(q w_i -q^{-1}w_j)}{\prod_{j\leq a_i} (w_i-z_j)\prod_{j> a_i} (q w_i-q^{-1} z_j)} \prod_i \frac{q w_i - q^{-1} z_{2n}}{q z_1-q^{-1} w_i}.
\end{multline}

The essential property of $\Phi_a (z_1,\ldots,z_{2n}|-1)$ is that if all $z_i=1$, then we get $\Phi_{a}(-1)$; this requires the change of variables $u_i=\frac{w_i-1}{q w_i - q^{-1}}$ already mentioned after Formula~\eqref{eq:qKZ_var}. If we integrate in $w_1$,  we obtain:
\begin{multline*}
 \Phi_a (z_1,\ldots,z_{2n}|-1)=c_n \prod_{i=2}^{2n-1}(qz_i-q^{-1}z_{2n})\prod_{2\leq i<j}^{2n-1} (q z_i -q^{-1} z_j)\\
\times \oint \ldots \oint \prod_{i=2} \frac{dw_i}{2i\pi} \frac{\prod_{i<j}(w_j-w_i)(q w_i -q^{-1}w_j)}{\prod_{2\leq j\leq a_i} (w_i-z_j)\prod_{2n>j> a_i} (q w_i-q^{-1} z_j)}.
\end{multline*}

The r.h.s. is now factorized in one term which depends on $z_1$ and $z_{2n}$, but not on $a$, and one  which does not depend on $z_1$ and $z_{2n}$, and lives in the vector space $\mathcal{V}_{n-1}$ (with parameters $\{z_2,\ldots,z_{2n-1}\}$). Therefore we can write $\Phi_a(z_1,\ldots,z_{2n}|-1)$ as a linear combination of $\Psi_{\pi}(z_2,\ldots,z_{2n-1})$:

\begin{equation} \label{eq:-1_dec}
 \Phi_{a}(z_1,\ldots,z_{2n}|-1)= \frac{\prod_{i=2}^{2n-1}(qz_i-q^{-1}z_{2n})}{(q-q^{-1})^{2(n-1)}}
 \times \sum_\pi \widehat{C}_{a,\pi}\Psi_{\pi} (z_2,\ldots,z_{2n-1}).
\end{equation}

We have then the following essential lemma:
\begin{lemma}
 For any $a,\epsilon$ we have $\widehat{C}_{a,\epsilon}=C_{a,(\epsilon)}$.
\end{lemma}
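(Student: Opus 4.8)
The plan is to evaluate both sides of the desired identity $\widehat{C}_{a,\epsilon}=C_{a,(\epsilon)}$ by specializing the parameters $z_2,\ldots,z_{2n-1}$ to the points $q^{\epsilon'}$ of Lemma~\ref{lem:dual}, exactly as was done for $\Phi_a$ itself. Concretely, I would set $z_1=q^{-2}$, $z_{2n}=q^2$, and $(z_2,\ldots,z_{2n-1})=q^{\epsilon'}$ for a parenthesis word $\epsilon'$ of size $n-1$. On the one hand, Lemma~\ref{lem:dual} applied inside the $(n-1)$-variable space $\mathcal{V}_{n-1}$ gives $\Psi_\pi(q^{\epsilon'})=\tau^{d(\pi)}\delta_{\pi,\epsilon'}$, so the right-hand side of~\eqref{eq:-1_dec} collapses to a single term proportional to $\widehat{C}_{a,\epsilon'}$. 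On the other hand, the left-hand side $\Phi_a(q^{-2},q^{\epsilon'},q^2|-1)$ can be identified: the point $z_1=q^{-2}$, $z_{2n}=q^2$ is precisely the specialization for which the variation of Lemma~\ref{lem:dual} (Formula 4.15 of~\cite{hdr}) and Equation~\eqref{eq:C_2} were designed, and the prefactor $\frac{z_1(q-q^{-1})}{qz_1-q^{-1}z_{2n}}$ in~\eqref{eq:minusone} is engineered so that at $z_1=q^{-2}$ one recovers, up to explicit powers of $q$ and $(q-1)/(q-q^{-1})$, the value $\Phi_a(q^{-2},q^{\epsilon'},q^2)$ from~\eqref{eq:C_2}, which equals (a constant times) $C_{a,(\epsilon')}$.

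The key steps, in order, are: (i) check that $\Phi_a(z_1,\ldots,z_{2n}|-1)$, viewed as a function of $(z_2,\ldots,z_{2n-1})$ with $z_1,z_{2n}$ fixed generically, indeed lands in $\mathcal{V}_{n-1}$ after dividing by the explicit prefactor $\prod_{i=2}^{2n-1}(qz_i-q^{-1}z_{2n})/(q-q^{-1})^{2(n-1)}$ — this is what makes the decomposition~\eqref{eq:-1_dec} legitimate and is already asserted in the text just before the lemma; (ii) specialize $(z_2,\ldots,z_{2n-1})\to q^{\epsilon'}$ and use Lemma~\ref{lem:dual} to extract $\widehat{C}_{a,\epsilon'}$ from the right side; (iii) independently specialize $z_1=q^{-2}$ and $z_{2n}=q^2$ in the integral formula~\eqref{eq:minusone}, and compare with~\eqref{eq:qKZ_var} evaluated at $(q^{-2},q^{\epsilon'},q^2)$: the $w_1$-integral in~\eqref{eq:minusone} together with the extra factor $\prod_i\frac{qw_i-q^{-1}z_{2n}}{qz_1-q^{-1}w_i}$ and the scalar prefactor should reproduce exactly the structure of $\Phi_a$ in $n-1$ variables with the two boundary variables inserted at the extremes, i.e. the ``$(\epsilon')$'' shift; (iv) match the powers of $q$ and the factors $\left(\frac{q-1}{q-q^{-1}}\right)^{2(n-1)}$ on both sides so that the proportionality constants cancel, leaving $\widehat{C}_{a,\epsilon'}=C_{a,(\epsilon')}$; since this holds for every parenthesis word $\epsilon'$ and the $\Psi_\pi(z_2,\ldots,z_{2n-1})$ form a basis of $\mathcal{V}_{n-1}$, the lemma follows.

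The main obstacle I anticipate is step (iii): carefully performing the $w_1$-contour integral in~\eqref{eq:minusone} after the substitutions $z_1=q^{-2}$, $z_{2n}=q^2$ and verifying that the residue picked up, combined with the ratio $\prod_i\frac{qw_i-q^{-1}z_{2n}}{qz_1-q^{-1}w_i}$, transforms the denominators $\prod_{j\le a_i}(w_i-z_j)\prod_{j>a_i}(qw_i-q^{-1}z_j)$ into precisely the denominators of the $(n-1)$-variable $\Phi$ associated to the sequence $(a)$ built from $\epsilon'$ — in other words, that inserting the big arch $(1,2n)$ on the $z$-side corresponds on the $a$-side to the operation $a\mapsto(a)$ used in Lemma~\ref{lem:sta}. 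This is a bookkeeping computation about which poles lie inside the contour (the integral is taken around the $z_i$ but not around $q^{-2}z_i$), and getting the spurious factors $qz_1-q^{-1}w_i$ in the denominator to not introduce new poles inside the contour when $z_1=q^{-2}$ is the delicate point; everything else is tracking explicit monomial prefactors in $q$.
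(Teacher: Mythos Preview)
Your approach is essentially the paper's: relate $\Phi_a(z_1,\ldots,z_{2n}\mid -1)$ to $\Phi_a(q^{-2},z_2,\ldots,z_{2n-1},q^2)$ and then evaluate both sides at the points $q^{\epsilon'}$ using Lemma~\ref{lem:dual} and its variation to extract $\widehat{C}_{a,\epsilon'}$ and $C_{a,(\epsilon')}$. The obstacle you anticipate in step~(iii) is sidestepped in the paper by first integrating out $w_1$ in \emph{both} integral formulas (this was already done for $\Phi_a(\,\cdot\mid -1)$ just before the lemma) and only \emph{then} substituting $z_1=q^{-2}$, $z_{2n}=q^2$; after this the remaining $(n-1)$-fold integrals are literally identical in $z_2,\ldots,z_{2n-1}$, so the comparison reduces to matching explicit prefactors and no contour subtlety arises.
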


\begin{proof}
 First we integrate Formula~\eqref{eq:qKZ_var} in $w_1$:
\begin{multline*}
 \Phi_a(z_1,\ldots,z_{2n})=c_n \prod_{i=2}^{2n-1}(q z_i-q^{-1}z_{2n})\prod_{2\leq i<j<2n} (q z_i -q^{-1} z_j) \\
\times \oint \ldots \oint \prod_{i} \frac{dw_i}{2i\pi} \frac{\prod_{i<j}(w_j-w_i)(q w_i -q^{-1}w_j)}{\prod_{j\leq a_i} (w_i-z_j)\prod_{2n>j> a_i} (q w_i-q^{-1} z_j)}\prod_{i=2}^{2n-1}\frac{qz_1-q^{-1}w_i}{q w_i-q^{-1}z_{2n}}.
\end{multline*}
We then make the substitutions $z_1 \mapsto q^{-2}$ and $z_{2n} \mapsto q^2$:
\begin{multline*}
\Phi_a(q^{-2},z_2,\ldots,z_{2n-1},q^2)=c_n (-1)^{n-1}\prod_{i=2}^{2n-1}(z_i-1) \prod_{2\leq i<j<2n} (q z_i -q^{-1} z_j) \\
\times \oint \ldots \oint \prod_{i=2} \frac{dw_i}{2i\pi} \frac{\prod_{i<j}(w_j-w_i)(q w_i -q^{-1}w_j)}{\prod_{2\leq j\leq a_i} (w_i-z_j)\prod_{2n>j> a_i} (q w_i-q^{-1} z_j)}.
\end{multline*}

Comparing with the formula obtained for $\Phi_{a}(z_1,\ldots,z_{2n}|-1)$, we get:
\[
 \Phi_{a}(z_1,\ldots,z_{2n}|-1)= (-1)^{n-1} \prod_{i=2}^{2n-1}\frac{q z_i-q^{-1}z_{2n}}{z_i-1} \Phi_{a}(q^{-2},z_2,\ldots,z_{2n-1},q^2),
\]
which thanks to~\eqref{eq:-1_dec} becomes:
\[
  \sum_\epsilon \widehat{C}_{a,\epsilon} \Psi_\epsilon(z_2,\ldots,z_{2n-1})=\frac{(q-q^{-1})^{2(n-1)}}{\prod_{i=2}^{2n-1}{z_i-1}} (-1)^{n-1} \sum_{\pi} C_{a,\pi} \Psi_\pi(q^{-2},z_2,\ldots,z_{2n-1},q^2).
\]
Now the l.h.s. lives in $\mathcal{V}_{n-1}$, so it is determined by the points $(q^\sigma)$ (cf. Lemma~\ref{lem:dual} and its variation):
\[
 \sum_\epsilon \widehat{C}_{a,\epsilon} \delta_{\epsilon,\sigma} \tau^{d(\epsilon)}= \sum_\pi C_{a,\pi} \delta_{\pi,(\sigma)} \tau^{d(\pi)},
\]
This simplifies to $\widehat{C}_{a,\sigma}\tau^{d((\sigma))}=C_{a,\sigma} \tau^{d(\sigma)}$; since $d(\sigma)=d((\sigma))$, we get the expected result.
\end{proof}

We can now finish the proof of the theorem. In the limit $z_i=1$ for all $i$, Equation~\eqref{eq:-1_dec} becomes
\[
   \Phi_{a}(-1) = \sum_{\pi:|\pi|=n-1} \widehat{C}_{a,\pi} \Psi_\pi.
 \]

Using the lemma, and multiplying by $\C_{\pi,a}$, this becomes:

\begin{align*}
  \sum_a\C_{\pi,a}\Phi_{a}(-1)=&\sum_a\sum_{\epsilon}\C_{\pi,a}C_{a,(\epsilon)}\Psi_\epsilon \\
  \Leftrightarrow\qquad\qquad\Psi_{\pi}(-1)=&\sum_{\epsilon} \delta_{\pi,(\epsilon)} \Psi_\epsilon,
\end{align*}
which achieves the proof.


\section{\texorpdfstring{The $\tau$ case}{The tau case}}
\label{sec:tau}

The bivariate polynomials $\Psi_{\pi}(\tau,t)$ were introduced in Section~\ref{sec:pol_qKZ}. In this section we present conjectures mimicking those of Section~\ref{sec:conj} for these polynomials.

\subsection{Conjectures}
\label{sub:tauconj}

We will give four conjectures, each of them being in fact a natural extension of one of the conjectures of Section~\ref{sec:conj}. All of these conjectures have been verified for all $\Psi_{\pi}(\tau,t)$ with $|\pi|\leq 8$. We begin with roots:

\begin{conj}
 \label{conj:taurealroot}
 Considering $\Psi_{\pi}(\tau,t)$ as a polynomial in $t$ with coefficients in $\mathbb{Q}[\tau]$, the real roots of $\Psi_{\pi}(\tau,t)$  are negative integers $-p$ and with multiplicity given by $m_p(\pi)$:
\[
 \Psi_{\pi}(\tau,t) = \frac{1}{|d(\pi)|!} \times \prod_{i=1}^{|\pi|} (t+i)^{m_i(\pi)} Q_{\pi} (\tau,t),
\]
where $Q_{\pi} (\tau,t)$ is a polynomial in $t$ with no real roots. 
\end{conj}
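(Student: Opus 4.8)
The plan would be to split the statement into a \emph{divisibility} part --- that $\prod_{i=1}^{|\pi|-1}(t+i)^{m_i(\pi)}$ divides $d(\pi)!\,\Psi_\pi(\tau,t)$ inside $\mathbb{Q}[\tau][t]$ --- and a \emph{completeness} part --- that the resulting cofactor $Q_\pi(\tau,t)$ has no further real roots. The numerical compatibility of the two (namely that $\deg_t Q_\pi=d(\pi)-\sum_i m_i(\pi)$ is a nonnegative even integer) is already supplied by Proposition~\ref{prop:compatroots}, word for word as for Conjecture~\ref{conj:realroots}. A preliminary reduction: from Lemma~\ref{lem:sta} and~\eqref{eq:psitauphitau} one gets the $\tau$-version of Proposition~\ref{prop:polynomials}, $\Psi_{(\pi)}(\tau,t)=\Psi_\pi(\tau,t+1)$, and since also $m_1((\pi))=0$ and $m_{p+1}((\pi))=m_p(\pi)$, it suffices to treat matchings $\pi$ not of the form $(\pi')$, i.e. with $m_1(\pi)=1$; for such $\pi$ all of the roots $-1,\dots,-(|\pi|-1)$ must be produced directly, since the stability no longer helps.

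For the divisibility I would generalize the multiparameter argument of Section~\ref{sec:firstroot}. The case $p=1$, with its multiplicity $m_1(\pi)=1$, is exactly Theorem~\ref{th:firstroot}. For a general $p$ I would introduce multiple integrals $\Phi_a(z_1,\dots,z_{2n}|-p)$, obtained from~\eqref{eq:qKZ_var} by inserting a rational prefactor in the parameters $z_1,\dots,z_p,z_{2n-p+1},\dots,z_{2n}$ and in the integration variables $w_1,\dots,w_p$, tuned so that (i) in the homogeneous limit $z_i=1$ one recovers $\Phi_a(-p)$ after the substitution $u_i=\frac{w_i-1}{qw_i-q^{-1}}$ --- which uses the identity $(1+\tau u_i)^{-1}=\frac{qw_i-q^{-1}}{q-q^{-1}w_i}$ --- and (ii) integrating successively in $w_1,\dots,w_p$ factors the result into a prefactor involving only $z_1,\dots,z_p,z_{2n-p+1},\dots,z_{2n}$ and an element of $\mathcal{V}_{n-p}$ in the remaining parameters. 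Evaluating on the points $q^\epsilon$ through Lemma~\ref{lem:dual} and its variation, and using $\Psi_\pi(\tau,-p)=\sum_a\C_{\pi,a}(\tau)\Phi_a(-p)$, this would express $\Psi_\pi(\tau,-p)$ as a sum over matchings $\epsilon$ of size $n-p$ of products of $C$-matrix entries and $\Psi_\epsilon(\tau)$. The combinatorics of the prefactor should force this combination to vanish precisely when the first and last $p$ points of $\pi$ are linked to its interior --- i.e. when $m_p(\pi)\neq0$ --- and otherwise, when $\pi=\alpha\circ\beta$ with $|\alpha|=p$, to collapse to $\Psi_\alpha(\tau,-p)\,\Psi_\beta(\tau,0)$, which is the natural $\tau$-analogue of Conjecture~\ref{conj:dec}.

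The genuinely hard point is to obtain the \emph{exact} multiplicity $m_p(\pi)$, not merely $m_p(\pi)\ge 1$: one must show $\partial_t^k\Psi_\pi(\tau,t)\big|_{t=-p}=0$ for $0\le k<m_p(\pi)$ and that this order is not exceeded. In the integral representation the whole $t$-dependence sits in the factors $(1+\tau u_i)^t$ (equivalently in the $w$-prefactors above), so $\partial_t^k$ inserts $k$ logarithmic factors; the required vanishing should come from a refined residue analysis in which the symmetric combination of these logarithmic insertions cancels to exactly the order dictated by the rim decomposition of $Y(\pi)$. I expect Theorem~\ref{th:equivab}, in the form of Rule B with its rim-by-rim contributions $\{k\}\cup\{i,i-1,\dots,k+1\}\cup\{j,j-1,\dots,k+1\}$, to be precisely what bookkeeps the independent sources of vanishing, and making this quantitative uniformly in $p$ --- ideally by peeling all $p$ layers at once and tracking how each of the $m_p(\pi)$ crossing arches contributes one factor $(t+p)$ --- is where I anticipate the bulk of the work and the main obstacle.

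For the completeness part I would first establish that $\Psi_\pi(\tau,-p)\neq0$ for every integer $p\ge|\pi|$, reducing this by iterated use of the $\tau$-analogue of Conjecture~\ref{conj:dec} (along decompositions with rectangular outer part) to the nonvanishing of $\Psi_\pi(\tau,-|\pi|)$ --- the $\tau$-deformation of $G_\pi$ --- as a polynomial in $\tau$, which one can try to read off from the explicit $C$-matrix, using~\eqref{eq:capi-tau} to reduce to $\tau\ge0$. The same analogue of Conjecture~\ref{conj:dec} covers the values at $-p$ with $1\le p\le|\pi|-1$ and $m_p(\pi)=0$. It then remains to exclude non-integer real roots, for which I would invoke the $\tau$-analogue of Conjecture~\ref{conj:posX} (positivity of the coefficients of $Q_\pi(\tau,t)$, hence $Q_\pi(\tau,t)>0$ for $t\ge0$, $\tau\ge0$) together with the sign pattern of the values $Q_\pi(\tau,-p)$ forced by the parity count of Proposition~\ref{prop:compatroots} and an interlacing argument on the interval $(-|\pi|,0)$. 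This last step seems the least likely to have a short proof, and is plausibly of the same depth as Conjecture~\ref{conj:realroots} itself.
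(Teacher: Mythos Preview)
The statement you are attempting is Conjecture~\ref{conj:taurealroot}, which the paper explicitly presents as an \emph{open conjecture}: there is no proof in the paper to compare against. The only evidence offered is the numerical check for $|\pi|\le 8$ and the special case $p=1$ (Theorem~\ref{th:firstroot}). So the task of ``comparing your proof with the paper's proof'' does not apply here; what one can compare is your proposed strategy with the hints the paper gives about how one might attack the problem.

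On that score, your plan is essentially the programme the authors themselves outline in Section~7 (``Further questions''): extend the multivariate integral~\eqref{eq:minusone} from $p=-1$ to general $p$, integrate out the first $p$ variables to land in $\mathcal{V}_{n-p}$, and read off the vanishing and the factorization from Lemma~\ref{lem:dual}. The paper says explicitly that ``the expressions obtained are fairly more complicated and intricate than in the case $p=-1$. This is work in progress.'' So you have correctly identified both the natural line of attack and its main obstruction.

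Where your proposal is honest but incomplete is precisely where the paper is silent: (i) obtaining the \emph{exact} multiplicity $m_p(\pi)$ rather than a simple vanishing, and (ii) excluding further real roots. For~(i), your idea of differentiating the integral in $t$ and tracking cancellations via the rim decomposition (Rule~B) is plausible but entirely heuristic; nothing in the paper suggests this works, and you provide no mechanism for why the order of vanishing should be \emph{exactly} $m_p(\pi)$ rather than at least $1$. For~(ii), you propose to invoke the $\tau$-analogue of Conjecture~\ref{conj:posX} and an interlacing argument --- but that positivity is itself an open conjecture in the paper, so you are reducing one conjecture to another of apparently equal difficulty, as you yourself acknowledge in your last sentence. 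In short: this is a reasonable research plan, not a proof, and the paper contains nothing that would let you close the gaps you have identified.
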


 For the example $\pi_0$ of Section~\ref{sub:combdef} we compute:
\begin{align*}
\label{ex_n8}
 \Psi_{\pi_0}(\tau,t)=&\frac{(2 + t)(3 + t)^2(4 + t)^2(5 + t)^2(6 + t)(7 + t)}{145152000} \tau^9\\
   &\times (84000 + 440640 \tau^2 + 151440 t \tau^2 + 13200 t^2 \tau^2 + 523680 \tau^4 + 394360 t \tau^4  \\&+ 
 110520 t^2 + \tau^413670 t^3 \tau^4 + 630 t^4 \tau^4 + 182880 \tau^6 + 211656 t \tau^6  \\&+ 
 101716 t^2 \tau^6 + 25990 t^3 \tau^6+ 3725 t^4 \tau^6 + 284 t^5 \tau^6 + 9 t^6 \tau^6). 
\end{align*}

We then have the natural generalization of the factorization conjecture:
\begin{conj}
\label{conj:taudec}
 Let $\pi$ be a matching and $p$ be a integer between $1$ and $|\pi|-1$ such that $m_p(\pi)=0$, so that $\pi=\alpha \circ \beta$ with $|\alpha|=p$; then 
 \[
  \Psi_{\pi}(\tau,-p)= G_\alpha(\tau) \Psi_{\beta}(\tau).
 \]
\end{conj}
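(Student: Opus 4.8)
The statement to prove is Conjecture~\ref{conj:taudec}, asserting that when $m_p(\pi)=0$ and $\pi=\alpha\circ\beta$ with $|\alpha|=p$, one has $\Psi_\pi(\tau,-p)=G_\alpha(\tau)\Psi_\beta(\tau)$. The natural strategy is to imitate and generalize the argument of Section~\ref{sec:firstroot}, which handled the case $p=1$, $\alpha=()$ (where $G_{()}(\tau)=1$). The key tool there was the multiparameter polynomial $\Phi_a(z_1,\ldots,z_{2n})$ and the factorization that occurs when one specializes $z_1\mapsto q^{-2}$, $z_{2n}\mapsto q^2$ and integrates out $w_1$. The plan is: first, define a suitable $p$-fold generalization $\Phi_a(z_1,\ldots,z_{2n}\mid -p)$ as a multiple-integral deformation that reduces to $\Phi_a(\tau,-p)$ when all $z_i=1$ (via the change of variables $u_i=(w_i-1)/(qw_i-q^{-1})$), built by inserting $p$ factors analogous to the single factor $\frac{z_1(q-q^{-1})}{qz_1-q^{-1}z_{2n}}\prod_i\frac{qw_i-q^{-1}z_{2n}}{qz_1-q^{-1}w_i}$ used for $p=1$; second, integrate out $w_1,\ldots,w_p$ to exhibit a factorization of $\Phi_a(z_1,\ldots,z_{2n}\mid-p)$ into a part depending only on the ``outer'' variables $z_1,\ldots,z_p,z_{2n-p+1},\ldots,z_{2n}$ (independent of $a$) times a polynomial in $\mathcal{V}_{n-p}$ in the ``inner'' variables $z_{p+1},\ldots,z_{2n-p}$; third, expand that inner polynomial in the basis $\Psi_\pi(z_{p+1},\ldots,z_{2n-p})$ and identify the coefficients by evaluating at the points $q^\sigma$ using Lemma~\ref{lem:dual} (and the appropriate nested-arch variant from \cite{hdr}).

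\textbf{Key steps in order.} (i) Relate the specialization $z_1\mapsto q^{-2p}$-type substitutions (more precisely, the substitution corresponding to collapsing the first $p$ and last $p$ points into $p$ nested arches, i.e. $z_i=q^{-2i}$ for $i\le p$ and $z_{2n+1-i}=q^{2i}$ for $i\le p$, or whatever the correct specialization is for $\Psi_{(\pi')_p}$) to $\Phi_a(z\mid-p)$, exactly as the lemma in Section~\ref{sec:firstroot} related $\Phi_a(z\mid-1)$ to $\Phi_a(q^{-2},z_2,\ldots,z_{2n-1},q^2)$. This step should yield an analogue of the identity $\widehat{C}_{a,\epsilon}=C_{a,(\epsilon)}$, namely that the coefficients $\widehat{C}_{a,\pi}^{(p)}$ appearing in the decomposition of $\Phi_a(z\mid-p)$ equal, up to a scalar involving $G_\alpha(\tau)$, the coefficients $C_{a,(\pi)_p}(\tau)$. (ii) Take the homogeneous limit $z_i=1$: multiplying \eqref{eq:psitauphitau}-style by $\C_{\pi,a}(\tau)$ and summing over $a$, the stability Lemma~\ref{lem:sta} (which holds for $\C$ as well) should collapse the sum $\sum_a\C_{\pi,a}(\tau)C_{a,(\epsilon)_p}(\tau)$ to $\delta_{\pi,(\epsilon)_p}$ on the inner matchings, leaving exactly $\Psi_\pi(\tau,-p)=\sum_\epsilon\delta_{\pi,(\epsilon)_p}\,G_\alpha(\tau)\Psi_\epsilon(\tau)=G_\alpha(\tau)\Psi_\beta(\tau)$, using $\pi=\alpha\circ\beta$ and $|\alpha|=p$. (iii) Separately verify that the scalar prefactor produced by integrating out $w_1,\ldots,w_p$ and specializing is indeed $G_\alpha(\tau)=\Psi_\alpha(\tau,-|\alpha|)$; this should follow by applying the same construction with $\pi=\alpha$ itself (i.e. taking $\beta$ empty), since then $\Psi_\alpha(\tau,-p)$ is by definition $G_\alpha(\tau)$ and the inner polynomial becomes a constant.

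\textbf{Main obstacle.} The delicate point is step (i): finding the correct multiparameter integrand $\Phi_a(z\mid-p)$ and checking that, after integrating out $p$ of the $w$-variables, it factors cleanly with the ``outer'' factor being \emph{independent of $a$}. For $p=1$ this worked because the extra factor $\prod_i\frac{qw_i-q^{-1}z_{2n}}{qz_1-q^{-1}w_i}$ combined with a single residue to produce an $a$-independent term; for general $p$ one must iterate this and control the residues at $w_1=z_1,\ldots,w_p=z_p$ (or at the appropriate specialized values) so that no $a$-dependence leaks into the outer factor, and so that the inner factor genuinely lands in $\mathcal{V}_{n-p}$ with the right wheel condition and degrees. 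A related subtlety is that the hypothesis $m_p(\pi)=0$ is exactly what guarantees $\pi=\alpha\circ\beta$ splits with no arch crossing the cut, which is what makes the specialization of $\Psi_\pi$ factor as $G_\alpha(\tau)\Psi_\beta(\tau)$ rather than producing a more complicated combination; one must use this hypothesis at the point where Lemma~\ref{lem:dual}'s nested-arch variant is invoked, ensuring that $\Psi_\pi(q^\sigma)$-type evaluations vanish unless $\pi$ has the required $p$ nested arches. Carrying out the residue bookkeeping for general $p$, and pinning down the exact form of the specialization of the $z_i$ that corresponds to adding $p$ nested arches (the analogue of $z_1=q^{-2},z_{2n}=q^2$), is where the real work lies; everything downstream is a routine adaptation of Section~\ref{sec:firstroot}.
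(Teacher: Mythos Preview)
The statement you are trying to prove is a \emph{conjecture} in the paper, not a theorem: the paper does not provide a proof of Conjecture~\ref{conj:taudec}. Its only evidence is the numerical verification for $|\pi|\le 8$ and the special case $p=1$ (Theorem~\ref{th:firstroot}). So there is no ``paper's own proof'' to compare against.

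That said, your plan is exactly the line of attack the authors themselves sketch in Section~7 (Further questions): extend the multivariate integral $\Phi_a(z_1,\ldots,z_{2n}\mid -1)$ of~\eqref{eq:minusone} to general $p$, integrate out $p$ of the $w$-variables, and hope for a clean factorization. The paper explicitly remarks that ``the expressions obtained are fairly more complicated and intricate than in the case $p=-1$'' and labels this ``work in progress.'' Your write-up is honest about this same obstacle---you identify step~(i), the $a$-independence of the outer factor after $p$ residues, as the crux---but you do not actually carry it out. What you have written is therefore a strategy, not a proof: the decisive computation (the iterated residue calculation and the identification of the outer prefactor with $G_\alpha(\tau)$) is asserted rather than performed, and the paper itself signals that this step is nontrivial. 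In particular, your step~(iii), deducing that the prefactor equals $G_\alpha(\tau)$ by ``taking $\beta$ empty,'' presupposes that the factorization in step~(i) has already been established uniformly in $a$ and $\pi$, so it cannot be used as an independent check.

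In short: your approach coincides with the one the paper proposes but leaves open, and the gap you flag is precisely the gap the authors could not close.
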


Here $G_\pi (\tau)$ is naturally defined by $ G_\pi (\tau):=\Psi_\pi(\tau,-|\pi|)$, while $\Psi_{\pi}(\tau)$ was defined in Section~\ref{sec:pol_qKZ} and is equal to $\Psi_{\pi}(\tau,0)$. The values for $|\pi|=4$ are given in Appendix~\ref{app:examples}. These $G_{\pi}(\tau)$ present several properties:

\begin{conj}\label{conj:dec_tau}
 We have  $G_{\pi}(\tau)=(-1)^{d(\pi)} g_{\pi}(\tau)$  where $g_\pi{\tau}$ is a polynomial with \emph{nonnegative} integer coefficients. Furthermore, we have the sum rule:
 
\[
 \sum_\pi G_\pi (\tau) = \sum_\pi \Psi_\pi (-\tau).
\]

\end{conj}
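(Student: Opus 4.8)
The plan is to focus on the sum rule $\sum_\pi G_\pi(\tau)=\sum_\pi \Psi_\pi(-\tau)$, since the first assertion (the sign/positivity of $g_\pi(\tau)$) is of the same flavor as Conjecture~\ref{conj:posX} and would presumably only be accessible, if at all, via the same machinery as Theorem~\ref{th:subleading}; I would state it as a conjecture and concentrate the argument on the identity. The natural tool is the $\tau$-analogue of the polynomial sum rule~\eqref{eq:polsumrule}, namely $\Psi_{()^n}(\tau,t)=\sum_{\pi:|\pi|=n}\Psi_\pi(\tau,t-1)$, which is exactly the statement proved in~\cite{artic47} before specializing $\tau=1$. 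Evaluating this at $t=1-n$ gives $\sum_\pi G_\pi(\tau)=\Psi_{()^n}(\tau,1-n)$, so the whole problem reduces to identifying $\Psi_{()^n}(\tau,1-n)$ with $\sum_\pi\Psi_\pi(-\tau)$.

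The first step, then, is to compute $\Psi_{()^n}(\tau,t)$ explicitly enough near $t=1-n$. Using~\eqref{eq:psitauphitau} with $\pi=()^n$, one has $\Psi_{()^n}(\tau,t)=\sum_a \C_{()^n,a}(\tau)\,\Phi_a(\tau,t)$, and from the integral formula for $\Phi_a(\tau,t)$ the factor $(1+\tau u_i)^p$ becomes, at $p=1-n$, the rational function $(1+\tau u_i)^{1-n}$; more to the point, I would instead unfold $\Psi_{()^n}(\tau,t)$ in terms of the quantities $\Psi_\pi(\tau)$ with fewer arches. The key structural fact to exploit is Conjecture~\ref{conj:taudec} itself applied to $\pi=()^n$: when $n$ is odd, $m_{n-1}(()^n)=0$ and the decomposition $()^n=()^{n-1}\circ()$ gives $\Psi_{()^n}(\tau,1-n)=G_{()^{n-1}}(\tau)\Psi_{()}(\tau)=G_{()^{n-1}}(\tau)$; when $n$ is even, $1-n$ is a root of $\Psi_{()^n}(\tau,t)$ by Conjecture~\ref{conj:taurealroot}. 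This mirrors precisely the Remark after Conjecture~\ref{conj:gpi}: so modulo Conjectures~\ref{conj:taurealroot} and~\ref{conj:taudec} one already gets $\sum_\pi G_\pi(\tau)=0$ for $n$ even and $=G_{()^{n-1}}(\tau)$ for $n$ odd. Hence what really needs to be proved unconditionally is the independent evaluation: $\sum_\pi \Psi_\pi(-\tau) = 0$ for $n$ even, and $=G_{()^{n-1}}(\tau)$ for $n$ odd.

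To get $\sum_\pi \Psi_\pi(\tau)$ in closed form I would use Equations~\eqref{eq:psiphi2} and~\eqref{eq:psiphi1}: summing~\eqref{eq:psiphi2} over $\pi$ gives $\sum_\pi\Psi_\pi(\tau)=\sum_a\left(\sum_\pi\C_{\pi,a}(\tau)\right)\Phi_a(\tau)$, and the inner sum is a column sum of the inverse matrix $C^{-1}(\tau)$, which can be handled because $C(\tau)$ is triangular with $1$'s on the diagonal (Proposition~\ref{prop:Bases}). Then the parity relation~\eqref{eq:capi-tau}, $C_{a,\pi}(-\tau)=(-1)^{d(a)-d(\pi)}C_{a,\pi}(\tau)$, transfers to $\C_{\pi,a}(\tau)$ as noted in the excerpt, so $\sum_\pi\Psi_\pi(-\tau)$ is obtained from $\sum_\pi\Psi_\pi(\tau)$ by inserting signs $(-1)^{d(a)-d(\pi)}$; the sum rule $\sum_\pi\Psi_\pi(\tau)=\Phi_{\text{(something)}}$-type identity together with this sign twist should collapse to the stated right-hand sides. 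I expect the main obstacle to be exactly this last collapse: proving that the signed sum $\sum_a\left(\sum_\pi(-1)^{d(a)-d(\pi)}\C_{\pi,a}(\tau)\right)\Phi_a(\tau)$ equals $0$ (resp. $G_{()^{n-1}}(\tau)$) will require either a clean combinatorial identity for the column sums of $C^{-1}(-\tau)$, or recognizing the result as the homogeneous-limit evaluation of the multiparameter polynomial $\sum_\pi\Psi_\pi(z_1,\dots,z_{2n})$ at a special point (which is where the $(A_n^V)^2$-type quantities at $\tau=1$ come from, cf.\ the appearance of $A_{2n+1}^V$). Tying that geometric/representation-theoretic interpretation of $\sum_\pi\Psi_\pi$ to the $\tau$-deformation is the delicate point, and absent it the honest status of the identity remains conjectural for general $\tau$, with the $\tau=1$ specialization being Conjecture~\ref{conj:gpi}.
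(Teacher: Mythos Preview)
The statement you are addressing is a \emph{conjecture} in the paper, not a theorem: the paper offers no proof, only the remark that it has been checked for $|\pi|\le 8$. So there is no ``paper's own proof'' to compare against, and your proposal should be read as a strategy rather than a proof.

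That said, your strategy is the natural one and is exactly the $\tau$-deformation of the Remark the paper makes after Conjecture~\ref{conj:gpi}: use the polynomial sum rule $\Psi_{()^n}(\tau,t)=\sum_\pi\Psi_\pi(\tau,t-1)$ at $t=1-n$ to rewrite $\sum_\pi G_\pi(\tau)$ as $\Psi_{()^n}(\tau,1-n)$, then invoke Conjectures~\ref{conj:taurealroot} and~\ref{conj:taudec} to evaluate the latter. You correctly identify that this only reduces one conjecture to two others, and that the remaining identification of $\sum_\pi\Psi_\pi(-\tau)$ with $0$ or $G_{()^{n-1}}(\tau)$ is the genuine obstacle. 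Your sketch of attacking that via column sums of $C^{-1}(-\tau)$ and the parity relation~\eqref{eq:capi-tau} is reasonable but, as you yourself note, does not close: you never produce the ``clean combinatorial identity'' or the multiparameter specialization that would finish the job. In particular, the parity twist $\Psi_\pi(-\tau)=(-1)^{d(\pi)}\Psi_\pi(\tau)$ (stated in the paper just after Conjecture~\ref{conj:dec_tau}) already gives $\sum_\pi\Psi_\pi(-\tau)=\sum_\pi(-1)^{d(\pi)}\Psi_\pi(\tau)$ directly, without passing through $C^{-1}$; but matching this signed sum to $\Psi_{()^n}(\tau,1-n)$ is still open.

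In short: your proposal is not a proof, and neither is anything in the paper; your outline matches the partial reductions the paper records at $\tau=1$, and you are honest about where the argument stalls.
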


We will show in Section ~\ref{sub:lead} that the leading term of $g_\pi(\tau)$ is $\tau^{d(\pi)}$; we will actually compute the leading term in $\tau$ of $\Psi_\pi(\tau,p)$ for various integer values of $p$. Another property of these $G_\pi(\tau)$ is that 
\[
 G_\pi(\tau)=(-1)^{d(\pi)} G_\pi (-\tau),
\]
so that they are odd or even polynomials depending on the parity of $\pi$. More generally, one has $\Psi_\pi(\tau,t)=(-1)^{d(\pi)} \Psi_\pi(-\tau,t)$. Indeed, this is obvious for the polynomials
\[
 \Phi_a = \oint \ldots \oint \prod_i \frac{du_i}{u_i^{a_i}}(1+\tau u_i) \prod_{j>i} (u_j-u_i)(1+\tau u_j+u_i u_j),
\]
and as the basis transformation respects this parity, this holds for $\Psi_\pi(\tau,t)$ as well.


Finally, introducing a $\tau$ doesn't change the positivity:

\begin{conj}
 The bivariate polynomial $d(\pi)!P_\pi (\tau,t)$ has nonnegative integer coefficients.
\end{conj}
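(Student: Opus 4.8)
Write $P_\pi(\tau,t):=d(\pi)!\,\Psi_\pi(\tau,t)$, the $\tau$-refinement of the integer polynomial $P_\pi$ of Theorem~\ref{zuber}; the claim is that $P_\pi(\tau,t)$ has nonnegative integer coefficients, and at $\tau=1$ this is exactly Conjecture~\ref{conj:posX}. The integrality would follow, as in Proposition~\ref{prop:integervalues}, once one knows that $d(\pi)!\,\Psi_{(\pi)_p}(\tau)$ lies in $\mathbb{Z}[\tau]$ for all $p\ge 0$, so I would concentrate on positivity, which is the substantive point. The natural starting point is the expansion
\eqref{eq:psitauphitau}, combined with an explicit residue evaluation of $\Phi_a(\tau,t)$. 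Expanding $(1+\tau u_i)^t=\sum_k\binom{t}{k}\tau^k u_i^k$ in
\[
 \Phi_a(\tau,t)=\oint\cdots\oint\prod_i\frac{du_i}{2\pi i\, u_i^{a_i}}\,(1+\tau u_i)^t\prod_{j>i}(u_j-u_i)(1+\tau u_j+u_iu_j),
\]
and computing residues term by term produces a signed sum — over permutations and over the choices made in the quadratic factors $(1+\tau u_j+u_iu_j)$ — of products $\prod_i\binom{t}{\ast}\tau^{\ast}$. The first step is to organize this into a determinant (or Pfaffian) whose entries are of the form $\binom{t+\cdot}{\cdot}\,\tau^{\cdot}$, extending the computation in the second proof of Theorem~\ref{th:subleading} from the single coefficient of $t^{d(\pi)-1}$ to all coefficients; the leading coefficient in $t$ should come out as a positive monomial in $\tau$ (namely $\tau^{d(\pi)}\cdot d(\pi)!/H_\pi$), consistent with Theorem~\ref{zuber}.

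\textbf{Passing from $\Phi$ to $\Psi$, and the need for a positive basis.} By Proposition~\ref{prop:Bases} and \eqref{eq:psitauphitau} one has $\Psi_\pi(\tau,t)=\Phi_{a(\pi)}(\tau,t)+\sum_{\sigma<\pi}\C_{\pi,\sigma}(\tau)\,\Phi_\sigma(\tau,t)$, and the degree bound of Proposition~\ref{prop:Bases} forces $\C_{\pi,\sigma}(\tau)=0$ whenever $d(\sigma)=d(\pi)-1$, so the corrections affect only the coefficients of $t^{d(\pi)-2}$ and below. The difficulty is that the $\C_{\pi,\sigma}(\tau)$ are entries of an inverse matrix and are \emph{not} sign-definite, so this expansion alone cannot yield positivity: some cancellation has to be built in. Here I would try to replace the $\Phi$-basis by a genuinely positive combinatorial model, writing $P_\pi(\tau,t)$ as $\sum_{\text{objects}}\tau^{\,\cdot}\binom{t+c}{d}$ with nonnegative integer multiplicities and $c\ge d-1$ (so that each binomial has nonnegative coefficients in $t$) — which is the kind of object the qKZ/FPL correspondence naturally produces, cf. \eqref{eq:apiX}, where $A_\pi(t)=\sum_{\sigma\le\pi}a_\sigma^\pi S_\sigma(t-n+1)$ with $a_\sigma^\pi\ge 0$. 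The obstruction in \eqref{eq:apiX} is precisely the content shift $t-n+1$, which spoils the positivity of the individual $S_\sigma$; the task is therefore to re-sum the $S_\sigma$-terms, weighted by the CKLN-type template counts behind Theorem~\ref{zuber}, so as to absorb that shift, and to find the statistic on the templates that reproduces the $\tau$-grading.

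\textbf{A complementary route via conjugation symmetry.} In parallel I would exploit the symmetrization trick already used for Theorem~\ref{th:subleading}: the identity $\Psi_\pi(\tau,t)=\Psi_{\pi^*}(\tau,t)$ together with the content reversal $c(u)\mapsto-c(u)$ under conjugation lets one average two expressions for each coefficient, cancelling the genuinely signed ``$\sum_u c(u)$''-type pieces and leaving manifestly nonnegative remainders, exactly as \eqref{eq:FPL_sdt} and \eqref{eq:FPL_sdt2} were averaged into \eqref{eq:coeff_pos_expression}. For the coefficient of $t^{d(\pi)-k}$ this should produce, after antisymmetrization, a sum of $k\times k$ minors of hook-type matrices weighted by symmetric functions of the contents; I would then try, in the spirit of Lemma~\ref{lem:endproof} (repeated column/row proportionality after a suitable linear substitution), to show that only nonnegative combinations survive, the effect of $\tau$ being merely to multiply each surviving minor by a power of $\tau$ whose parity is pinned down by $\Psi_\pi(\tau,t)=(-1)^{d(\pi)}\Psi_\pi(-\tau,t)$.

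\textbf{Main obstacle.} I expect the crux to be exactly the point where signed terms must cancel once $k\ge 2$: neither the $\C_{\pi,\sigma}(\tau)$-expansion nor a naive averaging is by itself enough, because the relevant minors are no longer forced to vanish, and one really needs a combinatorial model for the coefficients — a set of FPL-like templates whose generating function in $p$ is a nonnegative sum of binomials $\binom{p+c}{d}$ with $c\ge d-1$, carrying a $\tau$-refining statistic — to make positivity transparent. Producing and verifying that model is the hard part; the determinantal reformulation of $\Phi_a(\tau,t)$ and the conjugation symmetry above are the tools I would use to guess its shape and then to check it.
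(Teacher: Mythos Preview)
The statement you are attempting to prove is a \emph{conjecture} in the paper, not a theorem: the authors present it as the $\tau$-analogue of Conjecture~\ref{conj:posX}, offer no proof, and support it only by exhaustive verification for $|\pi|\le 8$. There is therefore no ``paper's own proof'' to compare your proposal against.

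Your write-up is honest about this: it is a strategy sketch, not a proof, and you yourself flag the ``main obstacle'' at the end. The ingredients you list --- the determinantal rewriting of $\Phi_a(\tau,t)$, the change of basis via $C^{-1}_{\pi,a}(\tau)$, and the conjugation-averaging trick --- are exactly the tools the paper deploys for the \emph{subleading} coefficient in Section~\ref{sec:subleading}, and the paper already extracts from them everything they easily give (Theorem~\ref{th:subleading}). Beyond that coefficient, the paper has nothing, and neither do you: the passage ``the relevant minors are no longer forced to vanish, and one really needs a combinatorial model\ldots Producing and verifying that model is the hard part'' is precisely the gap. The sign-indefiniteness of $C^{-1}_{\pi,\sigma}(\tau)$ and the content shift in \eqref{eq:apiX} are genuine obstructions that the paper itself points to (see the discussion in Section~7.1), and your proposal does not overcome them --- it restates them.

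In short: the statement is open, the paper does not prove it, and your proposal is a reasonable but incomplete plan that stalls at the same place the existing techniques do.
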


\subsection{\texorpdfstring{The leading term of $\Psi_\pi(\tau,p)$}{The leading term in tau}}
 \label{sub:lead}

We now consider $\Psi_\pi(\tau,t)$ as a polynomial in $\tau$, first with coefficients in $\mathbb{C}[t]$, and then with rational coefficients under the specializations $t=p$ for $p$ an integer.

 We start by deriving an expression for the leading term in $\tau$ of the polynomial $\Psi_\pi(\tau,t)$. First we consider the leading term in $\tau$ of $\Phi_a(\tau,t)$ for a given sequence $a$. We have 
\[
 \Phi_a (\tau,t)= \oint\ldots\oint \prod_i \frac{du_i}{2\pi i u_i^{a_i}} (1+\tau u_i)^t \prod_{j>i}(u_j - u_i)(1+\tau u_j+u_i u_j),
\]
It is clear that if we replace $(1+ \tau u_i+u_i u_j)$ for $(1+\tau u_i)$ we don't change the leading term (for the same reasons as in Section~\ref{sub:subleadingO1}). Therefore this last expression has the same leading term in $\tau$ as
\begin{align*}
 & \oint \ldots \oint \prod_i \frac{du_i}{2\pi i u_i^{a_i}} (1+\tau u_i)^{t+i-1} \prod_{j>i}(u_j - u_i)\\
 = & \sum_{\sigma \in S_n} (-1)^\sigma \oint \ldots \oint \prod_i \frac{du_i}{2\pi i u_i^{a_i+1-\sigma_i}} (1+\tau u_i)^{t+i-1}\\
 = & \sum_{\sigma \in S_n} (-1)^\sigma \prod_i \tau^{a_i-\sigma_i} \binom{t+i-1}{a_i-\sigma_i}\\
 = & \tau^{d(a)} \det_{n\times n} \left| \binom{t+i-1}{a_i-j} \right |.
\end{align*}

So we know that the degree in $\tau$ of $\Phi_a (\tau,t)$ is $d(a)$. Because of Equation~\eqref{eq:psitauphitau} 
and Proposition~\ref{prop:Bases}, it is clear that the leading term of $\Psi_\pi (\tau,t)$ is the same as $\Phi_{a(\pi)} (\tau,t)$. We have thus proved:

\begin{prop}
As a polynomial in $\tau$, the leading term of $\Psi_\pi(\tau,t)$ is given by $D_\pi (t)\tau^{d(\pi)}$, where for $a=a(\pi)$ we have
\[
D_\pi (t)=\det_{n\times n} \left| \binom{t+i-1}{a_i-j} \right|.
\]
\end{prop}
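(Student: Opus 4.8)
The plan is to establish the result by a direct computation of the leading term in $\tau$, following exactly the pattern of the second proof of Theorem~\ref{th:subleading} in Section~\ref{sub:subleadingO1}, but keeping track of all powers of $\tau$ rather than just the highest power of $t$. First I would start from the integral formula for $\Phi_a(\tau,t)$ obtained in Section~\ref{sec:pol_qKZ},
\[
 \Phi_a (\tau,t)= \oint\ldots\oint \prod_i \frac{du_i}{2\pi i u_i^{a_i}} (1+\tau u_i)^t \prod_{j>i}(u_j - u_i)(1+\tau u_j+u_i u_j),
\]
and argue that replacing the factor $(1+\tau u_j + u_i u_j)$ by $(1+\tau u_j)$ does not affect the leading term in $\tau$: picking up the term $u_i u_j$ instead of one of $1,\tau u_j$ decreases $a_i$ and $a_j$ each by one, hence (as in Section~\ref{sub:subleadingO1}) decreases the total degree in $\tau$ by $2$, so such contributions are subleading. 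This reduces the computation to $\oint\ldots\oint \prod_i \frac{du_i}{2\pi i u_i^{a_i}} (1+\tau u_i)^{t+i-1}\prod_{j>i}(u_j-u_i)$.

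Next I would expand the Vandermonde $\prod_{j>i}(u_j-u_i) = \sum_{\sigma\in S_n}(-1)^\sigma \prod_i u_i^{\sigma_i - 1}$ (with a suitable convention so that the exponent of $u_i$ in the $\sigma$-term is $\sigma_i-1$), combine it with $u_i^{-a_i}$, and use the standard residue evaluation
\[
 \oint \frac{du}{2\pi i\, u^{m}} (1+\tau u)^{N} = \tau^{m-1}\binom{N}{m-1},
\]
to obtain $\Phi_a(\tau,t) = \sum_{\sigma}(-1)^\sigma \prod_i \tau^{a_i - \sigma_i}\binom{t+i-1}{a_i-\sigma_i} + (\text{lower order in }\tau)$. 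Since $\sum_i(a_i-\sigma_i) = \sum_i a_i - \sum_i i = d(a)$ is independent of $\sigma$, every term carries the same power $\tau^{d(a)}$, and the sum over $\sigma$ is precisely the determinant $\det_{n\times n}\bigl|\binom{t+i-1}{a_i-j}\bigr|$; hence the leading term of $\Phi_a(\tau,t)$ is $D_a(t)\tau^{d(a)}$ where $D_a(t)$ is this determinant. (I should also note why this determinant is not identically zero, or rather simply observe that its $\tau$-coefficient is what it is; the degree claim ``degree in $\tau$ equals $d(a)$'' would follow once we know the $\tau$-leading coefficient of $\Psi_\pi$ is nonzero, which is handled in the last step.)

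Finally I would pass from $\Phi$ to $\Psi$ using Equation~\eqref{eq:psitauphitau}, $\Psi_\pi(\tau,t) = \sum_a \C_{\pi,a}(\tau)\Phi_a(\tau,t)$, together with Proposition~\ref{prop:Bases} applied to the inverse matrix: $\C_{\pi,a}(\tau)$ vanishes unless $\pi\le a$, equals $1$ when $a=a(\pi)$, and otherwise is a polynomial in $\tau$ of degree $\le d(a)-d(\pi)-2$. Therefore for $a\neq a(\pi)$ the contribution $\C_{\pi,a}(\tau)\Phi_a(\tau,t)$ has $\tau$-degree at most $(d(a)-d(\pi)-2)+d(a)$... more carefully, I want to compare with the $a=a(\pi)$ term, whose $\tau$-degree is $d(\pi)$; for the other terms one has $\Phi_a$ of $\tau$-degree $d(a) > d(\pi)$, but $\C_{\pi,a}$ of degree $\le d(a)-d(\pi)-2$, and the relevant cancellation is exactly the one already used in Section~\ref{sub:subleadingO1} — the combination is engineered so that $\Psi_\pi$ has $\tau$-degree $d(\pi)$. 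The cleanest route is to invoke the already-established parity/degree bookkeeping: $\Psi_\pi(\tau,t)$, being a sum $\sum_a\C_{\pi,a}(\tau)\Phi_a(\tau,t)$ with $\C_{\pi,\pi}=1$ and the remaining terms of strictly lower $\tau$-degree once the cancellation in the $U_s$ factors is taken into account (the parity statement~\eqref{eq:capi-tau} shows $\deg_\tau \C_{\pi,a} = d(a)-d(\pi)-2k$), has leading $\tau$-term equal to that of $\Phi_{a(\pi)}(\tau,t)$, namely $D_\pi(t)\tau^{d(\pi)}$. The main obstacle is precisely this last step: making rigorous that the off-diagonal terms $\C_{\pi,a}(\tau)\Phi_a(\tau,t)$ cannot produce a $\tau^{d(\pi)+1}$ (or higher) contribution that survives. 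One must combine the degree bound $\deg_\tau\C_{\pi,a}\le d(a)-d(\pi)-2$ with the fact that $\deg_\tau\Phi_a = d(a)$, giving total degree $\le 2d(a)-d(\pi)-2$, which is $\ge d(\pi)$ when $d(a)\ge d(\pi)+1$ — so a naive degree count is not enough, and one genuinely needs the cancellation mechanism (as in the second proof of Theorem~\ref{th:subleading}, where exactly the same phenomenon forced the use of $C_{\pi,\sigma}$ having degree $\le d(\pi)-d(\sigma)-2$ to kill the $t^{d(\pi)-1}$ contributions). I expect that, as there, the honest argument is that $\Psi_\pi - \Phi_{a(\pi)} = \sum_{a\neq a(\pi)}\C_{\pi,a}\Phi_a = -\sum_{\sigma<\pi}C_{\pi,\sigma}\Psi_\sigma$ (rewriting via~\eqref{eq:psiphi1}), and each $C_{\pi,\sigma}\Psi_\sigma$ has $\tau$-degree $\le (d(\pi)-d(\sigma)-2) + d(\sigma) = d(\pi)-2 < d(\pi)$, which does the job cleanly.
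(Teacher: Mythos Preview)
Your computation of the leading $\tau$-term of $\Phi_a(\tau,t)$ is identical to the paper's, and correct. The issue is in your final step, where you have the triangularity of the inverse matrix backwards. The paper records (in the remark following Lemma~\ref{lem:sta}) that Proposition~\ref{prop:Bases} holds verbatim for $\C$: with the row index written first, $\C_{\pi,a}(\tau)=0$ unless $a\le\pi$, equals $1$ when $a=\pi$, and has $\tau$-degree at most $d(\pi)-d(a)-2$ when $a<\pi$. Thus in $\Psi_\pi(\tau,t)=\sum_a\C_{\pi,a}(\tau)\Phi_a(\tau,t)$ the sum ranges over $a\le\pi$, and every off-diagonal term has $\tau$-degree at most $(d(\pi)-d(a)-2)+d(a)=d(\pi)-2$. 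So the ``naive degree count'' you feared was insufficient is in fact exactly what the paper uses, and it works immediately.

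Your detour through Equation~\eqref{eq:psiphi1}, writing $\Psi_\pi-\Phi_{a(\pi)}=-\sum_{\sigma<\pi}C_{\pi,\sigma}\Psi_\sigma$, is also valid, but note that it is implicitly inductive: to bound $\deg_\tau\Psi_\sigma\le d(\sigma)$ for $\sigma<\pi$ you need the very statement you are proving, applied to smaller matchings. This is fine once made explicit, but unnecessary given the direct argument above.
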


Now we turn to what happens when $t$ is specialized to an integer $p$; by definition the cases $p=0$ and $p=-|\pi|$ correspond respectively to the polynomials $\Psi_\pi(\tau)$ and $G_\pi(\tau)$. Clearly if $D_\pi (p)\neq 0$ then the leading term  of $\Psi_\pi(\tau,p)$ is $D_\pi (p)\tau^{d(\pi)}$ by the previous proposition, while if $D_\pi (p)=0$ the leading term is necessarily of smaller degree. Our result is the following:

\begin{thm}
\label{thm:taucoefdom}
Let $\pi$ be a matching, and $p$ be an integer; if $p<0$, we also assume that $\pi$ is not of the form $(\rho)_{|p|}$. Then $D_\pi (p)= 0$ if and only if $1-|\pi|\leq p\leq-1$. Furthermore, \begin{itemize} 
\item if $p\geq 0$ then $D_\pi (p)$ counts the number of tableaux of shape $Y(\pi)$ with entries bounded by $p+|\pi|-1$ which are strictly increasing in rows and columns; \item if $p\leq -|\pi|$, then $(-1)^{d(\pi)}D_\pi(p)$ counts the number of tableaux of shape $Y(\pi)$ with entries bounded by $|p|-|\pi|$ which are weakly increasing in rows and columns;
\item if $1-|\pi|\leq p\leq-1$, then\begin{itemize}
\item if $m_{|p|}(\pi)\neq 0$, Conjecture~\ref{conj:taurealroot} implies that $\Psi_\pi(\tau,p)$ is the zero polynomial;
\item if $m_{|p|}(\pi)= 0$ and $\pi=\alpha \circ \beta$ with $|\alpha|=|p|$, Conjecture~\ref{conj:taudec} implies that the leading term of $\Psi_\pi(\tau,p)$ is given by $(-1)^{d(\alpha)}D_\beta(0)\tau^{d(\alpha)+d(\beta)}$.
\end{itemize}
\end{itemize}
\end{thm}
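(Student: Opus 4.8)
The plan is to reduce the whole statement to an analysis of the single polynomial $D_\pi(t)=\det_{1\le i,j\le n}\bigl(\binom{t+i-1}{a_i-j}\bigr)$, which by the preceding Proposition is the coefficient of $\tau^{d(\pi)}$ in $\Psi_\pi(\tau,t)$. Write $\lambda=Y(\pi)$, extended by zero parts to exactly $n$ rows, so that $a_i=\lambda_{n+1-i}+i$. Reversing the orders of the rows and of the columns of the matrix (the two sign changes cancel) turns $D_\pi(t)$ into $\det_{1\le i,j\le n}\bigl(\binom{t+n-i}{\lambda_i-i+j}\bigr)=\det_{1\le i,j\le n}\bigl(e_{\lambda_i-i+j}(1^{t+n-i})\bigr)$. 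The key step is the polynomial identity $D_\pi(t)=\bar{\Omega}_{Y(\pi)}(t+n-1)$, where $\bar{\Omega}_{Y(\pi)}(N)$ is the strict order polynomial of the poset of boxes of $Y(\pi)$ (ordered componentwise), i.e.\ the number of fillings of $Y(\pi)$ with entries in $\{1,\dots,N\}$ that are strictly increasing along rows and columns. I would prove this first for integers $t=p\ge 0$: by a flagged version of the dual Jacobi--Trudi identity (equivalently, by the Lindström--Gessel--Viennot lemma applied to a suitable family of $n$ nonintersecting lattice paths whose path matrix is exactly the one above), the determinant equals the number of row-strict tableaux of shape $Y(\pi)$ with the entries of row $i$ bounded by $p+n-i$; the staircase shift $T(i,j)\mapsto T(i,j)+i-1$ then turns these bijectively into fillings that are strictly increasing in rows and columns with entries in $\{1,\dots,p+n-1\}$. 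This also gives $D_\pi(p)>0$ for every $p\ge 0$, since the minimal filling $T(i,j)=i+j-1$ is always admissible: the Young diagram of a matching with $n$ arches has at most $n-i$ boxes in its $i$-th row, so $i+j-1\le n-1\le p+n-1$. As $D_\pi(t)$ and $\bar{\Omega}_{Y(\pi)}(t+n-1)$ are polynomials of degree $d(\pi)$ agreeing on all nonnegative integers, the identity holds as polynomials, which settles the $p\ge0$ case.

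To handle negative arguments I would use the polynomial identity $\binom{x}{k}=(-1)^k\binom{k-1-x}{k}$, which at $t=-m$ rewrites $D_\pi(-m)$ as $(-1)^{d(\pi)}\det_{1\le i,j\le n}\bigl(\binom{m+a_i-i-j}{a_i-j}\bigr)$; the global sign is $(-1)^{d(\pi)}$ because $\sum_i a_i=\tfrac{n(n+1)}{2}+d(\pi)$, the leading-order bookkeeping being exactly the hook formula $1/H_{Y(\pi)}=\det(1/(a_i-j)!)$ of~\eqref{eq:hookdet}. This is precisely order-polynomial reciprocity $\bar{\Omega}_{Y(\pi)}(-N)=(-1)^{d(\pi)}\Omega_{Y(\pi)}(N)$, with $\Omega_{Y(\pi)}$ counting \emph{weakly} increasing fillings. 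For $p\le-|\pi|$, i.e.\ $m\ge n$, the flags $m-n+i$ are positive and the same flagged-Jacobi--Trudi-plus-shift argument identifies $(-1)^{d(\pi)}D_\pi(p)$ with $\Omega_{Y(\pi)}(m-n+1)$, the number of weakly increasing fillings of $Y(\pi)$ with bounded entries as in the statement; in particular this is strictly positive (the all-ones filling is admissible). Together with the previous paragraph this shows that $D_\pi(p)=0$ forces $1-|\pi|\le p\le-1$.

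It remains to examine the range $p=-m$ with $1\le m\le|\pi|-1$. Here the polynomial identity gives $D_\pi(-m)=\bar{\Omega}_{Y(\pi)}(n-1-m)$ with the argument $n-1-m\ge0$, so this is a genuine count of strictly increasing fillings, and it vanishes exactly when some box $(i,j)$ of $Y(\pi)$ has $i+j-1>n-1-m$, that is, exactly when $Y(\pi)$ is not contained in the staircase $\delta=(n-m-1,n-m-2,\dots,1)$. The remaining combinatorial input is the lemma: $Y(\pi)\subseteq\delta$ if and only if $\pi=(\rho)_m$ for some (necessarily unique) matching $\rho$ with $n-m$ arches. The ``if'' direction is clear, since nesting $m$ arches leaves the Young diagram unchanged (only embedding it in the larger staircase); for the converse one reads off from $\lambda_i\le n-m-i$ that $a(\pi)_k=k$ for $k\le m$ and that $a(\pi)$ equals the sequence of $(\rho)_m$, where $\rho$ is the matching whose Young diagram is $Y(\pi)$. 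This completes the equivalence $D_\pi(p)=0\iff1-|\pi|\le p\le-1$ for $\pi$ not of the form $(\rho)_{|p|}$. The same lemma gives, when $m_{|p|}(\pi)=0$ so that $\pi=\alpha\circ\beta$ with $|\alpha|=|p|$, that $Y(\alpha)$ lies in the corresponding staircase, hence $D_\alpha(-|\alpha|)=\bar{\Omega}_{Y(\alpha)}(-1)=(-1)^{d(\alpha)}\Omega_{Y(\alpha)}(1)=(-1)^{d(\alpha)}$.

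Finally, the three sub-bullets for $1-|\pi|\le p\le-1$ are formal: if $m_{|p|}(\pi)\ne0$ then Conjecture~\ref{conj:taurealroot} gives $(t+|p|)\mid\Psi_\pi(\tau,t)$, so $\Psi_\pi(\tau,p)$ is the zero polynomial; if $m_{|p|}(\pi)=0$ and $\pi=\alpha\circ\beta$, then Conjecture~\ref{conj:taudec} gives $\Psi_\pi(\tau,p)=G_\alpha(\tau)\,\Psi_\beta(\tau)$, and since by the leading-term Proposition $\Psi_\beta(\tau)=\Psi_\beta(\tau,0)$ has leading term $D_\beta(0)\tau^{d(\beta)}$ while $G_\alpha(\tau)=\Psi_\alpha(\tau,-|\alpha|)$ has leading term $D_\alpha(-|\alpha|)\tau^{d(\alpha)}=(-1)^{d(\alpha)}\tau^{d(\alpha)}$ by the computation just made, multiplication yields the asserted leading term $(-1)^{d(\alpha)}D_\beta(0)\tau^{d(\alpha)+d(\beta)}$. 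The main obstacle is the first step: establishing $D_\pi(t)=\bar{\Omega}_{Y(\pi)}(t+n-1)$ with the correct normalization, i.e.\ getting the row/column reversals, the choice of flagged Jacobi--Trudi (or LGV path model) and the staircase-shift bijection exactly right; once this is in place, order-polynomial reciprocity and the short lemma about nested arches finish everything, the negative-argument bullets being immediate consequences of Conjectures~\ref{conj:taurealroot} and~\ref{conj:taudec}.
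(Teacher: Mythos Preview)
Your argument is correct and in fact more conceptually unified than the paper's. Both proofs identify $D_\pi(p)$ with a tableau count for $p\ge 0$ (the paper by citing Krattenthaler's determinant lemma, you by LGV/flagged dual Jacobi--Trudi plus the staircase shift), and both obtain the $p\le -|\pi|$ case from the negation $\binom{N}{k}=(-1)^k\binom{k-1-N}{k}$; you phrase this as order-polynomial reciprocity, which is exactly the right language. The genuine difference is in the middle range $1-|\pi|\le p\le -1$: the paper argues by direct matrix analysis, showing that for $p=-m$ the matrix $\bigl(\binom{p+i-1}{a_i-j}\bigr)$ acquires a block-rectangular zero pattern so that the determinant vanishes unless the blocks are square, which forces $a_{m+1}=m+1$ and hence $\pi=(\rho)_m$. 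You instead keep the single polynomial identity $D_\pi(t)=\bar\Omega_{Y(\pi)}(t+n-1)$ and note that at $t=-m$ the argument $n-1-m$ is still a nonnegative integer, so the value is an honest count of strictly increasing fillings; this count vanishes iff the minimal filling $T(i,j)=i+j-1$ overflows, i.e.\ iff $Y(\pi)$ is not contained in the staircase $(n-m-1,\dots,1)$, and you finish with the clean lemma that this containment is equivalent to $\pi=(\rho)_m$. Your route avoids the somewhat ad hoc block inspection and makes the role of the staircase and of order-polynomial reciprocity transparent; the paper's route has the virtue of being entirely elementary linear algebra once the tableau interpretation is in hand. The conjectural bullets are handled identically in both.
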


Note that the condition that $\pi$ is not of the form $(\rho)_{|p|}$ is not a restriction, since in such a case $\Psi_\pi(\tau,p)=\Psi_\rho(\tau,0)$.

\begin{proof}
 We study separately the three cases:
\medskip

 \noindent\emph{Case $p\geq 0$.} The determinant $D_\pi (p)$ is here a particular case of ~\cite[Theorem 6.1]{KrattGenPP}, which says that indeed $D_\pi (p)$ counts tableaux of shape $Y(\pi)$ with entries bounded by $(p+|\pi|-1)$ and increasing in both directions.  For example, if $a(\pi)=\{1,2,4,7\}$ and $p=1$ we get
\[
 D_{\{1,2,4,7\}}(1)=\det_{4\times 4} \left| \binom{i}{a_i-j} \right| = 11,
\]

corresponding to the $11$ tableaux:

\begin{align*}
\begin{tikzpicture}[scale=.25,baseline=0pt]
 \draw[black, thick] (0,0) -- (3,0);
 \draw[black, thick] (0,-1) -- (3,-1);
 \draw[black, thick] (0,-2) -- (1,-2);
 \draw[black, thick] (0,0) -- (0,-2);
 \draw[black, thick] (1,0) -- (1,-2);
 \draw[black, thick] (2,0) -- (2,-1);
 \draw[black, thick] (3,0) -- (3,-1);
 \draw [red] (0.5,-.5) node {\tiny 2};
 \draw [red] (1.5,-.5) node {\tiny 3};
 \draw [red] (2.5,-.5) node {\tiny 4};
 \draw [red] (0.5,-1.5) node {\tiny 4};
\end{tikzpicture}
&&
\begin{tikzpicture}[scale=.25,baseline=0pt]
 \draw[black, thick] (0,0) -- (3,0);
 \draw[black, thick] (0,-1) -- (3,-1);
 \draw[black, thick] (0,-2) -- (1,-2);
 \draw[black, thick] (0,0) -- (0,-2);
 \draw[black, thick] (1,0) -- (1,-2);
 \draw[black, thick] (2,0) -- (2,-1);
 \draw[black, thick] (3,0) -- (3,-1);
 \draw [red] (0.5,-.5) node {\tiny 2};
 \draw [red] (1.5,-.5) node {\tiny 3};
 \draw [red] (2.5,-.5) node {\tiny 4};
 \draw [red] (0.5,-1.5) node {\tiny 3};
\end{tikzpicture}
&&
\begin{tikzpicture}[scale=.25,baseline=0pt]
 \draw[black, thick] (0,0) -- (3,0);
 \draw[black, thick] (0,-1) -- (3,-1);
 \draw[black, thick] (0,-2) -- (1,-2);
 \draw[black, thick] (0,0) -- (0,-2);
 \draw[black, thick] (1,0) -- (1,-2);
 \draw[black, thick] (2,0) -- (2,-1);
 \draw[black, thick] (3,0) -- (3,-1);
 \draw [red] (0.5,-.5) node {\tiny 1};
 \draw [red] (1.5,-.5) node {\tiny 3};
 \draw [red] (2.5,-.5) node {\tiny 4};
 \draw [red] (0.5,-1.5) node {\tiny 4};
\end{tikzpicture}
&&
\begin{tikzpicture}[scale=.25,baseline=0pt]
 \draw[black, thick] (0,0) -- (3,0);
 \draw[black, thick] (0,-1) -- (3,-1);
 \draw[black, thick] (0,-2) -- (1,-2);
 \draw[black, thick] (0,0) -- (0,-2);
 \draw[black, thick] (1,0) -- (1,-2);
 \draw[black, thick] (2,0) -- (2,-1);
 \draw[black, thick] (3,0) -- (3,-1);
 \draw [red] (0.5,-.5) node {\tiny 1};
 \draw [red] (1.5,-.5) node {\tiny 3};
 \draw [red] (2.5,-.5) node {\tiny 4};
 \draw [red] (0.5,-1.5) node {\tiny 3};
\end{tikzpicture}
&&
\begin{tikzpicture}[scale=.25,baseline=0pt]
 \draw[black, thick] (0,0) -- (3,0);
 \draw[black, thick] (0,-1) -- (3,-1);
 \draw[black, thick] (0,-2) -- (1,-2);
 \draw[black, thick] (0,0) -- (0,-2);
 \draw[black, thick] (1,0) -- (1,-2);
 \draw[black, thick] (2,0) -- (2,-1);
 \draw[black, thick] (3,0) -- (3,-1);
 \draw [red] (0.5,-.5) node {\tiny 1};
 \draw [red] (1.5,-.5) node {\tiny 3};
 \draw [red] (2.5,-.5) node {\tiny 4};
 \draw [red] (0.5,-1.5) node {\tiny 2};
\end{tikzpicture}
&&
\begin{tikzpicture}[scale=.25,baseline=0pt]
 \draw[black, thick] (0,0) -- (3,0);
 \draw[black, thick] (0,-1) -- (3,-1);
 \draw[black, thick] (0,-2) -- (1,-2);
 \draw[black, thick] (0,0) -- (0,-2);
 \draw[black, thick] (1,0) -- (1,-2);
 \draw[black, thick] (2,0) -- (2,-1);
 \draw[black, thick] (3,0) -- (3,-1);
 \draw [red] (0.5,-.5) node {\tiny 1};
 \draw [red] (1.5,-.5) node {\tiny 2};
 \draw [red] (2.5,-.5) node {\tiny 4};
 \draw [red] (0.5,-1.5) node {\tiny 4};
\end{tikzpicture}
\\
\begin{tikzpicture}[scale=.25,baseline=0pt]
 \draw[black, thick] (0,0) -- (3,0);
 \draw[black, thick] (0,-1) -- (3,-1);
 \draw[black, thick] (0,-2) -- (1,-2);
 \draw[black, thick] (0,0) -- (0,-2);
 \draw[black, thick] (1,0) -- (1,-2);
 \draw[black, thick] (2,0) -- (2,-1);
 \draw[black, thick] (3,0) -- (3,-1);
 \draw [red] (0.5,-.5) node {\tiny 1};
 \draw [red] (1.5,-.5) node {\tiny 2};
 \draw [red] (2.5,-.5) node {\tiny 4};
 \draw [red] (0.5,-1.5) node {\tiny 3};
\end{tikzpicture}
&&
\begin{tikzpicture}[scale=.25,baseline=0pt]
 \draw[black, thick] (0,0) -- (3,0);
 \draw[black, thick] (0,-1) -- (3,-1);
 \draw[black, thick] (0,-2) -- (1,-2);
 \draw[black, thick] (0,0) -- (0,-2);
 \draw[black, thick] (1,0) -- (1,-2);
 \draw[black, thick] (2,0) -- (2,-1);
 \draw[black, thick] (3,0) -- (3,-1);
 \draw [red] (0.5,-.5) node {\tiny 1};
 \draw [red] (1.5,-.5) node {\tiny 2};
 \draw [red] (2.5,-.5) node {\tiny 4};
 \draw [red] (0.5,-1.5) node {\tiny 2};
\end{tikzpicture}
&&
\begin{tikzpicture}[scale=.25,baseline=0pt]
 \draw[black, thick] (0,0) -- (3,0);
 \draw[black, thick] (0,-1) -- (3,-1);
 \draw[black, thick] (0,-2) -- (1,-2);
 \draw[black, thick] (0,0) -- (0,-2);
 \draw[black, thick] (1,0) -- (1,-2);
 \draw[black, thick] (2,0) -- (2,-1);
 \draw[black, thick] (3,0) -- (3,-1);
 \draw [red] (0.5,-.5) node {\tiny 1};
 \draw [red] (1.5,-.5) node {\tiny 2};
 \draw [red] (2.5,-.5) node {\tiny 3};
 \draw [red] (0.5,-1.5) node {\tiny 4};
\end{tikzpicture}
&&
\begin{tikzpicture}[scale=.25,baseline=0pt]
 \draw[black, thick] (0,0) -- (3,0);
 \draw[black, thick] (0,-1) -- (3,-1);
 \draw[black, thick] (0,-2) -- (1,-2);
 \draw[black, thick] (0,0) -- (0,-2);
 \draw[black, thick] (1,0) -- (1,-2);
 \draw[black, thick] (2,0) -- (2,-1);
 \draw[black, thick] (3,0) -- (3,-1);
 \draw [red] (0.5,-.5) node {\tiny 1};
 \draw [red] (1.5,-.5) node {\tiny 2};
 \draw [red] (2.5,-.5) node {\tiny 3};
 \draw [red] (0.5,-1.5) node {\tiny 3};
\end{tikzpicture}
&&
\begin{tikzpicture}[scale=.25,baseline=0pt]
 \draw[black, thick] (0,0) -- (3,0);
 \draw[black, thick] (0,-1) -- (3,-1);
 \draw[black, thick] (0,-2) -- (1,-2);
 \draw[black, thick] (0,0) -- (0,-2);
 \draw[black, thick] (1,0) -- (1,-2);
 \draw[black, thick] (2,0) -- (2,-1);
 \draw[black, thick] (3,0) -- (3,-1);
 \draw [red] (0.5,-.5) node {\tiny 1};
 \draw [red] (1.5,-.5) node {\tiny 2};
 \draw [red] (2.5,-.5) node {\tiny 3};
 \draw [red] (0.5,-1.5) node {\tiny 2};
\end{tikzpicture}
\end{align*}

Note also that the filling of the shape $Y(\pi)$ where the cell $(x,y)$ is labeled by $x+y-1$ is a valid tableau because $x+y\leq n$ holds for every cell, and therefore $D_\pi(p)> 0$ for $p\geq 0$.

\medskip

\noindent\emph{Case $p\leq -|\pi|$.} We use first the transformation $\binom{N}{k}=(-1)^k\binom{N+k-1}{k}$ for each coefficient in $D_\pi(p)$ to get:
\[
 D_{\pi}(p) = (-1)^{d(\pi)} \det_{n \times n} \left| \binom{|p|+a_i-i-j}{a_i-j} \right|;
\]

Here the sign comes from $(-1)^{a_i-j}=(-1)^{a_i}(-1)^{-j}$ for the coefficient $(i,j)$, with gives the global sign  $(-1)^{\sum_ia_i-\sum_jj}=(-1)^{d(\pi)}$. We can then use ~\cite[Theorem 6.1]{KrattGenPP} in this case also, which gives us that 
$ (-1)^{d(\pi)} D_{\pi}(p)$ counts tableaux of shape $Y(\pi)$ with entries between $0$ and $|p|-|\pi|$ which are weakly increasing in both directions. For the same partition $a(\pi)=\{1,2,4,7\}$ and $p=-5$ we get
\[
 |D_{\pi} (-5)| = \det_{4 \times 4} \left| \binom{5+a_i-i-j}{5-i}\right| = 7,
\]

which corresponds to the $7$ tableaux
\begin{align*}
\begin{tikzpicture}[scale=.25,baseline=0pt]
 \draw[black, thick] (0,0) -- (3,0);
 \draw[black, thick] (0,-1) -- (3,-1);
 \draw[black, thick] (0,-2) -- (1,-2);
 \draw[black, thick] (0,0) -- (0,-2);
 \draw[black, thick] (1,0) -- (1,-2);
 \draw[black, thick] (2,0) -- (2,-1);
 \draw[black, thick] (3,0) -- (3,-1);
 \draw [red] (0.5,-.5) node {\tiny 0};
 \draw [red] (1.5,-.5) node {\tiny 0};
 \draw [red] (2.5,-.5) node {\tiny 0};
 \draw [red] (0.5,-1.5) node {\tiny 0};
\end{tikzpicture}
&&
\begin{tikzpicture}[scale=.25,baseline=0pt]
 \draw[black, thick] (0,0) -- (3,0);
 \draw[black, thick] (0,-1) -- (3,-1);
 \draw[black, thick] (0,-2) -- (1,-2);
 \draw[black, thick] (0,0) -- (0,-2);
 \draw[black, thick] (1,0) -- (1,-2);
 \draw[black, thick] (2,0) -- (2,-1);
 \draw[black, thick] (3,0) -- (3,-1);
 \draw [red] (0.5,-.5) node {\tiny 0};
 \draw [red] (1.5,-.5) node {\tiny 0};
 \draw [red] (2.5,-.5) node {\tiny 1};
 \draw [red] (0.5,-1.5) node {\tiny 0};
\end{tikzpicture}
&&
\begin{tikzpicture}[scale=.25,baseline=0pt]
 \draw[black, thick] (0,0) -- (3,0);
 \draw[black, thick] (0,-1) -- (3,-1);
 \draw[black, thick] (0,-2) -- (1,-2);
 \draw[black, thick] (0,0) -- (0,-2);
 \draw[black, thick] (1,0) -- (1,-2);
 \draw[black, thick] (2,0) -- (2,-1);
 \draw[black, thick] (3,0) -- (3,-1);
 \draw [red] (0.5,-.5) node {\tiny 0};
 \draw [red] (1.5,-.5) node {\tiny 1};
 \draw [red] (2.5,-.5) node {\tiny 1};
 \draw [red] (0.5,-1.5) node {\tiny 0};
\end{tikzpicture}
&&
\begin{tikzpicture}[scale=.25,baseline=0pt]
 \draw[black, thick] (0,0) -- (3,0);
 \draw[black, thick] (0,-1) -- (3,-1);
 \draw[black, thick] (0,-2) -- (1,-2);
 \draw[black, thick] (0,0) -- (0,-2);
 \draw[black, thick] (1,0) -- (1,-2);
 \draw[black, thick] (2,0) -- (2,-1);
 \draw[black, thick] (3,0) -- (3,-1);
 \draw [red] (0.5,-.5) node {\tiny 0};
 \draw [red] (1.5,-.5) node {\tiny 0};
 \draw [red] (2.5,-.5) node {\tiny 0};
 \draw [red] (0.5,-1.5) node {\tiny 1};
\end{tikzpicture}
&&
\begin{tikzpicture}[scale=.25,baseline=0pt]
 \draw[black, thick] (0,0) -- (3,0);
 \draw[black, thick] (0,-1) -- (3,-1);
 \draw[black, thick] (0,-2) -- (1,-2);
 \draw[black, thick] (0,0) -- (0,-2);
 \draw[black, thick] (1,0) -- (1,-2);
 \draw[black, thick] (2,0) -- (2,-1);
 \draw[black, thick] (3,0) -- (3,-1);
 \draw [red] (0.5,-.5) node {\tiny 0};
 \draw [red] (1.5,-.5) node {\tiny 0};
 \draw [red] (2.5,-.5) node {\tiny 1};
 \draw [red] (0.5,-1.5) node {\tiny 1};
\end{tikzpicture}
&&
\begin{tikzpicture}[scale=.25,baseline=0pt]
 \draw[black, thick] (0,0) -- (3,0);
 \draw[black, thick] (0,-1) -- (3,-1);
 \draw[black, thick] (0,-2) -- (1,-2);
 \draw[black, thick] (0,0) -- (0,-2);
 \draw[black, thick] (1,0) -- (1,-2);
 \draw[black, thick] (2,0) -- (2,-1);
 \draw[black, thick] (3,0) -- (3,-1);
 \draw [red] (0.5,-.5) node {\tiny 0};
 \draw [red] (1.5,-.5) node {\tiny 1};
 \draw [red] (2.5,-.5) node {\tiny 1};
 \draw [red] (0.5,-1.5) node {\tiny 1};
\end{tikzpicture}
&&
\begin{tikzpicture}[scale=.25,baseline=0pt]
 \draw[black, thick] (0,0) -- (3,0);
 \draw[black, thick] (0,-1) -- (3,-1);
 \draw[black, thick] (0,-2) -- (1,-2);
 \draw[black, thick] (0,0) -- (0,-2);
 \draw[black, thick] (1,0) -- (1,-2);
 \draw[black, thick] (2,0) -- (2,-1);
 \draw[black, thick] (3,0) -- (3,-1);
 \draw [red] (0.5,-.5) node {\tiny 1};
 \draw [red] (1.5,-.5) node {\tiny 1};
 \draw [red] (2.5,-.5) node {\tiny 1};
 \draw [red] (0.5,-1.5) node {\tiny 1};
\end{tikzpicture}
\end{align*}
\medskip

Now here also $D_\pi(p)\neq 0$ because the tableau filled zeros is valid. For $p=-|\pi|$, this is the only possible tableau and thus the leading coefficient of $G_\pi(\tau)$ is given by $D_\pi(-|\pi|)=(-1)^{d(\pi)}$.
\medskip

\noindent\emph{Case $-|\pi|<p<0$.} We first want to prove that $D_{\pi}(p)=0$ if $\pi$ is not of the form $(\rho)_{|p|}$. We easily check that $\binom{p+i-1}{a_i-j}$ is zero unless either $(i,j)<(|p|+1,a_{|p|+1})$ or $(i,j)\geq (|p|+1,a_{|p|+1})$. Therefore we get a matrix which splits into two rectangular submatrices; the determinant is zero unless these submatrices are square, which means that $|p|+1=a_{p+1}$, and then

\begin{align*}
 D_{\pi}(p) =&
 \det_{|p|\times |p|} \left| \binom{p+i-1}{i-j} \right| \times \det_{(|\pi|-|p|)\times (|\pi|-|p|)} \left| \binom{i-1}{\hat{a}_i-j} \right|\\
 =& D_{\{1,\ldots,-p\}}(p) \times D_{\hat{a}} (0),
\end{align*}
where $\hat{a}_i=a_{r+i}-r$. The first factor is $1$, and the second is non-zero if and only if $\hat{a}$ corresponds to a matching; this is excluded because $\pi$ would be of the form $(\rho)_{|p|}$, which is excluded. Therefore  $D_{\pi}(p)=0$ as wanted.

Now Conjecture~\ref{conj:taurealroot} immediately implies that if $m_{|p|}(\pi)\neq 0$, then $t=p$ is a root of $\Psi_\pi(\tau,t)$, so that $\Psi_\pi(\tau,p)\equiv 0$. If $m_{|p|}(\pi)= 0$, then by Conjecture~\ref{conj:taudec}, the leading term of $\Psi_\pi(\tau,p)$ is equal to the product of the leading terms of $G_\alpha(\tau)$ and $\Psi_\beta(\tau)$. The first one is given by $(-1)^{d(\alpha)}\tau^{d(\alpha)}$ as proved above, while the leading term of $\Psi _\beta(\tau)=\Psi _\beta(\tau,0)$ is given by $D_\beta(0)\tau^{d(\beta)}$, which achieves the proof. 
\end{proof}


\section{Further questions}

\subsection{Solving the conjectures}
Since our paper is centered around conjectures, the most immediate problem is to solve them. We listed four conjectures in Section~\ref{sec:conj} which concern roots, specializations and coefficients of the polynomials $A_\pi(t)$. The difficulty here is that existing expressions for the polynomials $A_\pi(t)$, namely~\eqref{eq:apiX} and~\eqref{eq:psipiX}, consist of certain sums of polynomials, so that it makes it uneasy to find real roots of $A_\pi(t)$, and more generally the sign variations when $t$ is a real variable. For the same reasons, it is hard to figure out where the factorization from Conjecture~\ref{conj:dec} comes from. Furthermore, both expressions~\eqref{eq:apiX} and~\eqref{eq:psipiX} involve negative signs, so that the positivity of coefficients is not at all obvious. One way to attack the conjectures would be then to find new expressions for the polynomials; this could be done by either understanding better the quantities involved in~\eqref{eq:apiX} and~\eqref{eq:psipiX}, or coming up with a new decomposition of the FPLs counted by $A_{(\pi)_p}$ for instance.

Note also that the linear relations from Definition~\ref{defi:psipi}, which determine the $A_\pi$ by the Razumov Stroganov correspondence~\ref{conj:rs}, do not seem to be helpful in the case of nested arches. Indeed given a matching $(\pi)_p$, then the linear relation corresponding to $A_{(\pi)_p}$ involves not only quantities of the form $A_{(\pi')_p}$ or $A_{(\pi')_{p-1}}$, but also $A_{()(\pi)_{p-2}()}=A_{()()(\pi)_{p-2}}$, which is not of the form considered in this work. For two matchings $\pi,\pi'$, the quantities $A_{\pi'(\pi)_p}$ are polynomials in $p$ when $p$ is big enough (cf.~\cite[Theorem 6.7]{CKLN}), and these ones are ``stable'' with respect to these Razumov--Stroganov linear relations: it would be very interesting to study these more general polynomials and find out how our conjectures can be extended.

Another angle to attack some of the conjectures (namely Conj.~\ref{conj:realroots},~\ref{conj:dec} and their $\tau$ counterparts~\ref{conj:taurealroot},~\ref{conj:dec_tau}) would be to extend the approach used in the proof of Theorem~\ref{th:firstroot}: one first needs to extend the multivariate integral definition~\eqref{eq:minusone} to any integer $p$, which can easily be done. The problem is that the expressions obtained are fairly more complicated and intricate than in the case $p=-1$. This is work in progress.

\subsection{Combinatorial reciprocity}
The idea underlying our conjectures (Conjecture~\ref{conj:posX} excepted) is that there should be a ``combinatorial reciprocity theorem'' (\cite{StanleyReciprocity}) attached to these polynomials. That is, we believe there exist yet-to-be-discovered combinatorial objects depending on $\pi$ such that $A_\pi(-p)$ is equal (up to sign) to the number of these objects with size $p$. The most well-known example in the literature of such a phenomenon concerns the {\em Ehrhart polynomial} $i_P(t)$ of a lattice polytope $P$, which counts the number of lattice points in $tP$ when $t$ is a positive integer: for such $t$,  Ehrhart reciprocity then tells us that $(-1)^{\dim P} i_P(-t)$ counts lattice points strictly in $tP$ (see~\cite{BeckRobins} for instance).

It is natural to wonder if our problem fits in the domain of Ehrhart theory, since most known examples of combinatorial reciprocity can be formulated in terms of Ehrhart polynomials: see for instance certain polynomials attached to graphs~\cite{BeckZasnowherezero, BreuerSanyal}. It cannot be a straightforward application however, in the sense $A_\pi(t)$ is not equal to an Ehrhart polynomial $i_P(t)$ in general: indeed, for any lattice polytope $P$ there cannot be two positive integers $i,j$ such that $i_P(-i)i_P(-j)<0$ since such values are either $0$ or of the sign $(-1)^{\dim P}$ by Ehrhart reciprocity. But for $\pi=()()()()=()^4$ for instance, one computes from the explicit expression given in Appendix~\ref{app:examples} that $A_\pi(-2)=-1$ while $A_\pi(-4)=9$. Moreover, one can also show that if Conjecture~\ref{conj:realroots} holds, then given any finite set $S$ of negative integers (included say in $\{-1,\ldots,1-n\}$) there exists a matching $\pi$ of size $n$ such that the set of negative roots of $A_\pi(t)$ is precisely $S$. This is  clearly a behaviour contrasting with Ehrhart polynomials, and even their generalizations to {\em inside-out polytopes} \cite{BeckZasInsideout}.

Conjectures~\ref{conj:realroots} and ~\ref{conj:dec} tell us in particular for which values of $p$ objects counted by $|A_\pi(-p)|$ should exist, and moreover that such objects should {\em split} for certain values of $p$. As pointed out in Section~\ref{sub:gpi}, Conjectures~\ref{conj:dec} and ~\ref{conj:gpi} make it particularly important to figure out what the numbers $G_\pi=A_\pi(-|\pi|)$ count.

\subsection{Consequences of the conjectures}
The conjectures have interesting consequences regarding the numbers  $a_\si^\pi$ involved  in Equation~\eqref{eq:apiX}, since for instance Conjecture~\ref{conj:realroots} directly implies certain linear relations among these numbers. Discovering what these numbers $a_\si^\pi$ are is a step in the direction of a new proof of the Razumov--Stroganov conjecture, in the sense that it gives an expression for $A_\pi$ that could be compared to the expressions for $\Psi_\pi$. We note also that a conjectural expression for these numbers $a_\si^\pi$ was given in ~\cite{ZJtriangle}, which if true would in fact give another proof of the Razumov--Stroganov conjecture; a special case of this expression is proven in~\cite{NadFPL2}.

\appendix


\section{Equivalence of the definitions of root multiplicities}
\label{app:equivab}

We will give here a proof of Theorem~\ref{th:equivab}, which states the integers $\ma_i(\pi)$ and $\mb_i(\pi)$ defined in Section~\ref{sub:combdef} are equal for any matching $\pi$ and any integer $i$ with $1\leq i \leq |\pi|-1$. 

Let $\pi$ be a matching with $n$ arches; we will prove the theorem by induction on $d(\pi)$. The theorem holds if $d(\pi)=0$; indeed this means that $\pi=()_n$, and clearly that $\ma_i(\pi)=\mb_i(\pi)=0$ for all $i$ in this case.
\medskip

We now assume $d(\pi)>0$. Let $\pi'$ be the matching obtained when the external rim of $\pi$ is removed. If $\pi$ is represented as a parenthesis word, then $\pi'$ is simply obtained by replacing the leftmost closing parenthesis of $\pi$ by an opening parenthesis, and the rightmost opening parenthesis by a closing one. Let $i,j,k$ be the indices defined in Rule B. Then in the parenthesis word representing $\pi$, the indices of the two parentheses above are respectively $i+1$ and $\widehat{\jmath}-1$. More precisely, $\pi$ admits the unique factorization:
\begin{equation}
\label{eq:decomp}
\pi=\left(^i)x_1)x_2)\cdots x_{i-k})w(y_{j-k}(\cdots (y_2(y_1(\right)^j,
\end{equation}
where $x_t$, $y_t$ and $w$ are (possibly empty) parenthesis words. We let $a_0:=i+1<a_1<\ldots<a_{i-k}$ be the indices of the closing parentheses written above and $b_{j-k}<\ldots<b_1<b_0=\widehat{\jmath+1}$ be the indices of opening ones.

 Then by the factorization~\eqref{eq:decomp} the matching $\pi$ includes the arches:  
\begin{equation}
\label{eq:archespi}
(k,a_{i-k}),\ldots,(i-1,a_1),(i,i+1)\quad\text{and}\quad (b_{j-k},\widehat{k}),\ldots,(b_1,\widehat{\jmath-1}),(\widehat{\jmath+1},\widehat{\jmath}),
\end{equation}
and moreover these are exactly the arches which are modified when going from $\pi$ to $\pi'$; indeed, these are replaced in $\pi'$ by \begin{align*}
&(k,\widehat{k}),(k+1,\widehat{k+1}),\\
&(k+2,a_{i-k}),\ldots,(i,a_2),(i+1,a_1),\\
&(b_{j-k},\widehat{k-2}),\ldots,(b_2,\widehat{\jmath}),(b_1,\widehat{\jmath+1}).
\end{align*}

\begin{figure}[!ht]

\begin{center}
\includegraphics[page=1,width=0.8\textwidth]{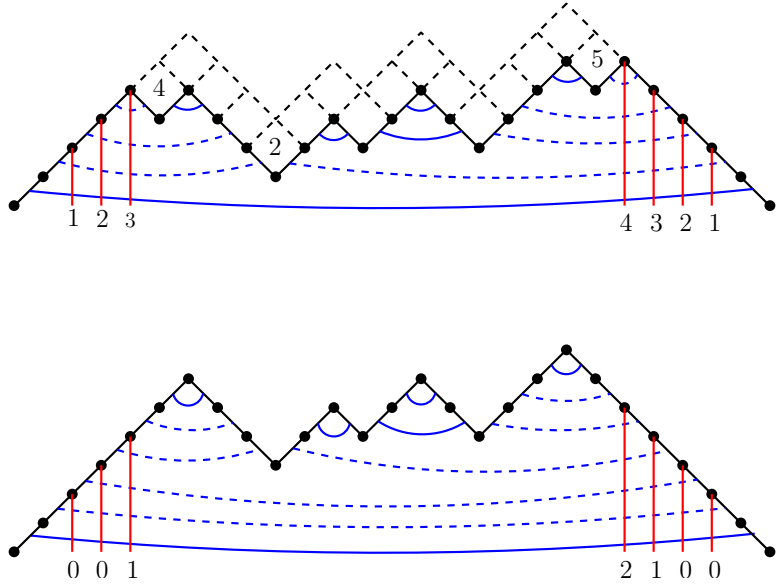}
\end{center}
\caption{In this example we have $i=4,j=5$ and $k=2$, therefore the multiset attached to rim by Rule B is $\{2,3^2,4^2,5\}$.
\label{fig:ruleAB}
}
\end{figure}

From this data we can now study the changes going from $\mathcal{A}_t^L(\pi),\mathcal{A}_t^R(\pi)$ to $\mathcal{A}_t^L(\pi'),\mathcal{A}_t^R(\pi')$ for any integer $t$ between $1$ and $n-1$. A case-by-case analysis shows that:
\[
 |\mathcal{A}_t^L(\pi)|=|\mathcal{A}_t^L(\pi')|+\delta_t,\text{ with } \delta_t=\begin{cases}
1\text{ if } t=k;\\
 2 \text{ if } k<t\leq i;\\
0 \text{ otherwise },                                                                             
\end{cases}
\]

and, symmetrically: 
\[
 |\mathcal{A}_t^R(\pi)|=|\mathcal{A}_t^R(\pi')|+\epsilon_t,\text{ with } \epsilon_t=\begin{cases}
1\text{ if } t=k;\\
2 \text{ if } k<t\leq j;\\
0 \text{ otherwise }.                                                                             
\end{cases}
\]

 By definition  $\ma_t(\pi)-\ma_t(\pi')=(\epsilon_t+\delta_t)/2$. From the explicit values above, this can be equivalently expressed by the fact that the multiset difference between $\{1^{\ma_1(\pi)}2^{\ma_2(\pi)}\ldots\}$ and $\{1^{\ma_1(\pi')}2^{\ma_2(\pi')}\ldots\}$ is:
 \[
  \{k,i,i-1,\ldots,k+1,j,j-1,\ldots k+1\}.
 \]

But this is exactly the multiset associated to the rim of $\pi$ in Rule B, so Theorem~\ref{th:equivab} is proved by induction.


\section{Examples}
\label{app:examples}

We computed the $A_{\pi}(t)$ for all matchings $\pi$ such that $|\pi|\leq 8$. Here is a list of all polynomials for $|\pi|=4$; note that if $\pi\neq\pi^*$ we listed just one of the two since the two polynomials are equal (cf. Proposition~\ref{prop:polynomials}).
\begin{align*}
 A_{\begin{tikzpicture}[scale=0.1]
       \draw[arche] (0,0) .. controls (0,3.5) and (7,3.5) .. (7,0);
       \draw[arche] (1,0) .. controls (1,2.5) and (6,2.5) .. (6,0);
       \draw[arche] (2,0) .. controls (2,1.5) and (5,1.5) .. (5,0);
       \draw[arche] (3,0) .. controls (3,0.5) and (4,0.5) .. (4,0);
    \end{tikzpicture}}
 (t)&=1 \\
 A_{\begin{tikzpicture}[scale=0.1]
       \draw[arche] (0,0) .. controls (0,3.5) and (7,3.5) .. (7,0);
       \draw[arche] (1,0) .. controls (1,2.5) and (6,2.5) .. (6,0);
       \draw[arche] (2,0) .. controls (2,0.5) and (3,0.5) .. (3,0);
       \draw[arche] (4,0) .. controls (4,0.5) and (5,0.5) .. (5,0);
    \end{tikzpicture}}
 (t)&=t+3 \\
 A_{\begin{tikzpicture}[scale=0.1]
       \draw[arche] (0,0) .. controls (0,3.5) and (7,3.5) .. (7,0);
       \draw[arche] (1,0) .. controls (1,1.5) and (4,1.5) .. (4,0);
       \draw[arche] (2,0) .. controls (2,0.5) and (3,0.5) .. (3,0);
       \draw[arche] (5,0) .. controls (5,0.5) and (6,0.5) .. (6,0);
    \end{tikzpicture}}
 (t)&=\frac{1}{2}(t+2)(t+3) \\
 A_{\begin{tikzpicture}[scale=0.1]
       \draw[arche] (0,0) .. controls (0,2.5) and (5,2.5) .. (5,0);
       \draw[arche] (1,0) .. controls (1,1.5) and (4,1.5) .. (4,0);
       \draw[arche] (2,0) .. controls (2,0.5) and (3,0.5) .. (3,0);
       \draw[arche] (6,0) .. controls (6,0.5) and (7,0.5) .. (7,0);
    \end{tikzpicture}}
 (t)&=\frac{1}{6}(t+1)(t+2)(t+3) \\
 A_{\begin{tikzpicture}[scale=0.1]
       \draw[arche] (0,0) .. controls (0,3.5) and (7,3.5) .. (7,0);
       \draw[arche] (1,0) .. controls (1,0.5) and (2,0.5) .. (2,0);
       \draw[arche] (3,0) .. controls (3,0.5) and (4,0.5) .. (4,0);
       \draw[arche] (5,0) .. controls (5,0.5) and (6,0.5) .. (6,0);
    \end{tikzpicture}}
 (t)&=\frac{1}{6}(t+2)(2t^2+11t+21) \\
 A_{\begin{tikzpicture}[scale=0.1]
       \draw[arche] (0,0) .. controls (0,2.5) and (5,2.5) .. (5,0);
       \draw[arche] (1,0) .. controls (1,0.5) and (2,0.5) .. (2,0);
       \draw[arche] (3,0) .. controls (3,0.5) and (4,0.5) .. (4,0);
       \draw[arche] (6,0) .. controls (6,0.5) and (7,0.5) .. (7,0);
    \end{tikzpicture}}
 (t)&=\frac{1}{24}(t+1)(t+2)(3t^2+17t+36) \\
 A_{\begin{tikzpicture}[scale=0.1]
       \draw[arche] (0,0) .. controls (0,1.5) and (3,1.5) .. (3,0);
       \draw[arche] (1,0) .. controls (1,0.5) and (2,0.5) .. (2,0);
       \draw[arche] (4,0) .. controls (4,1.5) and (7,1.5) .. (7,0);
       \draw[arche] (5,0) .. controls (5,0.5) and (6,0.5) .. (6,0);
    \end{tikzpicture}}
 (t)&=\frac{1}{12}(t+1)(t+2)^2(t+3) \\
 A_{\begin{tikzpicture}[scale=0.1]
       \draw[arche] (0,0) .. controls (0,1.5) and (3,1.5) .. (3,0);
       \draw[arche] (1,0) .. controls (1,0.5) and (2,0.5) .. (2,0);
       \draw[arche] (4,0) .. controls (4,0.5) and (5,0.5) .. (5,0);
       \draw[arche] (6,0) .. controls (6,0.5) and (7,0.5) .. (7,0);
    \end{tikzpicture}}
 (t)&=\frac{1}{24}(t+1)(t+2)(t+3)(t^2+4t+12) \\
 A_{\begin{tikzpicture}[scale=0.1]
       \draw[arche] (0,0) .. controls (0,0.5) and (1,0.5) .. (1,0);
       \draw[arche] (2,0) .. controls (2,1.5) and (5,1.5) .. (5,0);
       \draw[arche] (3,0) .. controls (3,0.5) and (4,0.5) .. (4,0);
       \draw[arche] (6,0) .. controls (6,0.5) and (7,0.5) .. (7,0);
    \end{tikzpicture}}
 (t)&=\frac{1}{60}(t+1)(3t^4+27t^3+108t^2+192t+180) \\
 A_{\begin{tikzpicture}[scale=0.1]
       \draw[arche] (0,0) .. controls (0,0.5) and (1,0.5) .. (1,0);
       \draw[arche] (2,0) .. controls (2,0.5) and (3,0.5) .. (3,0);
       \draw[arche] (4,0) .. controls (4,0.5) and (5,0.5) .. (5,0);
       \draw[arche] (6,0) .. controls (6,0.5) and (7,0.5) .. (7,0);
    \end{tikzpicture}}
 (t)&=\frac{1}{180}(t+1)(t+3)(4t^4+32t^3+155t^2+334t+420) \\
\end{align*}

From this list, we can compute the corresponding $G_\pi:=A_\pi(-4)$ (see Section~\ref{sub:gpi}). Here we index them with $Y(\pi)$ instead of $\pi$: this is well defined by the stability property $G_\pi=G_{(\pi)}$. 

\begin{align*}
G_{\begin{tikzpicture}[scale=.1,baseline=0pt]
\draw [black] (0,0) circle (1pt);
\end{tikzpicture}}&=1 &
G_{\begin{tikzpicture}[scale=.1,baseline=0pt]
\draw [black] (0,0) -- (1,0);
\draw [black] (0,0) -- (0,-1);
\draw [black] (1,0) -- (1,-1);
\draw [black] (0,-1) -- (1,-1);
\end{tikzpicture}}&=-1 &
G_{\begin{tikzpicture}[scale=.1,baseline=0pt]
\draw [black] (0,0) -- (2,0);
\draw [black] (0,0) -- (0,-1);
\draw [black] (1,0) -- (1,-1);
\draw [black] (0,-1) -- (2,-1);
\draw [black] (2,0) -- (2,-1);
\end{tikzpicture}}&=1 &
G_{\begin{tikzpicture}[scale=.1,baseline=0pt]
\draw [black] (0,0) -- (2,0);
\draw [black] (0,0) -- (0,-2);
\draw [black] (1,0) -- (1,-2);
\draw [black] (0,-1) -- (2,-1);
\draw [black] (2,0) -- (2,-1);
\draw [black] (0,-2) -- (1,-2);
\end{tikzpicture}}&=-3\\
G_{\begin{tikzpicture}[scale=.1,baseline=0pt]
\draw [black] (0,0) -- (3,0);
\draw [black] (0,-1) -- (3,-1);
\draw [black] (0,0) -- (0,-1);
\draw [black] (1,0) -- (1,-1);
\draw [black] (2,0) -- (2,-1);
\draw [black] (3,0) -- (3,-1);
\end{tikzpicture}}&=-1 &
G_{\begin{tikzpicture}[scale=.1,baseline=0pt]
\draw [black] (0,0) -- (2,0);
\draw [black] (0,-1) -- (2,-1);
\draw [black] (0,-2) -- (2,-2);
\draw [black] (0,0) -- (0,-2);
\draw [black] (1,0) -- (1,-2);
\draw [black] (2,0) -- (2,-2);
\end{tikzpicture}}&=1 &
G_{\begin{tikzpicture}[scale=.1,baseline=0pt]
\draw [black] (0,0) -- (3,0);
\draw [black] (0,0) -- (0,-2);
\draw [black] (1,0) -- (1,-2);
\draw [black] (0,-1) -- (3,-1);
\draw [black] (2,0) -- (2,-1);
\draw [black] (3,0) -- (3,-1);
\draw [black] (0,-2) -- (1,-2);
\end{tikzpicture}}&=4 &
G_{\begin{tikzpicture}[scale=.1,baseline=0pt]
\draw [black] (0,0) -- (3,0);
\draw [black] (0,-1) -- (3,-1);
\draw [black] (0,-2) -- (1,-2);
\draw [black] (0,-3) -- (1,-3);
\draw [black] (0,0) -- (0,-3);
\draw [black] (1,0) -- (1,-3);
\draw [black] (2,0) -- (2,-1);
\draw [black] (3,0) -- (3,-1);
\end{tikzpicture}}&=-9\\
G_{\begin{tikzpicture}[scale=.1,baseline=0pt]
\draw [black] (0,0) -- (3,0);
\draw [black] (0,-1) -- (3,-1);
\draw [black] (0,-2) -- (2,-2);
\draw [black] (0,0) -- (0,-2);
\draw [black] (1,0) -- (1,-2);
\draw [black] (2,0) -- (2,-2);
\draw [black] (3,0) -- (3,-1);
\end{tikzpicture}}&=-3 &
G_{\begin{tikzpicture}[scale=.1,baseline=0pt]
\draw [black] (0,0) -- (3,0);
\draw [black] (0,-1) -- (3,-1);
\draw [black] (0,-2) -- (2,-2);
\draw [black] (0,-3) -- (1,-3);
\draw [black] (0,0) -- (0,-3);
\draw [black] (1,0) -- (1,-3);
\draw [black] (2,0) -- (2,-2);
\draw [black] (3,0) -- (3,-1);
\end{tikzpicture}}&=9
\end{align*}

Finally, here are the $G_\pi(\tau)$ for $|\pi|=4$, as defined in Section~\ref{sec:tau}:

\begin{align*}
G_{\begin{tikzpicture}[scale=.1,baseline=0pt]
\draw [black] (0,0) circle (1pt);
\end{tikzpicture}}&=1 &
G_{\begin{tikzpicture}[scale=.1,baseline=0pt]
\draw [black] (0,0) -- (1,0);
\draw [black] (0,0) -- (0,-1);
\draw [black] (1,0) -- (1,-1);
\draw [black] (0,-1) -- (1,-1);
\end{tikzpicture}}&=-\tau &
G_{\begin{tikzpicture}[scale=.1,baseline=0pt]
\draw [black] (0,0) -- (2,0);
\draw [black] (0,0) -- (0,-1);
\draw [black] (1,0) -- (1,-1);
\draw [black] (0,-1) -- (2,-1);
\draw [black] (2,0) -- (2,-1);
\end{tikzpicture}}&=\tau^2 &
G_{\begin{tikzpicture}[scale=.1,baseline=0pt]
\draw [black] (0,0) -- (0,2);
\draw [black] (0,0) -- (-1,0);
\draw [black] (0,1) -- (-1,1);
\draw [black] (-1,0) -- (-1,2);
\draw [black] (0,2) -- (-1,2);
\end{tikzpicture}}&=\tau^2 \\
G_{\begin{tikzpicture}[scale=.1,baseline=0pt]
\draw [black] (0,0) -- (2,0);
\draw [black] (0,0) -- (0,-2);
\draw [black] (1,0) -- (1,-2);
\draw [black] (0,-1) -- (2,-1);
\draw [black] (2,0) -- (2,-1);
\draw [black] (0,-2) -- (1,-2);
\end{tikzpicture}}&=-2\tau-\tau^3&
G_{\begin{tikzpicture}[scale=.1,baseline=0pt]
\draw [black] (0,0) -- (3,0);
\draw [black] (0,-1) -- (3,-1);
\draw [black] (0,0) -- (0,-1);
\draw [black] (1,0) -- (1,-1);
\draw [black] (2,0) -- (2,-1);
\draw [black] (3,0) -- (3,-1);
\end{tikzpicture}}&=-\tau^3 &
G_{\begin{tikzpicture}[scale=.1,baseline=0pt]
\draw [black] (0,0) -- (1,0);
\draw [black] (0,-1) -- (1,-1);
\draw [black] (0,-2) -- (1,-2);
\draw [black] (0,-3) -- (1,-3);
\draw [black] (0,0) -- (0,-3);
\draw [black] (1,0) -- (1,-3);
\end{tikzpicture}}&=-\tau^3 &
G_{\begin{tikzpicture}[scale=.1,baseline=0pt]
\draw [black] (0,0) -- (2,0);
\draw [black] (0,-1) -- (2,-1);
\draw [black] (0,-2) -- (2,-2);
\draw [black] (0,0) -- (0,-2);
\draw [black] (1,0) -- (1,-2);
\draw [black] (2,0) -- (2,-2);
\end{tikzpicture}}&=\tau^4 \\ 
G_{\begin{tikzpicture}[scale=.1,baseline=0pt]
\draw [black] (0,0) -- (3,0);
\draw [black] (0,0) -- (0,-2);
\draw [black] (1,0) -- (1,-2);
\draw [black] (0,-1) -- (3,-1);
\draw [black] (2,0) -- (2,-1);
\draw [black] (3,0) -- (3,-1);
\draw [black] (0,-2) -- (1,-2);
\end{tikzpicture}}&=3\tau^2+\tau^4 &
G_{\begin{tikzpicture}[scale=.1,baseline=0pt]
\draw [black] (0,0) -- (2,0);
\draw [black] (0,0) -- (0,-3);
\draw [black] (1,0) -- (1,-3);
\draw [black] (0,-1) -- (2,-1);
\draw [black] (2,0) -- (2,-1);
\draw [black] (0,-3) -- (1,-3);
\draw [black] (0,-2) -- (1,-2);
\end{tikzpicture}}&=3\tau^2+\tau^4 &
G_{\begin{tikzpicture}[scale=.1,baseline=0pt]
\draw [black] (0,0) -- (3,0);
\draw [black] (0,-1) -- (3,-1);
\draw [black] (0,-2) -- (1,-2);
\draw [black] (0,-3) -- (1,-3);
\draw [black] (0,0) -- (0,-3);
\draw [black] (1,0) -- (1,-3);
\draw [black] (2,0) -- (2,-1);
\draw [black] (3,0) -- (3,-1);
\end{tikzpicture}}&=-3\tau-5\tau^3-\tau^5&
G_{\begin{tikzpicture}[scale=.1,baseline=0pt]
\draw [black] (0,0) -- (3,0);
\draw [black] (0,-1) -- (3,-1);
\draw [black] (0,-2) -- (2,-2);
\draw [black] (0,0) -- (0,-2);
\draw [black] (1,0) -- (1,-2);
\draw [black] (2,0) -- (2,-2);
\draw [black] (3,0) -- (3,-1);
\end{tikzpicture}}&=-2\tau^3-\tau^5 \\
G_{\begin{tikzpicture}[scale=.1,baseline=0pt]
\draw [black] (0,0) -- (2,0);
\draw [black] (0,-1) -- (2,-1);
\draw [black] (0,-2) -- (2,-2);
\draw [black] (0,0) -- (0,-3);
\draw [black] (1,0) -- (1,-3);
\draw [black] (2,0) -- (2,-2);
\draw [black] (0,-3) -- (1,-3);
\end{tikzpicture}}&=-2\tau^3-\tau^5 &
G_{\begin{tikzpicture}[scale=.1,baseline=0pt]
\draw [black] (0,0) -- (3,0);
\draw [black] (0,-1) -- (3,-1);
\draw [black] (0,-2) -- (2,-2);
\draw [black] (0,-3) -- (1,-3);
\draw [black] (0,0) -- (0,-3);
\draw [black] (1,0) -- (1,-3);
\draw [black] (2,0) -- (2,-2);
\draw [black] (3,0) -- (3,-1);
\end{tikzpicture}}&=3\tau^2+5\tau^4+\tau^6
\end{align*}

\bibliography{DN}
\bibliographystyle{amsplainhyper}
\end{document}